\let\frak\mathfrak
\let\Bbb\mathbb
\def\>{\relax\ifmmode\mskip.666667\thinmuskip\relax\else\kern.111111em\fi}
\def\<{\relax\ifmmode\mskip-.333333\thinmuskip\relax\else\kern-.0555556em\fi}
\def\vsk#1>{\vskip#1\baselineskip}
\def\vv#1>{\vadjust{\vsk#1>}\ignorespaces}
\def\vvn#1>{\vadjust{\nobreak\vsk#1>\nobreak}\ignorespaces}
  \let\ssize\scriptstyle
\let\sssize\scriptscriptstyle
\let\Medskip\medskip
\def\medskip{\par\Medskip}
\let\Bigskip\bigskip
\def\bigskip{\par\Bigskip}
\let\Maketitle\maketitle
\def\maketitle{\Maketitle\thispagestyle{empty}\let\maketitle\empty}
\newtheorem{thm}{Theorem}[section]
\newtheorem{cor}[thm]{Corollary}
\newtheorem{lem}[thm]{Lemma}
\newtheorem{prop}[thm]{Proposition}
\theoremstyle{definition}                                  
\newtheorem{exmp}{Example}[section]
\numberwithin{equation}{section}
\theoremstyle{definition}
\newtheorem*{rem}{Remark}
\let\mc\mathcal
\let\nc\newcommand
\let\al\alpha
\let\la\lambda
\let\phi\varphi
\let\Si\Sigma
\let\der\partial
\let\geq\geqslant
\let\leq\leqslant
\let\on\operatorname
\let\bi\bibitem
\let\bs\boldsymbol
\def\C{{\mathbb C}}
\def\Z{{\mathbb Z}}
\def\R{{\mathbb R}}
\def\F{{\mathbb F}}
\def\+#1{^{\{#1\}}}
\def\id{\on{id}}
\def\Wr{\on{Wr}}
\def\beq{\begin{equation}}
\def\eeq{\end{equation}}
\def\be{\begin{equation*}}
\def\ee{\end{equation*}}
\nc{\bea}{\begin{eqnarray*}}
\nc{\eea}{\end{eqnarray*}}
\nc{\bean}{\begin{eqnarray}}
\nc{\eean}{\end{eqnarray}}
\nc{\Ref}[1]{{\rm(\ref{#1})}}
\let\ga\gamma
\nc{\Il}{{\mc I_{\bs\la}}}
\nc{\bla}{{\bs\la}}
\nc{\Fla}{\F_\bla}
\nc{\tfl}{{T^*\Fla}}
\nc{\GL}{{GL_n(\C)}}
\nc{\GLC}{{GL_n(\C)\times\C^*}}
\let\sd s 
\def\ddk_#1{\kk_{#1}\<\>\frac\der{\der\<\>\kk_{#1}}}
\def\bul{\mathbin{\raise.2ex\hbox{$\sssize\bullet$}}}
\def\intt{\mathchoice
{\mathop{\raise.2ex\rlap{$\,\,\ssize\backslash$}{\intop}}\nolimits}
{\mathop{\raise.3ex\rlap{$\,\sssize\backslash$}{\intop}}\nolimits}
{\mathop{\raise.1ex\rlap{$\sssize\>\backslash$}{\intop}}\nolimits}
{\mathop{\rlap{$\sssize\<\>\backslash$}{\intop}}\nolimits}}
\let\kk q 
\let\cc c
\let\Ko K
\def\GZ/{Gelfand-Zetlin}
\def\KZ/{{\slshape KZ\/}}
\def\qKZ/{{\slshape qKZ\/}}
\def\XXX/{{\slshape XXX\/}}
\nc{\A}{{\mc C}}
\theoremstyle{definition}
\newcommand{\iso}{\overset{\sim}{\longrightarrow}}
\newcommand{\lra}{\longrightarrow}
\newcommand{\dpar}{\partial}
\newcommand{\ty}{\tilde y}
\newcommand{\tty}{\hat{y}}
\newcommand{\tCY}{\tilde{\mathcal{Y}}}
\newcommand{\fgl}{\mathfrak{gl}}
\newcommand{\fW}{W}
\newcommand{\ba}{\bs{a}}
\newcommand{\bb}{\bs{b}}
\newcommand{\bc}{\bs{c}}
\newcommand{\bh}{{\bs{h}}}
\newcommand{\bk}{{\bs{k}}}
\newcommand{\bL}{\bs{L}}
\newcommand{\bt}{\bs{t}}
\newcommand{\bu}{\bs{u}}
\newcommand{\bv}{\bs{v}}
\newcommand{\bx}{\bs{x}}
\newcommand{\by}{\bs{y}}
\newcommand{\bz}{\bs{z}}
\newcommand{\Red}{\text{Red}}
\newcommand{\CB}{\mathcal{B}}
\newcommand{\CH}{\mathcal{H}}
\newcommand{\CL}{\mathcal{L}}
\newcommand{\CN}{\mathcal{N}}
\newcommand{\CP}{\mathcal{P}}
\newcommand{\CS}{\mathcal{S}}
\newcommand{\CY}{\mathcal{Y}}
\newcommand{\CZ}{\mathcal{Z}}
\newcommand{\BA}{\mathbb{A}}
\newcommand{\BC}{\mathbb{C}}
\newcommand{\BN}{\mathbb{N}}
\newcommand{\BP}{\mathbb{P}}
\newcommand{\BR}{\mathbb{R}}
\newcommand{\BZ}{\mathbb{Z}}
\let\der\partial
\nc{\Id}{\text{Id}}
\nc\Wd{{\Wr^\dagger_V}}
\nc\Wdz{{\Wr^\dagger_{V,z}}}
\begin{document}

\hrule width0pt
\vsk->

\title[Positive Populations]
{Positive Populations}

\author
[Vadim Schechtman and Alexander Varchenko]
{ Vadim Schechtman$\>^\circ$ and Alexander Varchenko$\>^\star$}

\maketitle

\begin{center}
{\it ${}^\circ$ Institut de Math\'ematiques de Toulouse\,--\,  Universit\'e Paul Sabatier\\ 118 Route de Narbonne,
31062 Toulouse, France \/}

\vsk.5>
{\it $^{\star}\<$Department of Mathematics, University
of North Carolina at Chapel Hill\\ Chapel Hill, NC 27599-3250, USA\/}

\vsk.5>
{\it $^{\star}\<$Faculty of Mathematics and Mechanics, Lomonosov Moscow State
University\\ Leninskiye Gory 1, 119991 Moscow GSP-1, Russia\/}

\end{center}

{\let\thefootnote\relax
\footnotetext{\vsk-.8>\noindent
$^\circ\<${\sl E\>-mail}:\enspace schechtman@math.ups-tlse.fr.
\\
$^\star\<${\sl E\>-mail}:\enspace anv@email.unc.edu\>,
supported in part by NSF grant DMS-1665239}}

\begin{abstract}
A positive structure on the varieties of critical points of  master functions
for KZ equations is introduced. It comes as a combination
of the ideas from  classical works by G.\,Lusztig and  a previous
work by E.\,Mukhin and the second named author. 

\end{abstract}

\bigskip

{\it Keywords}:\ Totally positive matrices,  Whitney-Lusztig charts, Wronskian differential 
\\
\phantom{aaaaaaaaaaa}  
equation, Wronski map, master functions, Bethe cells,
positive populations

\bigskip

{\it 2010 Mathematics Subject Classification}: 13F60 (14M15, 82B23)

\bigskip

{\small \tableofcontents  }

\setcounter{footnote}{0}
\renewcommand{\thefootnote}{\arabic{footnote}}

\section{Introduction: Whitney-Lusztig patterns and Bethe populations}

\subsection{Big cell and critical points}
\label{1.1}
The aim of the present note is to introduce a positive structure on varieties of 
critical points of master functions 
arising in the integral representation for solutions of   KZ equations.

\vsk.2>
Let $N = N_{r+1}\subset G = \on{SL}_{r+1}(\BC)$ denote the group of upper triangular matrices with 
$1$'s on the diagonal. It may also be considered as a big cell in the flag variety 
$\on{SL}_{r+1}(\BC)/B_-$, where $B_-$ is the subgroup of lower triangular matrices. 
Let $S_{r+1}$ denote the Weyl group of $G$, the symmetric group. 
\vsk.2>

In this note two objects  related to $N$ will be discussed: on the one hand, 
what we call here the {\it Whitney-Loewner-Lusztig data on $N$},
on the other hand, 
a construction, introduced in \cite{MV}, which we call here {\it the Wronskian evolution} 
along the varieties of critical points.

\vsk.2> In this note we consider only the case of the group $\on{SL}_{r+1}(\C)$, although
the other reductive groups can be considered similarly.

\subsection{What is done in  Introduction}

In Section \ref{1.2} we recall the classical objects:   {\it Whitney-Loewner charts},  these are 
collections of birational coordinate systems on $N$ indexed by reduced decompositions of 
the longest element $w_0\in S_{r+1}$, and {\it Lusztig transition maps} between them.

In Sections \ref{1.3} - \ref{1.5.3} the main ideas from \cite{MV} are introduced. 
Namely, it is  a {\it reproduction}  recipe, called here a {\it Wronskian evolution},  which produces 
varieties of critical points for {\it master functions} appearing in  integral representations for
solutions of KZ equations, \cite{SV}.      

In Section \ref{1.6} the content of Sections \ref{2} - \ref{8} is described.

\subsection{Whitney-Loewner  charts and Lusztig transition maps}
\label{1.2}

In the seminal papers \cite{L,BFZ}
 Lusztig and Berenstein-Fomin-Zelevinsky 
have performed a deep study of certain remarkable coordinate systems on $N$, i.e. 
morphisms of algebraic varieties
$$
\CL_\bh:\ \BC^{q} \lra N, 
$$
$q = r(r + 1)/2$, with dense image, which are birational isomorphisms. The main feature of these 
morphisms is that the restriction of them to $\BR_{>0}^q$ induces an isomorphism
$$
\CL_\bh:\ \BR_{>0}^q \iso N_{>0}, 
$$
where $N_{>0}$ is  the subspace of {\it totally positive} upper triangular matrices.

\vsk.2>
 Recall that 
a matrix $g\in N$ is called {\it totally positive} if all its minors are strictly positive, 
except for those who are identically zero on the whole group $N$, see
\cite{BFZ}\footnote{The notion of a totally positive matrix first appeared in the works of 
I.\,Schoenberg \cite{S} and Gantmacher-Krein \cite{GK}.}.

\vsk.2>
The set of such coordinate systems, which we will be calling the {\it Whitney-Loewner charts}, is in bijection with the set $\Red(w_0)$ of reduced decompositions 
\bean
\label{1.1.1}
\bh:\ w_0 = s_{i_q}\ldots s_{i_1}
\eean
of the longest element $w_0\in S_{r+1}$. 
\vsk.2>

For example, for $r = 2$ there are two such 
coordinate systems, $\CL_{121}$ and $\CL_{212}$ corresponding to the reduced 
words $s_1s_2s_1$ and $s_2s_1s_2$ respectively. 
\vsk.2>

The construction of maps $\CL_\bh$ will be recalled below, see Section \ref{4.1}.

\vsk.2>

To every $\bh\in\Red(w_0)$ and  
$\ba = (a_q, \ldots, a_1)\in \BC^q$ there corresponds a matrix 
$$
N_\bh(\ba) = \CL_\bh(\ba)\in N.
$$
A theorem of A.\,Whitney\footnote{Anne M. Whitney (1921--2008), 
a student of Isaac Schoenberg (1903--1990).}, 
as reformulated by Ch.\,Loewner, see \cite{W, Lo}, says;

\begin{thm}
\label{thm 1.1}
For every $\bh$ each matrix $A\in N_{>0}$  is of the form $N_\bh(\ba)$
for some $\ba\in \R_{>0}^q$.
\end{thm}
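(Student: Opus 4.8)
The plan is to make the parametrization explicit and then invert it by positive elimination. Recall (in the ordering fixed in Section~\ref{4.1}) that for a reduced word $\bh:\ w_0 = s_{i_q}\ldots s_{i_1}$ the chart is a product of elementary unipotent factors,
\[
\CL_\bh(\ba) \;=\; x_{i_q}(a_q)\,\cdots\,x_{i_1}(a_1), \qquad x_i(t) = I + t\,E_{i,i+1}.
\]
The inclusion $\CL_\bh(\R_{>0}^q)\subseteq N_{>0}$ is the easy half: such a product with positive parameters is the path matrix of a planar network with positive edge weights, so by the Lindstr\"om--Gessel--Viennot lemma every minor of $\CL_\bh(\ba)$ is a subtraction-free polynomial in $a_q,\ldots,a_1$, hence nonnegative; a minor that is not identically zero on $N$ receives a nonzero contribution precisely because $\bh$ is a \emph{reduced} word for the longest element $w_0$, and is therefore strictly positive. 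Thus only surjectivity onto $N_{>0}$ — which is exactly what the theorem asserts — remains.

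For surjectivity I would first reduce to a single reduced word. The Lusztig transition maps of Section~\ref{1.2} satisfy $\CL_{\bh'} = \CL_\bh\circ R$ for a map $R$ given by subtraction-free rational expressions; hence $R$ restricts to a bijection $\R_{>0}^q\iso\R_{>0}^q$. Consequently, if $\CL_{\bh_0}(\R_{>0}^q)=N_{>0}$ holds for one convenient word $\bh_0$, it holds for every $\bh$. I would take $\bh_0$ to be the standard staircase word and establish surjectivity for it by induction on $r$, peeling off one elementary factor at a time.

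The heart of the matter is this inversion step, and it is the only place where the positivity of $A$ is genuinely used. Given $A\in N_{>0}$, Neville (adjacent-row) elimination produces a factorization $A = x_{i_q}(a_q)\cdots x_{i_1}(a_1)$ in which each multiplier $a_j$ is a ratio of two minors of an intermediate matrix; the Berenstein--Fomin--Zelevinsky chamber ansatz rewrites these as ratios of products of minors of $A$ itself, i.e. as subtraction-free Laurent expressions in the strictly positive minors of $A$. The main obstacle is exactly to check that every multiplier so produced is strictly positive and that the elimination does not leave the totally positive stratum before terminating at the identity; granting this, $\ba = (a_q,\ldots,a_1)\in\R_{>0}^q$ and $\CL_{\bh_0}(\ba)=A$, which is the claim.

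If one prefers to bypass the combinatorial bookkeeping, the final deduction can be packaged topologically once injectivity on $\R_{>0}^q$ is in hand: $\CL_\bh$ is a birational isomorphism, so $\CL_\bh(\R_{>0}^q)$ is a nonempty open subset of the connected $q$-dimensional manifold $N_{>0}$; and since the chamber-ansatz formulas recover the parameters continuously from the (positive, finite) minors of the image, along any sequence $\CL_\bh(\ba^{(n)})\to A\in N_{>0}$ the points $\ba^{(n)}$ converge in $\R_{>0}^q$, so the image is also closed in $N_{>0}$. Being open, closed and nonempty in a connected space, it must be all of $N_{>0}$. Either way the decisive input is the strict positivity of the recovered parameters, which is the precise manifestation of the total positivity of $A$.
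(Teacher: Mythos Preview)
The paper does not prove this theorem: it is stated in the introduction as a classical result of Whitney (reformulated by Loewner) with references \cite{W,Lo}, and no argument is given. So there is no ``paper's own proof'' to compare your proposal against.

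As a self-standing sketch your outline is the standard modern route (Lindstr\"om--Gessel--Viennot for the forward inclusion; reduction to a single reduced word via the subtraction-free Lusztig transition maps; Neville elimination/BFZ chamber ansatz or an open--closed argument for surjectivity). Two remarks. First, the reduction step borrows from Lusztig's later work on the positivity of the transition maps $R_{\bh,\bh'}$; that is logically fine but anachronistic relative to the Whitney--Loewner source, which proceeds by a direct elimination in the totally positive setting. Second, you correctly isolate the crux --- that each successive multiplier in the Neville elimination is strictly positive and that the intermediate matrices remain in the appropriate totally positive stratum --- but you do not actually carry this out; both of your endgames (the direct inversion and the topological closedness argument) presuppose exactly this positivity of the recovered parameters on all of $N_{>0}$. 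So as written the proposal identifies the right ingredients but leaves the decisive verification as an assertion.
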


\vsk.2>

For any two words $\bh, \bh'$ Lusztig has defined a {\it birational} self-map of $\BA^q$, i.e. an automorphism of the field of rational functions 
$$
\F :=  \BC(\ba) = \BC(a_1, \ldots, a_q) \cong \BC(N)
$$
(here we consider $a_i$ as independent  
transcendental generators), 
\bean
\label{1.1.2}
R_{\bh, \bh'}:\ \F \iso  \F\,,
\eean
such that 
$$
\ba' = R_{\bh, \bh'}(\ba), \quad\on{if}\ \ 
N_\bh(\ba) = N_{\bh'}(\ba')\,. 
$$ 
For example
$$
N_{121}(a_1, a_2, a_3) = \left(\begin{matrix}
1 & a_1 + a_3 & a_1a_2\\ 0 & 1 & a_2\\ 0 & 0 & 1
\end{matrix}\right) = e_1(a_1)e_2(a_2)e_1(a_3)
$$
and
$$
N_{212}(a'_1, a'_2, a'_3) = \left(\begin{matrix}
1 & a'_2 & a'_2a'_3\\ 0 & 1 & a'_1 + a'_3\\ 0 & 0 & 1
\end{matrix}\right) = e_2(a'_1)e_1(a'_2)e'_2(a'_3)\,,
$$
where $e_1(a) = 1 + ae_{12},\ e_2(a) = 1 + ae_{23}$. 

It follows that 
$$
N_{121}(a_1, a_2, a_3) = N_{212}(a'_1, a'_2, a'_3)\,,
$$
provided
\bean
\label{1.1.2a}
a'_1 = \frac{a_2a_3}{a_1 + a_3}\,,\quad a'_2 = a_1 + a_3\,,\quad a'_3 = \frac{a_2a_1}{a_1 + a_3}\,. 
\eean
This is equivalent to 
\bean
\label{1.1.2b}
a_1 = \frac{a_2\rq{}a_3\rq{}}{a_1\rq{} + a_3\rq{}}\,,
\quad 
a_2 = a_1\rq{} + a_3\rq{}\,,
\quad a_3 = \frac{a_2\rq{}a_1\rq{}}{a_1\rq{} + a_3\rq{}}\,. 
\eean
The transformation \Ref{1.1.2a} is {\it involutive}, that is, its square is 
the identity.

\subsection{Bethe Ansatz equations}
\label{1.3}

On the other hand, in the work \cite{MV} it was discovered that the variety $N$ is closely 
connected with  the varieties of critical points of certain {\it master functions} $\Phi_\bk$.  

Namely, for a sequence 
$$
\bk = (k_1, \ldots, k_r)\in \BN^r\,,
$$
consider a function $\Phi_\bk(\bu)$ depending on $k := \sum_{i=1}^r k_i$ variables subdivided 
into $r$ groups: 
$$
\bu = (u_1^{(1)}, \ldots, u^{(1)}_{k_1}; \ldots ;u^{(r)}_1, \ldots, u^{(r)}_{k_r})\,.
$$
By definition, 
$$
\Phi_\bk(\bu) = \prod_{i = 1}^r \prod_{1\leq m < l \leq k_i} 
(u_m^{(i)} - u_l^{(i)})^{a_{ii}}\cdot 
\prod_{1\leq i < j\leq r}\prod_{m=1}^{k_i}\prod_{l=1}^{k_j} (u_m^{(i)} - u_l^{(j)})^{a_{ij}}  \,.
$$
Here $A = (a_{ij})$ is the Cartan matrix for the root system of type $A_r$, in other words, 
$$
\Phi_\bk(\bu) = \prod_{i = 1}^r \prod_{1\leq l < m \leq k_i} 
(u_l^{(i)} - u_m^{(i)})^{2}\cdot 
\prod_{i=1}^{r-1}\prod_{l=1}^{k_i}
\prod_{m=1}^{k_{i+1}} (u_l^{(i)} - u_m^{(i+1)})^{-1}  \,.  
$$

Functions of this kind first appeared in \cite{SV} in  the study of integral representations for solutions  of  KZ differential equations.

A point 
$$
\bt = (t_1^{(1)}, \ldots, t^{(1)}_{k_1}; \ldots ;t^{(r)}, \ldots, t^{(r)}_{k_r})
$$
is {\it critical} for the function $\log\Phi_\bk(\bu)$ if it satisfies the system of $k$ equations 
$$
\frac{\dpar\log\Phi_\bk}{\dpar u^{(i)}_m}(\bt) = 
\biggl[\frac{\dpar \Phi_\bk}{\dpar u^{(i)}_m}\Phi_\bk^{-1}\biggr](\bt) = 0\,,
\qquad
1\leq i \leq r,\ 1\leq m \leq k_i\,,
$$
or, equivalently, 
\bean
\label{1.1.2B}
\sum_{l\neq m} \frac{a_{ii}}{t^{(i)}_m - t^{(i)}_l} + 
\sum_{j\neq i}\sum_{l = 1}^{k_j} \frac{a_{ij}}{t^{(i)}_m - t^{(j)}_l} = 0\,,
\qquad
1\leq i \leq r,\ 1\leq m \leq k_i\,.
\eean
 This system of critical point equations is also called the system of 
{\it Bethe Ansatz equations} in the Gaudin model. 

\subsection{Reproduction, or Wronskian evolution (bootstrap)} 
\label{1.4}

The following procedure of {\it reproduction} for constructing critical points 
has been proposed in \cite{ScV}, \cite{MV}. 

\vsk.2>

Let us identify the group  $\BZ^r$ with the root lattice $Q$ of the group $G$ using the base of standard simple roots $\alpha_1, \ldots, \alpha_r$.
Introduce the usual shifted action of $W = S_{r + 1}$ on $Q$:
$$
w* v = w(v - \rho) + \rho\,,
$$
where $\rho$ is the half-sum of positive roots.

Let $w\in S_{r+1}$ and let
\bean
\label{1.4.1}
\bh\ :\  w= s_{i_m}\dots s_{i_1}
\eean
be a reduced decomposition of $w$.  For any $0 \leq j \leq q$ we define an $r$-tuple 
$$
\bk^{(j)} = s_{i_j}\ldots s_{i_1}*(\mathbf{0}) \in \BN^r\,,
$$
where $\mathbf{0} = (0, \ldots, 0) = \bk^{(0)}$.  

\vsk.2>

Starting from the $r$-tuple of polynomials 
$$
\by^{(0)}=(1,\dots,1) \in \BC[x]^r\,,
$$ 
one defines inductively 
a sequence of $r$-tuples of polynomials
\bean
\label{1.4.2}
\by_\bh = (\by^{(0)}, \by^{(1)}(v_1), \by^{(2)}(v_1, v_2), \ldots, \by^{(m)}(v_1, \ldots , v_m))\,,
\eean
where 
$$
\by^{(j)}(\bv) = (y^{(j)}_1(\bv; x), \ldots, y^{(j)}_r(\bv; x))\in \BC[v_1, \ldots, v_j; x]^r,
$$
$0 \leq j \leq m$, with 
$$
\deg \by^{(j)}(\bv) := (\deg y^{(j)}_1(\bv; x), \ldots, \deg y^{(j)}_r(\bv; x)) = \bk^{(j)}\,,
$$
where $\deg$ is the degree with respect to $x$. 

\vsk.2>
The sequence $(1.4.2)$ is called the {\it population associated with a reduced word $\bh$}.  

\vsk.2>
We consider a polynomial $y^{(j)}_i(\bv; x)$ as a family $y^{(j)}_i(\bc; x)$ of polynomials  of one variable 
$x$ depending on a parameter $\bc = \bc^{(j)} = (c_1, \ldots, c_j)\in \BC^j$.  

\vsk.2>
Let $\bc\in \C^j$ and  $t_{i,1}, \ldots, t_{i, k^{(j)}_i}$ be the roots of $ y^{(j)}_i(\bc;x)$ 
ordered in any way.  Consider the tuple
$$
\bt^{(j)}(\bc) = (t_{1,1}, \ldots, t_{1, k^{(j)}_1}; \ldots ;t_{r,1}, \ldots, t_{r, k^{(j)}_r})\,.
$$

The main property of the sequence $y_\bh$ is:

\begin{thm} [\cite{MV}]
\label{1.4.1}
 For each $j=1,\dots,m$, there exists a Zarisky open dense subspace  
$U \subset \BC^j$ such that for every   
$\bc = \bc^{(j)} \in U$, the tuple of roots $\bt^{(j)}(\bc)$
is a critical point of the master function $\Phi^{(j)}(\bu) := \Phi_{\bk^{(j)}}(\bu)$, i.e. it satisfies the Bethe Ansatz equations \Ref{1.1.2B}. 

Moreover, if $\Phi_\bk$ is a master function for some index $\bk$ and $\bt$ a critical point of $\log \Phi_\bk$ 
as in Section \ref{1.3}, 
then $\bt$ appears in this construction for a reduced decomposition $\bh$ of some element $w\in S_{r+1}$.
\end{thm}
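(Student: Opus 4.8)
The plan is to reformulate the Bethe Ansatz equations \Ref{1.1.2B} as a polynomial Wronskian condition, to prove the first (existence) assertion by induction along the reduced word by means of a single reproduction step, and to prove the second (completeness) assertion by attaching to an arbitrary critical point a complete flag in one fixed space of polynomials and reading off the corresponding Bruhat cell.

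First I would encode a critical point by the tuple $\by=(y_1,\dots,y_r)$ of monic polynomials $y_i(x)=\prod_m\bigl(x-t^{(i)}_m\bigr)$, with the convention $y_0=y_{r+1}=1$, so that $y_i'/y_i=\sum_m(x-t^{(i)}_m)^{-1}$. A residue computation at the roots of $y_i$ then shows that $\bt$ satisfies \Ref{1.1.2B} if and only if, for every $i$, the rational function $y_{i-1}y_{i+1}/y_i^2$ has vanishing residue at each root of $y_i$; equivalently, there is a polynomial $\tilde y_i$ with
\be
\Wr(y_i,\tilde y_i):=y_i\,\tilde y_i'-y_i'\,\tilde y_i=c_i\,y_{i-1}y_{i+1},\qquad c_i\in\BC^\times.
\ee
Since $\Wr(y_i,\tilde y_i)$ is unchanged under $\tilde y_i\mapsto\tilde y_i+c\,y_i$, the polynomial $\tilde y_i$ is pinned down only up to this one free parameter, which is exactly the source of the extra variable $v_j$ in \Ref{1.4.2}; a degree count gives $\deg\tilde y_i=k_{i-1}+k_{i+1}-k_i+1$, the $i$-th coordinate of $s_i*\bk$ for the shifted action.

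The technical heart is the \emph{Reproduction Lemma}: for generic value of the free parameter, replacing $y_i$ by $\tilde y_i$ in a critical tuple yields again a critical tuple, of degree $s_i*\bk$. In direction $i$ this is immediate from the antisymmetry $\Wr(\tilde y_i,y_i)=-\Wr(y_i,\tilde y_i)$; in directions $i'$ with $|i'-i|\ge2$ nothing changes, since $y_i$ is not a neighbour. For the two directions $i\pm1$ one observes that only the logarithmic derivative of the neighbouring polynomial at the roots of $y_{i\pm1}$ enters \Ref{1.1.2B}, and that the displayed relation, evaluated at a root $s$ of $y_{i\pm1}$, forces $y_i(s)\tilde y_i'(s)=y_i'(s)\tilde y_i(s)$; hence $y_i$ and $\tilde y_i$ have equal logarithmic derivative there, and the equations in those directions are preserved. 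Granting this lemma, the first assertion is an induction on $j$ along \Ref{1.4.1}: one starts from the vacuously critical tuple $\by^{(0)}=(1,\dots,1)$ and performs the $j$-th reproduction in direction $i_j$, so that by the group property of the shifted action $\deg\by^{(j)}=s_{i_j}\cdots s_{i_1}*\mathbf{0}=\bk^{(j)}$. Because the word is reduced, the inversion set of $s_{i_j}\cdots s_{i_1}$ grows by one root at each step, so every $\bk^{(j)}=\rho-(s_{i_j}\cdots s_{i_1})\rho=\sum_{\alpha>0,\ (s_{i_j}\cdots s_{i_1})^{-1}\alpha<0}\alpha$ is a strictly larger nonnegative combination of simple roots; in particular all intermediate degrees lie in $\BN^r$ and each step genuinely advances. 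The open dense set $U$ is the intersection of the finitely many genericity loci produced along the way.

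For the second (completeness) assertion I would pass to the scalar differential operator attached to a critical tuple. Because the master function carries no marked points, the associated fundamental differential operator is $\dpar^{\,r+1}$, with solution space the single fixed $(r+1)$-dimensional space $V=\langle 1,x,\dots,x^r\rangle$; the polynomials $y_i$ are recovered as the consecutive divided Wronskians of a complete flag $F_\bullet\subset V$, and $\bk$ records the position of $F_\bullet$ relative to the standard flag. Thus \emph{every} critical point of \emph{every} $\Phi_\bk$ corresponds to a flag in one and the same $V$, so all critical points constitute the single population of $(1,\dots,1)$. The flag $F_\bullet$ lies in a unique Bruhat cell, indexed by some $w\in S_{r+1}$ with $\bk=w*\mathbf{0}$, and reproduction in direction $i$ is precisely the elementary modification of $F_\bullet$ along $\alpha_i$ relating adjacent cells; choosing a reduced decomposition $\bh$ of $w$ and running the reproductions along it rebuilds $F_\bullet$, hence $\bt$, at step $m=\ell(w)$. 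The main obstacle is twofold: the computational point is the Reproduction Lemma in the neighbouring directions $i\pm1$, the only place where the two-term relation is used essentially; the conceptual point is completeness, which rests on the oper/flag correspondence being a bijection whose degree stratification matches the Bruhat decomposition, thereby ruling out critical points lying outside the population of $(1,\dots,1)$.
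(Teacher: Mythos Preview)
The paper does not give its own proof of this theorem: it is quoted from \cite{MV} as background, and the surrounding text only records the construction (Section~1.5), the characterisation of critical points as generic fertile tuples (Lemma~\ref{lem gfer}), and the flag-variety description of the population (Theorem~\ref{1.5.1}). Your sketch is consistent with precisely these ingredients and with the \cite{MV} argument: you reformulate \Ref{1.1.2B} as the existence of a polynomial solution to $\Wr(y_i,\tilde y_i)=c\,y_{i-1}y_{i+1}$ (this is Lemma~\ref{lem gfer}), you run the reproduction inductively along the reduced word checking that fertility persists in the neighbouring directions via the vanishing of $\Wr(y_i,\tilde y_i)$ at the roots of $y_{i\pm1}$, and for completeness you invoke the identification of the population with the flag variety of $V=\BC[x]_{\le r}$ (Theorem~\ref{1.5.1}, and more generally Theorems~\ref{thm crV}, \ref{thm crZ}) so that an arbitrary critical point sits in a Bruhat cell indexed by some $w$ and is reached along any $\bh\in\Red(w)$.

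Two small points worth tightening. First, in your neighbouring-direction step you use that $y_i(s)\ne 0$ and $\tilde y_i(s)\ne 0$ at a root $s$ of $y_{i\pm1}$; this is exactly the genericity hypothesis of Lemma~\ref{lem gfer}, and you should say explicitly that it is maintained for a Zariski-open set of parameters at each step (this is what produces your $U$). Second, the completeness direction really needs the statement that the whole population equals the flag variety of the single space $V=\BC[x]_{\le r}$ and that the fibres of the Wronski/flag map are the Bruhat cells; you cite this as the ``oper/flag correspondence'', which in the paper is Theorem~\ref{1.5.1} (and Theorems~\ref{thm crV}, \ref{thm crZ} in the general case). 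With those references made explicit, your outline matches the \cite{MV} proof that the paper is invoking.
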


The construction of  $\by_\bh$ for $\bh \in \Red(w_0)$ see in Section \ref{1.5} below.

\subsection{Wronskian bootstrap: the details}
\label{1.5}

Starting from the $r$-tuple of polynomials 
$$
\by^{(0)}=(1,\dots,1) \in \BC[x]^r\,,
$$
one constructs the sequence 
$$
\by_\bh = (\by^{(0)}, \by^{(1)}(v_1), \by^{(2)}(v_1, v_2), \ldots, \by^{(m)}(v_1, \ldots , v_m))
\eqno{(1.5.1)}  
$$
by induction. Assume that the sequence  
$$
\by^{(j)}(\bv) = (y^{(j)}_1(\bv; x), \ldots, y^{(j)}_r(\bv; x))\in \BC[v_1, \ldots, v_j; x]^r
$$
has been constructed. 
Then the sequence
$$
\by^{(j+1)}(\bv) = (y^{(j+1)}_1(\bv; x), \ldots, y^{(j+1)}_r(\bv; x))\in \BC[v_1, \ldots, v_j,v_{j+1}; x]^r
$$
is such that 
$$
y^{(j+1)}_i(\bv; x) = y^{(j)}_i(\bv; x), \qquad \forall i\ne i_{j+1}\,.
$$
and $y^{(j+1)}_{i_{j+1}}(v_1,\dots,v_j,v_{j+1}; x)$ is constructed in two steps.

First one shows that there is a unique polynomial 
$\tilde y(v_1,\dots,v_j; x)$ such that 

\begin{enumerate}
\item[(i)]
$
\Wr(y^{(j)}_{i_{j+1}}, \tilde y)\, =\,$ const$ \,y^{(j)}_{i_{j+1}-1}y^{(j)}_{i_{j+1}+1}\,,
$
where $\Wr(f,g)=fg\rq{}-f\rq{}g$ denotes the Wronskian of two functions in $x$ and
the constant does not depend on $\bv,x$;

\item[(ii)]
the polynomial $\tilde y(v_1,\dots,v_j; x)$ is monic  with respect to the variable $x$;

\item[(iii)] the coefficient in $\tilde y(v_1,\dots,v_j; x)$ of the monomial $x^{k^{(j)}_{i_{j+1}}}$ equals zero,
where $k^{(j)}_{i_{j+1}}$ is the $i_{j+1}$-st coordinate of the vector $\bk^{(j)}$.
\end{enumerate}
Then we define
$$
y^{(j+1)}_{i_{j+1}}(v_1,\dots,v_j,v_{j+1}; x): = \tilde y(v_1,\dots,v_j; x) +
v_{j+1}\,y^{(j)}_{i_{j+1}}(v_1,\dots,v_j; x)\,.
$$

Consider all coordinates of the resulting family 
$$
\by^{(m)}(v_1, \ldots , v_m)
=(y^{(m)}_1(\bv; x), \ldots, y^{(m)}_r(\bv; x))\in \BC[v_1, \ldots, v_m; x]^r
$$
up to multiplication by nonzero numbers. This gives  a map
$$
F_\bh : \BC^m\to \BP(\BC[x])^r\,,
$$
where $\BP(\BC[x])$ is the projective space associated with $\C[x]$.
Denote by
$$
\CZ_\bh = F_\bh(\BC^m)\subset \BP(\BC[x])^r
$$
its image.

\vsk.2>
 
Let $V$ be an $r+1$-dimensional complex vector space,
$X= G/B_-$ the space of all complete flags in $V$.
Let $F_0\in X$ be a point.
The choice of $F_0$ gives rise to a decomposition of 
$X$ into $|W| = (r+1)!$ Bruhat cells, which are in bijection with $W = S_{r+1}$:
$$
X = \coprod_{w\in W} X_w\,.
$$
We have
$$
\dim X_w = \ell(w).
$$
For example, for the identity $e\in W$, the cell   $X_e = \{F_0\}$ is the  zero-dimensional cell, 
and for the longest element $w_0\in W$, the cell
 $X_{w_0}$  is the open cell, the space of all flags in general position with $F_0$.

\begin{thm} [\cite{MV}]
\label{1.5.1}
 The union 
$$
\CZ = \bigcup_{w, \bh}\  \CZ_\bh \subset \BP(\BC[x])^r
$$
 over all reduced decompositions $\bh$ of all elements $w\in S_{r+1}$ 
is an algebraic subvariety
of $ \BP(\BC[x])^r$ isomorphic to the variety of complete flags $X= G/B_-$. 

The subspace $\CZ_\bh\subset \CZ$ 
does not depend on a choice of $\bh\in\Red(w)$, 
so it may be denoted by $\CZ_w$. It is identified with the Bruhat cell $X_w\subset X$. 
\qed
\end{thm}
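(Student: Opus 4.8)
The plan is to realize the flag variety $X=G/B_-$ as the variety of complete flags in the space $V=\BC_{\le r}[x]$ of polynomials of degree $\le r$, and to exhibit an explicit \emph{Wronskian map} $\Psi\colon X\to\BP(\BC[x])^r$ whose image is $\CZ$ and which carries each Bruhat cell $X_w$ onto $\CZ_\bh$ for every $\bh\in\Red(w)$. To a complete flag $V_\bullet=(0\subset V_1\subset\dots\subset V_{r+1}=V)$ I associate the $r$-tuple
$$
\Psi(V_\bullet)=\big([\Wr(V_1)],\dots,[\Wr(V_r)]\big),
$$
where $\Wr(V_i)$ is the Wronskian of any basis of $V_i$, a \emph{nonzero} polynomial (the basis elements being linearly independent) well defined up to a scalar. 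I take the base flag $F_0$ to be the osculating flag $V^0_i=\langle 1,x,\dots,x^{i-1}\rangle$, whose Wronskians are all constant; this matches the starting tuple $\by^{(0)}=(1,\dots,1)$ and the base point $\bk^{(0)}=\mathbf 0$, so that $\Psi(X_e)=\CZ_e$.

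First I would check that $\Psi$ is an isomorphism onto its image. Injectivity is constructive: $\Wr(V_1)$ is the function spanning the line $V_1$, hence determines $V_1$; and given $V_{i-1}$ together with $[\Wr(V_i)]$, solving the first-order linear equation $\Wr(f_{i-1},f_i)=\Wr(V_i)$ for $f_i$ recovers $f_i$ modulo $V_{i-1}$, hence determines $V_i$. Since this reconstruction is itself algebraic, $\Psi$ admits an inverse morphism on its image and is therefore an isomorphism onto it. As the Wronskians have bounded degree, $\CZ:=\Psi(X)$ is closed in a product $\prod_i\BP(\BC_{\le D_i}[x])$ of finite-dimensional projective spaces, so it is an algebraic subvariety isomorphic to $X$.

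The heart of the argument is a lemma identifying one reproduction step with one elementary modification of a flag. Fix $i$ and two flags $V_\bullet,V_\bullet'$ agreeing in all steps except the $i$-th, with $V_{i-1}=V_{i-1}'\subset V_i,V_i'\subset V_{i+1}=V_{i+1}'$. Writing $y_j=\Wr(V_j)$, a direct computation with Wronskian identities gives
$$
\Wr(y_i,y_i')=\text{const}\cdot y_{i-1}\,y_{i+1},
$$
which is exactly relation (i) of Section \ref{1.5}; the normalizations (ii)--(iii) then fix the representative $\tilde y$ of the modified Wronskian, and the affine parameter in $y^{(j+1)}_{i_{j+1}}=\tilde y+v_{j+1}\,y^{(j)}_{i_{j+1}}$ is precisely a coordinate on the $\BP^1$ of lines between $V_{i-1}$ and $V_{i+1}$. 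Hence the $j$-th reproduction in direction $i_{j+1}$ corresponds, under $\Psi$, to the minimal-parabolic $\BP^1$-modification of the flag at position $i_{j+1}$, restricted to its big affine cell.

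It then remains to compose these moves. Starting from $F_0$ and applying the modifications in directions $i_1,\dots,i_m$ dictated by a reduced word $\bh\colon w=s_{i_m}\cdots s_{i_1}$ traces out a Bott--Samelson-type image; because $\bh$ is reduced, the coordinates $(v_1,\dots,v_m)$ parametrize an $m$-dimensional chart that is open dense in the Bruhat cell $X_w$ (here $m=\ell(w)$). The degree bookkeeping confirms this: a generic flag in $X_{w}$ has $\deg\Wr(V_i)=i(r+1-i)$ at the top, matching $\bk^{(j)}=s_{i_j}\cdots s_{i_1}*\mathbf 0$ along the way, so the reproduction chart and the Bruhat chart have the same dimension and coincide, giving $\CZ_\bh=\Psi(X_w)$. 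Since $X_w$ is intrinsic and independent of the chosen reduced word, $\CZ_\bh$ depends only on $w$, yielding the well-defined $\CZ_w=\Psi(X_w)\cong X_w$; taking the union over all $w$ gives $\CZ=\Psi(X)$, stratified by the images of the Bruhat cells. The main obstacle is this elementary-modification lemma together with the degree bookkeeping: one must verify that the normalizations (i)--(iii) single out exactly the correct representative $\tilde y$ and that the resulting degrees reproduce the shifted $S_{r+1}$-action, so that the two charts genuinely agree.
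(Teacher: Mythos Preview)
Your approach is essentially the one the paper attributes to \cite{MV}: realize $X$ as complete flags in $V=\BC[x]_{\le r}$ with base flag $F_0=(\BC[x]_{\le 0}\subset\cdots\subset\BC[x]_{\le r})$, embed $X$ in $\BP(\BC[x])^r$ via the Wronskian map, and use the W5 identity (Proposition~\ref{2.4.1}) to show that an elementary flag modification at step $i$ corresponds exactly to the reproduction step in direction $i$. The paper itself does not spell out a proof (the theorem carries a \qed and cites \cite{MV}); its one-paragraph description after the statement, together with the general setup of Section~\ref{sec poavs} and Theorem~\ref{thm 6.1}, matches your outline.

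Two small imprecisions are worth fixing. In the injectivity step, the equation you wrote, $\Wr(f_{i-1},f_i)=\Wr(V_i)$, is not the right one (the left side is a $2\times2$ Wronskian, the right an $i\times i$ one); what actually recovers $V_i$ from $V_{i-1}$ and $[y_i]$ is the order-$(i{-}1)$ linear ODE $\Wr(f_1,\dots,f_{i-1},\,\cdot\,)=c\,y_i$, whose kernel is exactly $V_{i-1}$. And for a reduced word $\bh$ the affine Bott--Samelson chart $\BC^{\ell(w)}$ maps \emph{isomorphically} onto the Bruhat cell $X_w$, not merely onto an open dense subset; you need this exact equality to conclude $\CZ_\bh=\Psi(X_w)$ rather than just a dense inclusion.
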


In particular, the subset $\CZ_{w_0} \cong X_{w_0}$ 
is isomorphic to the big Bruhat cell $N\subset G/B_-$.

\vsk.2>
The algebraic subvariety $\CZ\subset \BP(\BC[x])^r$ is what was called in \cite{MV} the {\it
population of critical points originated from} $\by^{(0)}$.

\vsk.2>
 
 The proof of Theorem \ref{1.5.1} in \cite{MV} identifies the variety $\CZ$ with the space of complete flags of a particular
$r+1$-dimensional vector space $V$, where
$$
V = V_r = \BC[x]_{\leq r} \subset \BC[x]
$$ 
is the vector space of polynomials of degree $\leq r$, and the flag $F_0$ is the standard complete flag 
$$
F_0 = (V_0\subset \ldots \subset V_{r-1}\subset V_r), \qquad  V_i = \BC[x]_{\leq i}\,.
$$

\subsection{Example, \cite[Section 3.5]{MV}}
\label{1.5.2}

For $r = 2$, let  us see how the populations  give rise to 
a decomposition of $X = \on{SL}_3(\BC)/B_-$ into six Bruhat cells. 
\vsk.2> 

We have the zero-dimensional Bruhat cell
\bea
\CZ_{\id} =\{ (1:1)\} \in \BP(\C[x])^2\,.
\eea
We have two one-dimensional  Bruhat cells:
\bea
\CZ_{s_1} &=& \{ (x+c_1\,:\,1)\ |\  c_1\in \C \} \subset \BP(\C[x])^2\,,
\\
\CZ_{s_2} &=& \{ (1\,:\,x+c_1\rq{})\ |\  c_1\rq{}\in \C \} \subset \BP(\C[x])^2\,,
\eea
two two-dimensional Bruhat cells:
\bea
\CZ_{s_2s_1} &=& \{ (x+c_1\,:\, x^2 + 2c_1x + c_2)\ |\  c_1, c_2\in \C \} \subset \BP(\C[x])^2\,,
\\
\CZ_{s_1s_2} &=& \{ (x^2 + 2c_1\rq{}x + c_2\rq{}\,:\,
x+c_1\rq{})\ |\  c_1\rq{}, c_2\rq{}\in \C \} \subset \BP(\C[x])^2\,.
\eea
The longest element $w_0\in S_3$ has two reduced decompositions $s_1s_2s_1$ and $s_2s_1s_2$, which give
two parametrizations of the same three-dimensional Bruhat cell:
\bea
\CZ_{s_1s_2s_1} &=& \{ (x^2 + c_3x + c_1c_3 - c_2    \,:\, x^2 + 2c_1x + c_2)\ |\  c_1, c_2, c_3\in \C \} \subset \BP(\C[x])^2\,,
\\
\CZ_{s_2s_1s_2} &=& \{ (x^2 + 2c_1\rq{}x + c_2\rq{}\,:\,
x^2 + c_3\rq{}x + c_1\rq{}c_3\rq{} - c_2\rq{})\ |\  c_1\rq{}, c_2\rq{}, c_3\rq{}\in \C \} \subset \BP(\C[x])^2\,.
\eea
The two coordinate systems are related by the equations
\bean
\label{1.5.2.1}
c_1\rq{}=c_3/2\,,\qquad c_2\rq{}=c_1c_3-c_2\,, \qquad c_3\rq{}=2c_1\,.
\eean
Notice that this transformation is involutive.

The union of the six Bruhat cells gives the subvariety $\CZ\subset \BP(\C[x])^2$ isomorphic to $\on{SL}_3(\BC)/B_-$.
The subvariety $\CZ$ consists of all pairs of quadratic polynomials 
$(a_2x^2+a_1x+a_0 \,:\, b_2x^2+b_1x+b_0)$ such that
\bean
\label{eqn}
a_2b_0 \,-\, \frac12a_1b_1\,+\,a_0b_2\,=\, 0\,.
\eean

\subsection{Not necessarily reduced words}
\label{1.5.3}

 One can associate to an arbitrary, not necessarily reduced 
word $\bh$ of length $m$ a sequence of $m$ $r$-tuples \Ref{1.5.1} as well, see \cite{MV}. 

Namely, using  the Wronskian differential equation
$$
W(y^{(j)}_{i_{j+1}}, \tilde y)\, = \text{const}\cdot y^{(j)}_{i_{j+1}-1}y^{(j)}_{i_{j+1}+1}
$$
as in (i) above alone, but without normalizing conditions (ii) and (iii) one gets for each $j=1,\dots,m$ a tuple
$$
\by^{(j)}(\bv) = (y^{(j)}_1(\bv; x)\,:\, \ldots\,:\, y^{(j)}_r(\bv; x)) \ \in P(\C[x])^r\,,
$$
where $\bv$ belongs to a variety of parameters $B^{(j)}$,
 which is an iterated $\BC$-torsor. This means that $B^{(j)}$ is included into a sequence of fibrations 
$$
B^{(j)} \lra B^{(j-1)} \lra \ldots \lra B^{(1)}\cong \BC\,,
$$
where each step $B^{(p)} \lra B^{(p-1)}$ is an analytic $\BC$-torsor, locally trivial in the 
usual topology. 

But the corresponding cohomology $H^1(\BC^p;\BC)$ vanishes which implies that the torsors are trivial, and this provides a global isomorphism $B^{(j)} \cong \BC^j$.

\vsk.2> 

\subsection{Main point}
The main new point of the present work is a definition of a certain {\it modified} 
reproduction, which we call the {\it normalized reproduction}. 
It provides the  varieties of $r$-tuples of polynomials $\CY^{Bethe}$, 
to be called the {\it Bethe cells}, equipped with a system 
of coordinate charts isomorphic to $N$ equipped  with  the 
Whitney-Lusztig charts.  We also define the {\it totally positive subspace}
$\mc Y^{Bethe}_{>0}\subset \mc Y^{Bethe}$ isomorphic to the subspace
$N_{>0}\subset N$ of totally positive upper triangular matrices.

\subsection{Contents of the paper}
\label{1.6}

 Section \ref{2} contains some preparations. 
In Section \ref{3}  a modification of the mutation procedure of Section \ref{1.5} is introduced.
Based on it, 
in  Section \ref{4} the {\it Bethe cell} is defined, 
and Comparison Theorem \ref{4.8} is proven, which is one of our main results. 
In Section \ref{5} we first describe in full detail Wronsky evolution for
the case of groups $\on{SL}_3$ and $\on{SL}_4$. 
In particular we compute explicitly the positive part $\CY^{Bethe}_{>0}$ of the Bethe cell, 
see Theorem \ref{5.4.1}. 
Afterwards we prove  { Triangular Theorem} \ref{5.6}, which establishes an explicit isomorphism 
between $N$ and the Bethe cell.
Section \ref{6}  contains a generalization of the previous constructions. Namely, we define the
Wronskian 
evolution, the Bethe cell, etc. associated with a finite-dimensional subspace 
$V\subset \BC[x]$ and a distinguished complete flag $F_0$ in $V$.
 The previous consideration corresponded the subspace $\BC[x]_{\leq r}$ 
of polynomials of degree $\leq r$.  
In Section \ref{7}  we present a version of the above considerations for the 
base affine space. Namely, we define a variety $\tCY^{Bethe}$, which is related to the 
previous Bethe variety $\CY^{Bethe}$ in the same way as the big cell in the base affine space $G/N_-$ is related to the 
big cell in the flag space $G/B_-$. 
In Section \ref{8}  we interpret the Whitney-Lusztig data in the language of higher Bruhat orders, 
\cite{MS}, in particular give its complete description in the crucial 
case of $\on{SL}_4$.

\medskip
 We are grateful to E.\,Mukhin and M.\,Shapiro for useful discussions.

\section{Generalities on Wronskians}
\label{2}
\subsection{Wronskian differential equation}

The Wronskian of two functions $f(x), g(x)$ is the function
$$
\Wr(f, g) = f g' - f' g = f^2(g/f)'\,.
$$
Given $f(x)$ and $h(x)$,  the equation
\bean
\label{2.1.1}
\Wr(f, g) =  h
\eean
with respect to the function $g(x)$ has a solution  
$$
g(x) =  f(x)\int h(x)f(x)^{-2}dx \,.
$$
The general solution is
\bean
\label{2.1.2}
g(x,c) =  f(x)\int_{x_0}^x h(t)f(t)^{-2}dt + c f(x),\qquad
 c\in \BC\,.
\eean

\subsection{Univaluedness}
\label{2.2}
 Let $f(x), g(x)\in \BC[x]$ be polynomials. Then
$h(x) := \Wr(f,g)$ is a polynomial.
Hence  the indefinite integral of a rational function,  
$$
\int h(x)f(x)^{-2}dx\,,
$$
has no  logarithmic terms, which is equivalent to the condition:

\medskip
\noindent
(U)\ {\it The function}  
 {\it $hf^{-2}$ has zero residues at its poles.}

\subsection{Wronskian}
\label{2.3}

Let $f_1(x), \ldots, f_n(x)$ be holomorphic functions. 
Define the Wronskian matrix 
$ (f_b^{(a-1)})_{a, b = 1}^n$
and the {\it Wronskian}
$$
\Wr(f_1, \ldots, f_n) =  \det\, (f_b^{(a-1)})_{a, b = 1}^n\,.
$$
The Wronskian is a polylinear  skew-symmetric function of $f_1, \ldots, f_n$.

\begin{exmp}
\label{2.3.1}
We have
$$
\Wr\Big(1, x, \frac{x^2}2, \ldots, \frac {x^n}{n!}\Big) \,=\, 1\,.
$$  
More generally, 
\bean
\label{5.8.1}
\Wr(x^{d_1}, \dots, x^{d_k}) = \prod_{i<j} (d_j - d_i)\,\cdot\, x^{\sum d_i - k(k+1)/2},
\eean
see this formula in \cite{MTV}.

\end{exmp}

\subsection{W5 Identity}
\label{2.4}
Let $f_1(x), f_2(x), \ldots$ be a sequence of holomorphic functions. 
For an ordered finite subset 
$$
A = \{i_1, \ldots, i_a\} \subset \BN := \{1, 2, \ldots\},
$$
we write
$$
\Wr(A) = \Wr(f_{i_1}, \ldots, f_{i_a})\,.
$$
Denote
$$
[a] = \{ 1, 2, \ldots, a\}.
$$

\begin{prop} [W5 Identity, {\cite[Section 9]{MV}}]
\label{2.4.1}

 Let 
$$
A = [a+1], \quad B = [a]\cup \{a + 2\}.
$$
Then
\bean
\Wr(\Wr(A), \Wr(B)) = \Wr(A\cap B)\cdot \Wr
(A\cup B).
\label{2.4.1}
\eean
\end{prop}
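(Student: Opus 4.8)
The plan is to prove the identity by induction on $a = |A\cap B|$, peeling off the function $f_1$ that is common to both $A$ and $B$. The engine of the induction is a reduction-of-order formula that lowers the number of arguments in a Wronskian by one.

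First I would record the two elementary properties on which everything rests. The scaling identity $\Wr(\phi f_1,\ldots,\phi f_n) = \phi^{\,n}\,\Wr(f_1,\ldots,f_n)$, valid for any function $\phi$, follows from Leibniz's rule: it exhibits the Wronskian matrix of the $\phi f_i$ as $M\cdot(f_b^{(c-1)})$ with $M$ lower triangular and every diagonal entry equal to $\phi$, so $\det M = \phi^{\,n}$. Combined with the collapse $\Wr(1,h_2,\ldots,h_n) = \Wr(h_2',\ldots,h_n')$ — obtained by expanding the Wronskian determinant along its first column, whose entries are the derivatives of the constant $1$ — this yields the reduction formula
$$
\Wr(f_1,\ldots,f_n) \,=\, f_1^{\,n}\,\Wr(g_2,\ldots,g_n), \qquad g_i := (f_i/f_1)',
$$
which expresses an $n$-function Wronskian through an $(n-1)$-function Wronskian of the new functions $g_i$.

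Next I would run the induction. The base case $a=0$ reads $\Wr(\Wr(f_1),\Wr(f_2)) = \Wr(\varnothing)\,\Wr(f_1,f_2)$, i.e. $\Wr(f_1,f_2) = 1\cdot\Wr(f_1,f_2)$ with the usual convention $\Wr(\varnothing)=1$. For $a\ge 1$ I would apply the reduction formula to all four Wronskians appearing in the claim. The inner pair $\Wr(A)=\Wr(f_1,\ldots,f_{a+1})$ and $\Wr(B)=\Wr(f_1,\ldots,f_a,f_{a+2})$ each acquire the factor $f_1^{\,a+1}$ and become, respectively, $\Wr(g_2,\ldots,g_{a+1})$ and $\Wr(g_2,\ldots,g_a,g_{a+2})$; the scaling identity with $n=2$ and $\phi=f_1^{\,a+1}$ then gives
$$
\Wr(\Wr(A),\Wr(B)) \,=\, f_1^{\,2(a+1)}\,\Wr\bigl(\Wr(g_2,\ldots,g_{a+1}),\,\Wr(g_2,\ldots,g_a,g_{a+2})\bigr).
$$
The bracketed expression is exactly the left-hand side of the W5 identity for the $a+1$ functions $g_2,\ldots,g_{a+2}$, whose common part $g_2,\ldots,g_a$ has size $a-1$, so the induction hypothesis rewrites it as $\Wr(g_2,\ldots,g_a)\,\Wr(g_2,\ldots,g_{a+2})$. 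On the other side the reduction formula gives $\Wr(A\cap B) = f_1^{\,a}\,\Wr(g_2,\ldots,g_a)$ and $\Wr(A\cup B) = f_1^{\,a+2}\,\Wr(g_2,\ldots,g_{a+2})$, whose product carries the factor $f_1^{\,2a+2}=f_1^{\,2(a+1)}$. The powers of $f_1$ match and the two reduced Wronskian products agree, closing the induction.

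The only delicate point — and the main, rather minor, obstacle — is that the reduction formula divides by $f_1$ and so is valid only where $f_1\not\equiv 0$. Since both sides are universal differential polynomials with integer coefficients in $f_1,\ldots,f_{a+2}$ and their derivatives, it suffices to verify the identity for generic (in particular differentially independent) functions, for which $f_1$ is invertible; the resulting polynomial identity then holds for all holomorphic $f_1,\ldots,f_{a+2}$. As an alternative route avoiding the division, I would view $\Wr(f_{i_1},\ldots,f_{i_a})$ as a Pl\"ucker coordinate of the point determined by the jets $(f_i,f_i',f_i'',\ldots)$, whereupon the assertion becomes a short three-term Pl\"ucker relation (equivalently a Desnanot--Jacobi/Dodgson condensation identity for the pertinent determinants); but I would keep the inductive argument above as the primary proof since it is entirely self-contained.
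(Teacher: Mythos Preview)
Your proof is correct. Note, however, that the paper does not actually supply a proof of this proposition: it is stated with a citation to \cite[Section 9]{MV}, and only the special case $a=1$ is written out as an example with the comment ``as one can easily check.'' So there is no in-paper argument to compare against.

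Your inductive route via the reduction-of-order formula $\Wr(f_1,\ldots,f_n)=f_1^{\,n}\,\Wr\bigl((f_2/f_1)',\ldots,(f_n/f_1)'\bigr)$ is self-contained and standard, and the bookkeeping of the powers of $f_1$ is done correctly. The alternative you sketch at the end---recognising the statement as a Desnanot--Jacobi (Sylvester) identity for minors of the Wronskian matrix of $f_1,\ldots,f_{a+2}$---is equally valid and is closer in spirit to how such identities are usually packaged in the literature, including the cited source. Your genericity remark disposing of the division by $f_1$ is also in order, since both sides of the claimed identity are manifestly differential polynomials with integer coefficients in the $f_i$.
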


\begin{exmp}
\label{2.4.2}
We have
\bean
\Wr(\Wr(f_1,f_2), \Wr(f_1,f_3))\, =\, f_1\Wr(f_1,f_2,f_3),
\label{2.4.2}
\eean
as one can easily check.
\end{exmp}

\section{Normalized Wronskian bootstrap}
\label{3}

\subsection{Generic and fertile tuples}
Let $\by = (y_1(x), \ldots, y_r(x)) \in \BC[x]^r$
be a tuple of polynomials. Define $y_0=y_{r+1}=1$.

\vsk.2>
We say that the $r$-tuple $\by$ is {\it fertile}, if for every $i$ 
the equation
$$
\Wr(y_i(x),\hat y_i(x))=\,y_{i-1}(x)\,y_{i+1}(x)\,\,
$$
with respect to $\hat y_i(x)$ admits a polynomial solution.
We say that  $\by$ is {\it generic}, if for every $i$ the polynomial $y_i(x)$ has  no multiple roots 
and the polynomials $y_i(x)$ and $\,y_{i-1}(x)\,y_{i+1}(x)\,$ have no common roots.

\vsk.2>
Let $t_{i,1}, \ldots, t_{i, k_i}$ be the roots of $ y_i(x)$ 
ordered in any way.  Consider the tuple
$$
\bt = (t_{1,1}, \ldots, t_{1, k_1}; \ldots ;t_{r,1}, \ldots, t_{r, k_r})\,.
$$

\begin{lem}
[\cite{MV}] 
\label{lem gfer}
The tuple $\by$ is generic and fertile if and only if the tuple
$\bt$ is a critical point of the master function $\Phi_\bk$, where
$\bk=(\deg y_1,\dots,\deg y_r)$.

\end{lem}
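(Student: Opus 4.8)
The plan is to prove the equivalence one index $i$ at a time and then reassemble. By definition fertility is the conjunction over $i=1,\dots,r$ of the solvability of $\Wr(y_i,\hat y_i)=y_{i-1}y_{i+1}$ in polynomials, while the critical point condition \Ref{1.1.2B} is the conjunction over the pairs $(i,m)$ of the individual Bethe equations. So it suffices to show, for each fixed $i$, that the $i$-th Wronskian equation admits a polynomial solution if and only if the $k_i$ Bethe equations indexed by $m=1,\dots,k_i$ hold. The link between the two is the residue condition (U) of Section \ref{2.2}.

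First I would reduce fertility at index $i$ to condition (U). Writing $f=y_i$ and $h=y_{i-1}y_{i+1}$, the identity $\Wr(y_i,\hat y_i)/y_i^2=(\hat y_i/y_i)'$ shows that the equation $\Wr(y_i,\hat y_i)=h$ amounts to $(\hat y_i/y_i)'=hf^{-2}$, so a polynomial solution exists precisely when the antiderivative of $hf^{-2}$ is rational, that is, when $hf^{-2}$ has vanishing residues at all of its poles. One implication is automatic: if $\hat y_i$ is a polynomial then $hf^{-2}=(\hat y_i/y_i)'$ is the derivative of a rational function and hence has no residues. For the converse I would invoke genericity: when $y_i$ has only simple roots and is coprime to $y_{i-1}y_{i+1}$, the function $hf^{-2}$ has poles of order \emph{exactly} two, located at the roots of $y_i$; then under (U) its antiderivative has only simple poles there, and multiplying by $y_i$ (which has simple zeros at those points) cancels them, so $\hat y_i=y_i\int hf^{-2}\,dx$ is a genuine polynomial.

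Next I would compute the residue of $y_{i-1}y_{i+1}/y_i^2$ at a root $t_{i,m}$ of $y_i$. Writing $y_i(x)=(x-t_{i,m})\,w(x)$ with $w(t_{i,m})\ne 0$ and $P=y_{i-1}y_{i+1}$, the double pole contributes the residue $(P/w^2)'(t_{i,m})$, whose vanishing is equivalent to $P'(t_{i,m})/P(t_{i,m})=2\,w'(t_{i,m})/w(t_{i,m})$. Expanding the logarithmic derivatives as sums over roots turns this into
$$
\sum_l\frac1{t_{i,m}-t_{i-1,l}}+\sum_l\frac1{t_{i,m}-t_{i+1,l}}
=2\sum_{l\ne m}\frac1{t_{i,m}-t_{i,l}}\,,
$$
which, after moving everything to one side, is exactly the Bethe equation \Ref{1.1.2B} for type $A_r$, where $a_{ii}=2$ and $a_{i,i\pm 1}=-1$. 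Since the poles of $hf^{-2}$ are precisely the $k_i$ simple roots of $y_i$, condition (U) at index $i$ is equivalent to this whole block of $k_i$ Bethe equations.

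Finally I would observe that genericity is not an extra hypothesis on the critical-point side: the denominators appearing in \Ref{1.1.2B} are $t^{(i)}_m-t^{(i)}_l$ and $t^{(i)}_m-t^{(i\pm1)}_l$, so $\bt$ can satisfy the Bethe equations only if all of these are nonzero, i.e.\ only if $\by$ is generic. Hence the chain ``$\by$ generic and fertile $\Leftrightarrow$ (U) holds for every $i$ $\Leftrightarrow$ the Bethe equations hold $\Leftrightarrow$ $\bt$ is a critical point of $\Phi_\bk$'' closes in both directions, with $\bk=(\deg y_1,\dots,\deg y_r)$. The main obstacle is the converse half of the reduction to (U): checking that vanishing residues actually yield a \emph{polynomial} (and not merely rational) solution $\hat y_i$. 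This is exactly where the simple-pole structure forced by genericity is indispensable, and it is the step that must be handled with care rather than the residue bookkeeping, which is routine.
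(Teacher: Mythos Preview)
Your argument is correct. The paper does not actually prove this lemma; it merely cites \cite{MV}, so there is nothing in the present paper to compare against. What you have written is essentially the standard proof (and the one in \cite{MV}): rewrite the Wronskian equation as $(\hat y_i/y_i)'=y_{i-1}y_{i+1}/y_i^2$, reduce polynomial solvability to the vanishing of residues (condition (U) of Section~\ref{2.2}), and identify each residue with the logarithmic-derivative form of the corresponding Bethe equation. Your handling of the converse direction---using genericity to ensure that the poles of $y_{i-1}y_{i+1}/y_i^2$ are exactly double, so that a residue-free primitive has only simple poles which are then cancelled by the simple zeros of $y_i$---is the correct and necessary point; likewise your observation that genericity is forced on the critical-point side by the very domain of definition of $\Phi_\bk$ is right. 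One small remark: when $\deg(y_{i-1}y_{i+1})\ge 2\deg y_i$ the integrand has a nontrivial polynomial part, but this integrates to a polynomial and causes no trouble, so your argument goes through unchanged.
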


\subsection{Normalized mutations}

 Let  $\by = (y_1, \ldots, y_r) \in \BC[x]^r$
be an $r$-tuple of polynomials such 
\bean
\label{01}
y_i(0) = 1\,,\qquad i=1,\dots,r\,. 
\eean
\vsk.3>

\begin{lem}
There exists a unique solutions $\tty_i(x)$ of the differential equation
\bean
\Wr(y_i(x), \tty_i(x)) \,=\, \,y_{i-1}(x)\,y_{i+1}(x)\,\,,
\label{3.1.1a}
\eean
such that 
\bean
\tty_i(x) = x + \mc O(x^2)\,  \quad \on{as}\quad x\to 0\,.
\label{3.1.1c}
\eean

\end{lem}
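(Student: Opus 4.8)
The plan is to reduce everything to the explicit integration formula \Ref{2.1.2} for the Wronskian equation and to observe that the normalization $\tty_i(x)=x+\mc O(x^2)$, which superficially imposes two conditions on the $2$-jet of $\tty_i$ at the origin, collapses to the single condition $\tty_i(0)=0$ as soon as one feeds in the hypotheses \Ref{01}. This turns the statement into a one-parameter existence-and-uniqueness problem that is solved by inspection.

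First I would specialize \Ref{2.1.2} to the equation \Ref{3.1.1a} by setting $h(x):=y_{i-1}(x)\,y_{i+1}(x)$ and choosing base point $x_0=0$. By \Ref{01} we have $y_i(0)=1\neq 0$ (and, with the convention $y_0=y_{r+1}=1$, also $y_{i-1}(0)=y_{i+1}(0)=1$, so $h(0)=1$); hence $y_i$ is invertible near $0$ and the integrand $h/y_i^2$ is holomorphic there. Thus
\be
g_0(x)\,=\,y_i(x)\int_0^x \frac{h(t)}{y_i(t)^2}\,dt
\ee
is a well-defined holomorphic germ at $0$ with $g_0(0)=0$. Since the homogeneous equation $\Wr(y_i,g)=0$ has solution space $\BC\,y_i$ near $0$, formula \Ref{2.1.2} shows that every germ solution of \Ref{3.1.1a} has the form $\tty_i=g_0+c\,y_i$ for a unique $c\in\BC$, so the solutions form an affine line and it remains only to pin down $c$.

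Next I would impose the normalization. Because $y_i(0)=1$, evaluation at $x=0$ gives $\tty_i(0)=g_0(0)+c=c$, so the requirement $\tty_i(0)=0$ forces $c=0$ and determines $\tty_i$ uniquely. The point is that the second half of the normalization, $\tty_i'(0)=1$, is then automatic: evaluating \Ref{3.1.1a} at $x=0$ and using $y_i(0)=1$ and $\tty_i(0)=0$ yields
\be
\tty_i'(0)\,=\,y_i(0)\,\tty_i'(0)-y_i'(0)\,\tty_i(0)\,=\,\Wr(y_i,\tty_i)(0)\,=\,y_{i-1}(0)\,y_{i+1}(0)\,=\,1\,.
\ee
Hence $\tty_i(x)=x+\mc O(x^2)$ is equivalent to $\tty_i(0)=0$, and $c=0$ is its unique solution, giving both existence and uniqueness. (Equivalently, writing $\tty_i=\sum_{n\ge 0}b_nx^n$, the coefficient of $x^n$ in \Ref{3.1.1a} determines $b_{n+1}$ from $b_0,\dots,b_n$ with nonzero leading factor $(n+1)\,y_i(0)=n+1$, so $b_0=0$ forces $b_1=1$ and then all higher $b_n$ in turn.)

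The one genuinely delicate step, which I expect to be the main obstacle, is whether $\tty_i$ is globally a \emph{polynomial} rather than merely a holomorphic germ. This is precisely the univaluedness issue of Section \ref{2.2}: $g_0$ extends to a polynomial exactly when $h/y_i^2=y_{i-1}y_{i+1}/y_i^2$ has vanishing residues at its poles, i.e. when condition (U) holds, which is the analytic content of fertility of $\by$ (cf. Lemma \ref{lem gfer}). Under (U) the indefinite integral is rational with no logarithmic part, its simple pole at each (simple) root $a$ of $y_i$ is cancelled by the zero of the prefactor $y_i$, and $g_0$ is therefore entire, hence polynomial; the normalization then selects the unique polynomial solution with $\tty_i(0)=0$. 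So while local existence and uniqueness are immediate, it is here that the genericity and fertility hypotheses underlying the bootstrap must be invoked.
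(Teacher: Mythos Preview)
Your argument is correct and follows the same route as the paper: the paper's proof is literally the single formula
\[
\tty_i(x)\,=\,y_i(x)\int_0^x \frac{y_{i-1}(u)\,y_{i+1}(u)}{y_i(u)^2}\,du\,,
\]
and you have supplied the details behind it (the general solution is $g_0+c\,y_i$, the condition $\tty_i(0)=0$ forces $c=0$, and $\tty_i'(0)=1$ then follows from evaluating the Wronskian equation at $0$). Your final paragraph on polynomiality is accurate but not part of this lemma: the paper treats that separately, right after the proof, by invoking the generic-and-fertile hypothesis and Lemma~\ref{lem gfer}.
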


\begin{proof}
We have 
\bean
\label{form tty}
\tty_i(x)\,=\,   y_i(x) \int_{0}^x \frac{y_{i-1}(u)y_{i+1}(u)}{y_i^2(u)}\,du\,.
\eean
\end{proof}

Assume that the tuple $\by$ is generic and fertile, then the function
$\tty_i(x)$ is a polynomial by Lemma \ref{lem gfer}.
The polynomial $\tty_i(x)$ can be determined either by formula  \Ref{form tty} or
by the method of undetermined coefficients, see examples in Section \ref{5}. 

\vsk.2>
For $c\in\C$, denote
\bean
\ty_i(c;x) = y_i(x) + c\tty_i(x)\,.
\label{3.1.1b}
\eean
Notice that for any $c$ we have $\ty_i(c;0)=1$.

Define a new $r$-tuple of polynomials
\bean
\label{niy}
\nu_i(c)\by := (y_1(x), \ldots, y_{i-1}(x),\, \ty_i(c;x),\, y_{i+1}(x), \ldots,  y_r(x))
\eean
and call it {\it the $i$-th normalized mutation} of the fertile generic tuple $\by$. 
\vsk.2>

Equation \Ref{3.1.1a} with the normalizing condition \Ref{3.1.1c}
 will be called the {\it normalized Wronskian evolution}, or {\it  bootstrap, 
equation}. 

\subsection{Normalized population related to a word} 
\label{3.2}

Let
\bean
\label{3.2.11}
\bh = s_{i_m}\ldots s_{i_1}
\eean
be {\it any} word in $S_{r+1}$. 
Sometimes we will write for brevity  simply
\bean
\bh = (i_m\ldots i_1)
\label{3.2.12} 
\eean
instead of \Ref{3.2.11}. 

\vsk.2>
Now we proceed as in Section \ref{1.4},  
but will use  the normalized mutations 
$\nu_i$. Namely, we start with
$$
\by^{(0)} = (1, \ldots, 1) \ \in \C[x]^r
$$
and for each  $\bc=(c_1, \ldots, c_m)\in\C^m$ define an $r$-tuples of polynomials
 by the formula:
\bean
\by_\bh(\bc) = (y_{\bh,1}(\bc;x), \dots, y_{\bh,r}(\bc;x))\,
:=\,\nu_{i_m}(c_m)\ldots \nu_{i_1}(c_1)\by^{(0)}\,.
\label{3.2.2} 
\eean

Notice that for any $i=1,\dots,r$ and $\bc\in \C^m$ we have
\bean
\label{01c}
y_{\bh,i}(\bc;0)\,=\, 1\,.
\eean

\begin{thm}[\cite{MV}]
\label{3.3} For any $\bh$ and $\bc\in \C^m$, the tuple 
$\by_\bh(\bc) $ is a tuple of polynomials.
Moreover, there exists a Zarisky open dense subspace  
$U \subset \BC^m$ such that for every   
$\bc \in U$, the tuple of roots $\bt_\bh(\bc)$ of the polynomials
$(y_{\bh,1}(\bc;x), \dots, y_{\bh,r}(\bc;x))$
is a critical point of the corresponding  master function $\Phi_\bk$.

\end{thm}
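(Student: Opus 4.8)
The plan is to argue by induction on the length $m$ of the word $\bh$, carrying the stronger inductive hypothesis that each intermediate tuple $\by^{(j)}(\bc)=\nu_{i_j}(c_j)\cdots\nu_{i_1}(c_1)\by^{(0)}$ is, for \emph{every} $\bc\in\C^j$, a tuple of polynomials which is moreover \emph{fertile} in the sense of Lemma \ref{lem gfer}. Granting this invariant, the first assertion of the theorem is immediate. For the second, by Lemma \ref{lem gfer} the tuple of roots $\bt_\bh(\bc)$ is a critical point of $\Phi_\bk$ (with $\bk=\bk^{(m)}=(\deg y_{\bh,1},\dots,\deg y_{\bh,r})$) as soon as $\by_\bh(\bc)$ is generic and fertile, so it will remain only to produce the genericity locus. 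The base case $j=0$ is trivial, since $\by^{(0)}=(1,\dots,1)$ is polynomial and fertile, the witness at each position being the solution $x$ of $\Wr(1,\hat y)=1$.

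The main step, and the crux of the argument, is to run the inductive step by realizing the tuples as Wronskians of a flag, as in \cite{MV}. Fix $V=\C[x]_{\le r}$, choose polynomials $g_1,\dots,g_{r+1}$ spanning the subspaces $F_p=\mathrm{span}(g_1,\dots,g_p)$ of a complete flag, and put $Y_p:=\Wr(g_1,\dots,g_p)$, $Y_0=1$; by Example \ref{2.3.1} one may take the scaled monomials so that $\by^{(0)}\doteq(Y_1,\dots,Y_r)$ (where $\doteq$ means equality up to a nonzero constant). The W5 identity of Proposition \ref{2.4.1}, applied with $A=[p]$ and $B=[p-1]\cup\{p+1\}$, gives
\begin{equation*}
\Wr\bigl(Y_p,\ \Wr(g_1,\dots,g_{p-1},g_{p+1})\bigr)\,=\,Y_{p-1}Y_{p+1}\,,
\end{equation*}
so $(Y_1,\dots,Y_r)$ is fertile, the witness at position $p$ being $\Wr(g_1,\dots,g_{p-1},g_{p+1})$. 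Now the normalized mutation at position $i:=i_{j+1}$ replaces $Y_i$ by $\ty_i(c;x)=Y_i+c\,\tty_i$; since $\tty_i$ solves the same equation \Ref{3.1.1c}, \Ref{form tty} as the witness, it differs from it by a multiple of $Y_i$, whence by linearity of the Wronskian in its last argument $\ty_i(c;x)\doteq\Wr(g_1,\dots,g_{i-1},g_i')$ with $g_i'=g_{i+1}+c'g_i$. Thus the mutation moves the flag only at step $i$, replacing $F_i$ by $F_i'=\mathrm{span}(F_{i-1},g_i')$, and $F_{i-1}\subsetneq F_i'\subsetneq F_{i+1}$ holds for every $c$. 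Reapplying Proposition \ref{2.4.1} in the relabeled sequence $g_1,\dots,g_{i-1},g_i',g_{i+1},\dots$ furnishes polynomial fertility witnesses at the neighbouring positions (at $i+1$ the witness is $\Wr(g_1,\dots,g_{i-1},g_i',g_{i+2})$, and indeed $\Wr(g_1,\dots,g_{i-1},g_i')=\ty_i$, $\Wr(g_1,\dots,g_i',g_{i+1},g_{i+2})\doteq Y_{i+2}$); the positions at distance $\ge 2$ are untouched, and at position $i$ itself $-Y_i$ is a witness. This shows the mutated tuple is again polynomial and fertile for every $c$, completing the induction. In particular $\tty_i$ really is a polynomial at each step, which is exactly the univaluedness condition (U) of Section \ref{2.2}; the normalizations \Ref{01}, \Ref{3.1.1c}, \Ref{01c} only fix the particular solution $\tty_i=x+\mc O(x^2)$ and the scaling $y_i(0)=1$, affecting neither polynomiality nor the root set, so the outcome agrees projectively with the population of \cite{MV}.

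Finally I would construct the Zariski open set. Fertility holds for all $\bc$ by the induction above. Genericity, i.e. that no $y_{\bh,i}(\bc;x)$ has a multiple root and no $y_{\bh,i}$ shares a root with $y_{\bh,i-1}y_{\bh,i+1}$, is the nonvanishing of the discriminants of the $y_{\bh,i}$ and of the resultants of $y_{\bh,i}$ with $y_{\bh,i-1}y_{\bh,i+1}$; these are polynomial in the coefficients of the $y_{\bh,i}$, hence in $\bc$, so the genericity locus $U\subset\C^m$ is Zariski open. It is nonempty: by Theorem \ref{1.4.1} (equivalently Theorem \ref{1.5.1}) the population contains critical points, so by Lemma \ref{lem gfer} generic fertile tuples occur, whence $U\ne\varnothing$. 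As $\C^m$ is irreducible, $U$ is dense, and on $U$ Lemma \ref{lem gfer} yields the asserted critical points.

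The hard part is the fertility-preservation step: showing that the normalized mutation sends polynomial tuples to polynomial tuples for \emph{every} value of the parameter, equivalently that condition (U) is propagated. This is precisely where the W5 identity (Proposition \ref{2.4.1}) together with the realization of the $y_i$ as Wronskians of a flag are indispensable. Once this is in hand, the genericity bookkeeping and the reduction of the critical-point claim to Lemma \ref{lem gfer} are routine.
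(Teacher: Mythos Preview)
Your approach is essentially the one the paper itself develops: the paper does not give a self-contained proof of Theorem~\ref{3.3} (it is cited from \cite{MV}), but the polynomiality assertion is implicitly re-proved by the Comparison Theorem~\ref{4.8}, whose proof is exactly your flag/W5 argument --- realize the $y_i$ as Wronskians $\Wr(g_1,\dots,g_i)$ of a flag in $V=\C[x]_{\le r}$, and use Proposition~\ref{2.4.1} to see that the normalized mutation $\nu_i(c)$ corresponds to replacing $g_i$ by $g_{i+1}+c'g_i$, so the mutated tuple is again a tuple of polynomial Wronskians and fertile. Your inductive packaging of this, carrying fertility as a strengthened hypothesis, is correct and clean.

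There is one soft spot: to show the genericity locus $U\subset\C^m$ is nonempty you invoke Theorem~\ref{1.4.1}, but that theorem is the same \cite{MV} statement you are proving (for reduced words), so the argument is circular as written. The fix is easy and does not require \cite{MV}: the discriminants of the $y_{\bh,i}(\bc;x)$ and the resultants $\mathrm{Res}_x(y_{\bh,i},y_{\bh,i\pm1})$ are polynomials in $\bc$, so it suffices to exhibit a single $\bc$ at which they do not vanish. Since your flag realization identifies the family $\bc\mapsto\by_\bh(\bc)$ with a family of complete flags in $V=\C[x]_{\le r}$ covering a Zariski-open piece of the flag variety (this is the content of Theorem~\ref{4.7}/\ref{4.8}), it is enough to produce one flag whose associated tuple is generic, e.g.\ by a direct computation with widely separated real roots, or by noting that the non-generic locus is a proper closed subvariety of the flag variety. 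With that adjustment your proof is complete and matches the paper's route.
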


\section{Whitney-Lusztig charts and the comparison theorem}
\label{4}

\subsection{Birational isomorphisms}
\label{4.1}

  Recall the group $N$ from Section \ref{1.1}.  

\vsk.2>
Let $e_{ij}\in \frak{gl}_{r+1}(\BC)$ denote the elementary matrix, 
$$
(e_{ij})_{ab} = \delta_{ia}\delta_{jb}\,.
$$
Define the matrices
\bean
e_i(c) = 1 + c e_{i,i+1} \in N\,,\qquad i=1,\dots,r\,,\ \ c\in \BC\,.  
\label{4.1.0}
\eean
Given a word $\bh= s_{i_m}\ldots s_{i_1}$, define a map 
\bean
\CL_\bh:\ \BC^m \lra N\,, \qquad
(c_1, \ldots c_m) \mapsto e_{i_p}(c_m)\ldots e_{i_1}(c_1)\,.
\label{4.1.1}
\eean

\vsk.2>
Suppose that the word $\bh$ is 
a reduced decomposition of the longest element $w_0$  as in \Ref{1.1.1}. 
Then $\bh$ is of length $q = r(r+1)/2$.
The corresponding map 
\bean
\CL_\bh\,:\, \BC^q \lra N\, 
\label{4.1.3}
\eean
is called the {\it Whitney-Lusztig chart} corresponding to $\bh$. 
The map $\CL_\bh$ is a birational isomorphism.

\begin{rem}
\label{4.1.1}
The map $\CL_\bh$ is {\it not} epimorphic, and  
$N$ is not even equal to the union of the images of $\CL_\bh$ for $\bh\in \Red(w_0)$. 
For example for $r = 2$ the set $\Red(w_0)$ has two elements, 
the words $(121)$ and $(212)$. It is easy to see that the matrices of the 
form $1 + ae_{13}, a\neq 0$, are {\it inaccessible}, 
$$
1 + ae_{13}\ \notin\ \CL_{121}(\BC^3) \cup \CL_{212}(\BC^3)\,.
$$

\end{rem}

\subsection{Comparison Theorem}
\label{4.2}
 Denote $M = \BC[x]^{r+1}$. 
We will write the elements of $M$ as sequences $(f_1, \ldots, f_{r+1})$ but will think of them
as column vectors. 
Then the  algebra $\fgl_{r+1}(\BC)$  of $(r+1)\times (r+1)$-matrices  acts on 
$M$ from the left in the standard way.  
\vsk.2>

We introduce a distinguished element  
\bean
\bb^{(0)} = \Big(1, x, \frac{x^2}2, \ldots, \frac {x^{r}}{r!}\Big).
\label{4.2.1}
\eean

\vsk.2>
For a word $\bh = s_{i_m}\ldots s_{i_1}$   in $S_{r+1}$
and $\bc=(c_1,\dots,c_m)\in\C^m$, define an element
\bea
\bb_\bh(\bc) = (b_{\bh,1}(\bc), \dots, b_{\bh,r+1}(\bc) )\,\in M\,,
\eea
by the formula
\bean
\label{bhc}
\bb_\bh(\bc)  := \CL_\bh(c_1, \ldots c_m)\bb^{(0)}\,.
\eean

Recall the $r$-tuple of polynomials
\bean
\by_\bh(\bc) =
(y_{\bh,1}(\bc;x),\dots,y_{\bh,r}(\bc;x)) = \nu_{i_m}(c_m)\ldots \nu_{i_1}(c_1)\by^{(0)}\,,
\label{3.2.2n} 
\eean
defined in \Ref{3.2.2}.

\begin{thm}[Comparison Theorem]
\label{thm cmp}
For any $j=1,\dots,r$, we have
\bea
\Wr(b_{\bh,1}(\bc;x), \ldots, 
b_{\bh,j}(\bc;x))\, =\, y_{\bh,j}(\bc;x)\,.
\eea

\end{thm}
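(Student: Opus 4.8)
The plan is to induct on the length $m$ of the word $\bh$, proving the slightly sharper statement
\[
W_j := \Wr(b_{\bh,1}(\bc;x),\dots,b_{\bh,j}(\bc;x)) = y_{\bh,j}(\bc;x),\qquad 0\le j\le r+1,
\]
with the conventions $W_0=1$ and $y_{\bh,0}=y_{\bh,r+1}=1$. For the base case $m=0$ we have $\bh=e$, $\bb_e=\bb^{(0)}$ and $\by_e=(1,\dots,1)$, so $W_j=\Wr(1,x,\dots,x^{j-1}/(j-1)!)=1=y_{e,j}$ by Example~\ref{2.3.1}; the boundary value $W_{r+1}=\Wr(\bb^{(0)})=1$ is the same example.

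For the inductive step I strip the leftmost reflection: write $\bh=s_{i_m}\bh'$ with $\bh'=s_{i_{m-1}}\dots s_{i_1}$, $\bc'=(c_1,\dots,c_{m-1})$, and set $p:=i_m$, $c:=c_m$. Put $b'_a:=b_{\bh',a}(\bc';x)$, $y'_a:=y_{\bh',a}(\bc';x)$ and $W'_j:=\Wr(b'_1,\dots,b'_j)$, so that $W'_j=y'_j$ by the inductive hypothesis. Since $\CL_\bh(\bc)=e_p(c)\,\CL_{\bh'}(\bc')$ and $e_p(c)=1+ce_{p,p+1}$ adds $c$ times the $(p{+}1)$-st entry of a column vector to its $p$-th entry, we get $b_{\bh,p}=b'_p+c\,b'_{p+1}$ and $b_{\bh,k}=b'_k$ for $k\ne p$; dually, $\by_\bh(\bc)=\nu_p(c)\by_{\bh'}(\bc')$ changes only the $p$-th slot, $y_{\bh,p}=y'_p+c\,g$, where $g$ is the unique solution of $\Wr(y'_p,g)=y'_{p-1}y'_{p+1}$ with $g=x+\mc O(x^2)$. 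The cases $j\ne p$ are immediate: for $j<p$ none of the first $j$ columns is touched, so $W_j=W'_j=y'_j=y_{\bh,j}$; for $j>p$ the altered column $b_{\bh,p}=b'_p+c\,b'_{p+1}$ sits in a Wronskian that already contains $b'_{p+1}=b_{\bh,p+1}$, so by multilinearity and antisymmetry of $\Wr$ the extra term drops and again $W_j=W'_j=y'_j=y_{\bh,j}$.

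The case $j=p$ is the crux. By multilinearity,
\[
W_p=\Wr(b'_1,\dots,b'_{p-1},b'_p)+c\,\Wr(b'_1,\dots,b'_{p-1},b'_{p+1})=y'_p+c\,\tilde W,\qquad \tilde W:=\Wr(b'_1,\dots,b'_{p-1},b'_{p+1}),
\]
the first summand being $W'_p=y'_p$. It remains to identify $\tilde W=g$. Applying the W5 Identity (Proposition~\ref{2.4.1}) to $f_a=b'_a$ with $a=p-1$, i.e. $A=[p]$ and $B=[p-1]\cup\{p+1\}$ (so $A\cap B=[p-1]$, $A\cup B=[p+1]$), gives $\Wr(W'_p,\tilde W)=W'_{p-1}W'_{p+1}$, that is $\Wr(y'_p,\tilde W)=y'_{p-1}y'_{p+1}$; the boundary instances $p=1$ and $p=r$ use $W'_0=1=y'_0$ and $W'_{r+1}=\det\CL_{\bh'}\cdot\Wr(\bb^{(0)})=1=y'_{r+1}$. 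Thus $\tilde W$ obeys the same normalized Wronskian evolution equation as $g$, and by \Ref{2.1.2} any two solutions differ by a constant multiple of $y'_p$; hence $\tilde W=g+C\,y'_p$. To force $C=0$ I evaluate at $x=0$. Because $\CL_{\bh'}$ is unipotent upper triangular, each $b'_a=x^{a-1}/(a-1)!+\mc O(x^a)$ has lowest exponent $a-1$; the $p$ functions of $\tilde W$ therefore have distinct lowest exponents $\{0,1,\dots,p-2,p\}$, whose sum exceeds that of the consecutive block $\{0,1,\dots,p-1\}$ by exactly $1$, so by the monomial Wronskian count (Example~\ref{2.3.1}) the lowest-order term of $\tilde W$ is a nonzero multiple of $x^1$. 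Thus $\tilde W(0)=0$; since $g(0)=0$ and $y'_p(0)=1$ by \Ref{01c}, we conclude $C=0$, $\tilde W=g$, and $W_p=y'_p+c\,g=y_{\bh,p}(\bc)$, completing the induction.

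The one genuinely substantive step is the case $j=p$: recognizing the ``skip-one'' Wronskian $\tilde W=\Wr(b'_1,\dots,b'_{p-1},b'_{p+1})$ as precisely the bootstrap solution $g$. The W5 Identity is exactly what produces the defining differential equation for $\tilde W$, while the triangularity of $\CL_{\bh'}$ together with the degree count for Wronskians of monomials supplies the normalization $\tilde W(0)=0$ that pins the constant of integration; everything else is bookkeeping with the multilinearity and antisymmetry of the Wronskian.
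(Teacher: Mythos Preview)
Your proof is correct and follows essentially the same route as the paper: the paper deduces the Comparison Theorem from Theorem~\ref{4.8}, which asserts $\fW(e_i(c)\bb)=\nu_i(c)\fW(\bb)$ and is proved exactly as in your $j=p$ step---the W5 Identity yields $\Wr(y'_p,\tilde W)=y'_{p-1}y'_{p+1}$, and the degree/normalization argument (Lemma~\ref{4.6}, formula~\Ref{5.8.1}) pins down $\tilde W$. You have simply folded that intertwining statement into a single induction on the word length rather than isolating it as a separate theorem.
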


\begin{proof}
The theorem is a consequence of a general statement, see Theorem \ref{4.7} below. 
\end{proof}

\subsection{Cell $\CN$}  
\label{4.4}

Denote
\bea
\CN = \mc N_r \,:=\, N\bb^{(0)} \subset M\,,
\eea
the orbit of the element $\bb^{(0)}$ under the action of $N$.

\vsk.2>

An element $\bb = (b_1, \ldots, b_{r+1})\in M$ belongs to $\CN$ if and only if 
\bean
b_i \,=\,  \frac{x^{i-1}}{(i - 1)!} \,+ \, \sum_{j=i}^r b_{ij}x^j,\qquad i=1,\dots,r+1,
\label{4.4.1}
\eean
for some $b_{ij}\in\C$.

\vsk.2>
For any $\bb = (b_1, \ldots, b_{r+1})\in \mc N$ we have
\bean
\Wr(b_1,\dots,b_{r+1}) \,=\, 1\,.
\label{4.4.2}
\eean

\vsk.2>
We define the {\it totally positive subvariety} $\mc N_{>0}\subset\mc N$
as
\bean
\label{psN}
\mc N_{>0}  = N_{>0}\bb^{(0)} \,.
\eean

\subsection{$\mc N$-$\mc Y$ correspondence}
\label{4.5}

Define the  {\it Wronski  map }
\bean
\label{WR}
{W} : \CN \lra \BC[x]^{r}\,,\quad \bb  \mapsto
(b_1, \Wr(b_1,b_2), \Wr(b_1,b_2,b_3),\dots, \Wr(b_1,\dots,b_r))\,.
\eean

\begin{lem}
\label{4.6}

If $\bb=(b_1,\dots,b_{r+1})\in \CN$
and $\by=(y_1,\dots,y_r)=\fW(\bb)$, then
\bean
\label{by i}
y_i = \Wr(b_1, \ldots, b_i) 
&=&
 1 + \mc O(x)\,, \qquad i=1,\dots,r\,,
\\
\label{by ii}
\Wr(b_1, \ldots, b_{i-1}, b_{i+1}) 
&=&
 x +\mc O(x^2)\,, \qquad i=1,\dots,r-1\,,
\eean
as $x\to 0$.
\qed
\end{lem}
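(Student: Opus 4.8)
The plan is to work entirely with Taylor expansions at $x=0$ and to exploit the explicit normal form \Ref{4.4.1} of an element $\bb=(b_1,\dots,b_{r+1})\in\CN$, namely $b_j(x)=\tfrac{x^{j-1}}{(j-1)!}+\mc O(x^j)$. Since each asserted estimate constrains only the first one or two Taylor coefficients of a Wronskian, it will suffice to evaluate the relevant Wronskian determinant at $x=0$ (and, for the second identity, its first derivative there), where the normal form forces a lower-unitriangular structure. The key observation throughout is that, reading off \Ref{4.4.1}, one has $b_j^{(a-1)}(0)=0$ for $a<j$ and $b_j^{(a-1)}(0)=1$ for $a=j$.

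For \Ref{by i} I would consider the matrix $A=(b_j^{(a-1)}(0))_{1\le a,j\le i}$, whose determinant is $y_i(0)=\Wr(b_1,\dots,b_i)(0)$. By the observation above, $A$ is lower triangular with $1$'s on the diagonal, so $\det A=1$. Hence $y_i=1+\mc O(x)$, in agreement with the value $\Wr(1,x,\dots,\tfrac{x^{i-1}}{(i-1)!})=1$ of Example \ref{2.3.1}, which is the leading-order contribution coming from the principal terms of the $b_j$.

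For \Ref{by ii} I set $W(x)=\Wr(b_1,\dots,b_{i-1},b_{i+1})$, an $i\times i$ Wronskian whose columns are indexed by $b_1,\dots,b_{i-1},b_{i+1}$ and whose rows are the derivative orders $0,\dots,i-1$. Because $b_{i+1}=\mc O(x^i)$ while all derivative orders are $\le i-1$, the last column vanishes at $x=0$, giving $W(0)=0$. To extract the linear coefficient I would differentiate the determinant column by column: every summand except the one in which the last column is differentiated still contains the vanishing $b_{i+1}$-column and therefore dies at $x=0$. In the surviving summand the differentiated last column becomes $(b_{i+1}^{(a)}(0))_{a=1}^{i}=(0,\dots,0,1)^{\mathsf T}$, using $b_{i+1}=\tfrac{x^i}{i!}+\mc O(x^{i+1})$, while the first $i-1$ columns retain the lower-unitriangular shape from the previous paragraph. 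The resulting matrix is again lower unitriangular, so $W'(0)=1$ and $W=x+\mc O(x^2)$, as claimed.

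The computations are routine; the one step needing care is the derivative-of-a-determinant argument for \Ref{by ii}, where one must verify that all but one of the $i$ summands vanish at $x=0$ thanks to the high order of vanishing of $b_{i+1}$. This is precisely what pins the leading coefficient to be exactly $1$ rather than an uncontrolled constant, and it is the main (mild) obstacle to a fully clean write-up.
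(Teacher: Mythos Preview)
Your argument is correct. The paper itself offers no proof of Lemma~\ref{4.6} (the statement ends with a bare $\qed$), so there is no approach to compare against; your direct Taylor-expansion computation, using the lower-unitriangular shape of the Wronskian matrix at $x=0$ forced by the normal form \Ref{4.4.1}, is exactly the natural verification the authors are leaving to the reader, and it matches how they later justify the asymptotic $\tty_i=x+\mc O(x^2)$ in the proof of Theorem~\ref{4.8} via formula~\Ref{5.8.1}.
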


We define the {\it Bethe cell} or {\it variety of Bethe $r$-tuples} as
\bean
\CY^{Bethe} := \fW(\CN)\subset \C[x]^r\,,
\label{4.6.2}
\eean
the image of the Wronski map, and  the 
{\it totally positive Bethe subvariety} or {\it positive population}
as 
$$
\CY^{Bethe}_{> 0} := \fW(\CN_{> 0})\subset \fW(\CN) =  \CY^{Bethe}\,.
$$

\begin{thm}
\label{4.7}
The Wronski map
induces an isomorphism
\bea
\fW: \ \CN \iso \CY^{Bethe}\,,
\qquad
\CN_{>0} \iso \CY^{Bethe}_{>0}\,.
\eea
\end{thm}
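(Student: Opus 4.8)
The plan is to show that $\fW:\CN\to\CY^{Bethe}$ is a bijection by constructing an explicit inverse, and then to check that this inverse carries $\CY^{Bethe}_{>0}$ back to $\CN_{>0}$. By definition $\CY^{Bethe}=\fW(\CN)$, so $\fW$ is surjective onto its image; the whole content is injectivity together with the compatibility of the positive parts. The first step is to understand the structure of $\fW$ using the Comparison Theorem \ref{thm cmp}: for $\bb=\CL_\bh(\bc)\bb^{(0)}$ we have $\Wr(b_1,\dots,b_j)=y_{\bh,j}(\bc;x)$, so the Wronski map intertwines the $N$-action (via the Whitney-Lusztig charts $\CL_\bh$) with the normalized mutation process (via $\nu_{i_m}(c_m)\cdots\nu_{i_1}(c_1)$). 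This reduces the statement to reconstructing the column vector $\bb$ from the flag of successive Wronskians $y_i=\Wr(b_1,\dots,b_i)$.

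The key step is to invert the Wronski map directly. Given $\by=(y_1,\dots,y_r)\in\CY^{Bethe}$, set $y_0=y_{r+1}=1$ as usual. The idea is to recover the entries $b_1,\dots,b_{r+1}$ one at a time using the W5 Identity (Proposition \ref{2.4.1}) and the differential equation \Ref{2.1.1}. Indeed $b_1=y_1$ directly, and having recovered $b_1,\dots,b_i$, one determines $b_{i+1}$ from the pair of Wronskians $\Wr(b_1,\dots,b_i)=y_i$ and $\Wr(b_1,\dots,b_{i-1},b_{i+1})$; Example \ref{2.4.2} and its generalization give
$$
\Wr\bigl(\Wr(b_1,\dots,b_i),\,\Wr(b_1,\dots,b_{i-1},b_{i+1})\bigr)=\Wr(b_1,\dots,b_{i-1})\cdot\Wr(b_1,\dots,b_{i+1})=y_{i-1}\,y_{i+1},
$$
so writing $z_i:=\Wr(b_1,\dots,b_{i-1},b_{i+1})$ we get the Wronskian equation $\Wr(y_i,z_i)=y_{i-1}y_{i+1}$, which by Lemma \ref{4.6} has $z_i=x+\mc O(x^2)$ and is therefore uniquely solved by the normalized formula of the Lemma in Section \ref{3.2}, namely $z_i=y_i\int_0^x y_{i-1}y_{i+1}/y_i^2\,du$. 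Once $z_i$ is known, the last column $b_{i+1}$ is recovered from $b_1,\dots,b_i$ and $z_i$ by solving one more linear Wronskian relation (again using \Ref{2.1.2} with the normalization \Ref{4.4.1} that fixes the leading coefficient $x^i/i!$ and pins down the integration constant). This gives a well-defined two-sided inverse to $\fW$, establishing the bijection $\CN\iso\CY^{Bethe}$.

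For the positivity statement I would argue that the inverse constructed above is a \emph{morphism of varieties} (it is built from Wronskians, polynomial division, and the uniquely normalized solutions of the bootstrap equation, all of which are algebraic in the coefficients), and that $\fW$ itself is a morphism. Since $\CN_{>0}=N_{>0}\bb^{(0)}$ and $\CY^{Bethe}_{>0}=\fW(\CN_{>0})$ are defined precisely as images, the restriction $\fW:\CN_{>0}\to\CY^{Bethe}_{>0}$ is automatically surjective; injectivity is inherited from the bijection on the ambient cells. Thus $\fW$ restricts to a bijection $\CN_{>0}\iso\CY^{Bethe}_{>0}$, which is what is asserted.

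The hard part will be the uniqueness and polynomiality in the inversion step: one must confirm that the intermediate functions $z_i$ and the reconstructed columns $b_{i+1}$ really are polynomials (no residues, condition (U) of Section \ref{2.2}) and that the normalization conditions single out a \emph{unique} preimage, so that the W5 chain can be run backwards without ambiguity. The univaluedness criterion of Section \ref{2.2} together with the normalizations $b_i=x^{i-1}/(i-1)!+\cdots$ from \Ref{4.4.1} and the boundary behavior in Lemma \ref{4.6} are exactly the inputs that guarantee this; checking that they suffice at every step of the induction is the crux of the argument.
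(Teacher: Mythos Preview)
Your approach is sound in outline but differs from the paper's and has one technical gap worth flagging.

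\textbf{Comparison with the paper.} The paper does not invert $\fW$ via the W5 identity and Wronskian ODEs. Instead it invokes the \emph{Triangular Theorem} (Theorem~\ref{5.6}): writing $\bb\in\CN$ in terms of its coefficient array $(b_{ij})_{1\le i\le j\le r}$ and the Bethe tuple $\by=\fW(\bb)$ in terms of the coefficients $(a_{ij})$ of the truncated polynomials $y_i^{\le r+1-i}$, one shows that
\[
a_{i,j-i+1}=b_{ij}+\phi_{ij}\bigl((b_{kl})_{k<i}\bigr),\qquad
b_{ij}=a_{i,j-i+1}+\psi_{ij}\bigl((a_{kl})_{k<i}\bigr),
\]
with $\phi_{ij},\psi_{ij}$ polynomials of degree $\ge 2$. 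This is a unitriangular polynomial change of coordinates on $\BC^q$, so $\fW$ is an isomorphism onto its image for free. The positivity statement is then immediate from the definitions, exactly as you observe. Your route---recover $z_i=\Wr(b_1,\dots,b_{i-1},b_{i+1})$ from the normalized bootstrap equation and then solve back for $b_{i+1}$---is a legitimate alternative, closer in spirit to the flag-variety argument behind Theorem~\ref{thm 6.1}. Its advantage is conceptual (it explains \emph{why} the inverse exists in terms of the W5 mechanism); the paper's advantage is that it is purely algebraic and yields explicit polynomial formulas for $\fW^{-1}$.

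\textbf{The gap.} In your last reconstruction step you write ``$b_{i+1}$ is recovered from $b_1,\dots,b_i$ and $z_i$ by solving one more linear Wronskian relation (again using \Ref{2.1.2} \dots)''. But \Ref{2.1.2} solves only the \emph{first-order} equation $\Wr(f,g)=h$. The equation $\Wr(b_1,\dots,b_{i-1},g)=z_i$ is of order $i-1$ in $g$, so for $i\ge 3$ a single quadrature does not suffice. You need either to iterate---e.g.\ factor the operator $g\mapsto \Wr(b_1,\dots,b_{i-1},g)/\Wr(b_1,\dots,b_{i-1})$ as $(\partial-\phi_{i-1})\cdots(\partial-\phi_1)$ with $\phi_j=(\log(y_j/y_{j-1}))'$ and peel off one factor at a time---or to invoke the general existence/uniqueness for linear ODEs with the $i-1$ initial conditions $g(0)=\cdots=g^{(i-2)}(0)=0$ coming from \Ref{4.4.1}. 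Either fix is routine, and the polynomiality you worry about then follows from the univaluedness criterion (U) of Section~\ref{2.2}; but as written the citation of \Ref{2.1.2} does not close the induction.
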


This theorem  is a consequence of the {Triangular Theorem} below, see Section \ref{5.6}.

\begin{thm}
\label{4.8}  
The Lusztig's mutations, i.e. mutiplications by 
$e_i(c)$ on the left are translated by $\fW$ to the Wronskian mutations $\nu_i(c)$ 
on the right:
$$
\fW(e_i(c)\bb) = \nu_i(c)\fW(\bb),\qquad i=1,\dots,r\,.
\eqno{(4.8.1)}
$$
\end{thm}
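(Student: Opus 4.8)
The plan is to compute directly how left multiplication by $e_i(c)$ transforms each entry of the Wronski map $\fW$ and to recognize the result as the normalized mutation $\nu_i(c)$. Since $e_i(c) = 1 + c\,e_{i,i+1}$, multiplication on the left sends $\bb = (b_1, \ldots, b_{r+1})$ to the tuple obtained by replacing $b_i$ with $b_i + c\,b_{i+1}$ and leaving all other entries fixed. I then track the effect on the $j$-th Wronskian $\Wr(b_1, \ldots, b_j)$. For $j < i$ the entry $b_i$ does not occur, so nothing changes; for $j \geq i+1$, multilinearity gives $\Wr(\ldots, b_i + c\,b_{i+1}, b_{i+1}, \ldots) = \Wr(\ldots, b_i, b_{i+1}, \ldots) + c\,\Wr(\ldots, b_{i+1}, b_{i+1}, \ldots)$, and the second term vanishes by skew-symmetry because $b_{i+1}$ is repeated. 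Hence the only Wronskian that changes is the $i$-th one, where multilinearity yields
$$
\Wr(b_1, \ldots, b_{i-1}, b_i + c\,b_{i+1}) = y_i + c\,z_i, \qquad z_i := \Wr(b_1, \ldots, b_{i-1}, b_{i+1}).
$$
(As a determinant of polynomials, $z_i$ is itself a polynomial.) Thus $\fW(e_i(c)\bb)$ agrees with $\fW(\bb)$ in every slot except the $i$-th, where $y_i$ becomes $y_i + c\,z_i$; this already has the exact shape of a normalized mutation, and it remains to identify $z_i$ with $\tty_i$.

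The identification rests on the two defining properties of $\tty_i$. For the Wronskian equation I would apply the W5 Identity of Proposition \ref{2.4.1} with $f_k = b_k$ and $a = i-1$: there $A = [i]$, $B = [i-1]\cup\{i+1\}$, $A\cap B = [i-1]$, $A\cup B = [i+1]$, so the identity reads $\Wr\big(\Wr(b_1, \ldots, b_i),\,\Wr(b_1, \ldots, b_{i-1}, b_{i+1})\big) = \Wr(b_1, \ldots, b_{i-1})\cdot\Wr(b_1, \ldots, b_{i+1})$, that is $\Wr(y_i, z_i) = y_{i-1}\,y_{i+1}$, which is precisely equation \Ref{3.1.1a}. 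For the normalization I would invoke Lemma \ref{4.6}: equation \Ref{by i} gives $y_i = 1 + \mc O(x)$ and equation \Ref{by ii} gives $z_i = x + \mc O(x^2)$ as $x\to 0$, so $z_i$ satisfies the normalizing condition \Ref{3.1.1c}.

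Since $y_i(0) = 1$, the uniqueness lemma of Section \ref{3} guarantees that \Ref{3.1.1a} together with \Ref{3.1.1c} has a unique solution; as $z_i$ satisfies both, $z_i = \tty_i$. Therefore the $i$-th slot of $\fW(e_i(c)\bb)$ equals $y_i + c\,\tty_i = \ty_i(c;x)$, which is exactly the $i$-th entry of $\nu_i(c)\fW(\bb)$ by \Ref{niy}, while all other slots coincide; this is the claimed identity $(4.8.1)$.

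I expect the only genuine subtlety to be the bookkeeping in the W5 application — matching the ordered index sets $A, B$ to the shifted Wronskians $y_{i-1}, y_i, y_{i+1}, z_i$ — together with the boundary cases. For $i = 1$ one uses the empty-set convention $y_0 = \Wr(\emptyset) = 1$, and for $i = r$ one uses $y_{r+1} = \Wr(b_1, \ldots, b_{r+1}) = 1$, which holds by \Ref{4.4.2} and matches the convention $y_{r+1} = 1$. Finally, since the estimate \Ref{by ii} of Lemma \ref{4.6} is stated only for $i \le r-1$, for $i = r$ I would note separately that $z_r = \Wr(b_1, \ldots, b_{r-1}, b_{r+1}) = x + \mc O(x^2)$ follows from the leading-order computation of Example \ref{2.3.1} applied to the leading monomials $1, x, \dots, \tfrac{x^{r-2}}{(r-2)!}, \tfrac{x^r}{r!}$ of $b_1, \ldots, b_{r-1}, b_{r+1}$, whose Wronskian has leading term $x$.
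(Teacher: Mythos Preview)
Your argument is correct and follows essentially the same route as the paper: both proofs compute the effect of $e_i(c)$ on $\bb$, observe that only the $i$-th Wronskian changes to $y_i + c\,\Wr(b_1,\ldots,b_{i-1},b_{i+1})$, invoke the W5 Identity to verify the Wronskian equation \Ref{3.1.1a}, and use the leading-term formula of Example \ref{2.3.1} to check the normalization \Ref{3.1.1c}. The only difference is presentational: the paper works out the cases $i=1,2,3$ for $r=3$ and then declares the general case ``similar'', whereas you carry out the general-$i$ argument directly and take extra care with the boundary cases $i=1$, $i=r$ and the range of \Ref{by ii}.
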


\begin{proof}
The theorem follows from the  W5 Identity.  Let us treat the case $r = 3$. 
We have 
\bea
\bb = (b_1, b_2, b_3, b_4),\quad b_i = \frac{x^{i-1}}{(i-1)!} + \ldots,
\quad
\Wr(\bb) = 1\,, \quad  \fW(\bb) = \by = (y_1, y_2, y_3, 1)\,.
\eea

\smallskip
\noindent
{\it The $i=1$ case.} We have
\bea
e_1(c)\bb 
&=& (b_1 + cb_2,  b_2, b_3, b_4),\qquad  y_1 = b_1\,,
\\
\Wr(y_1, b_2) 
&=& 
\Wr(b_1, b_2) = y_2\,.
\eea
Hence $\tty_1=b_2$ is the solution of the normalized Wronskian equations \Ref{3.1.1a}, \Ref{3.1.1c},
and
\bea
 \nu_1(c)\by = (y_1 + c\tty_1,y_2,y_3) \,=\, W(e_1(c) \by)\,,
\eea
cf. formula \Ref{niy}.

\smallskip
\noindent
{\it The $i=2$ case.}
 We have
$$
e_2(c)\bb = (b_1,  b_2 + cb_3, b_3, b_4).
$$
Denote
$\tty_2 := \Wr(b_1,  b_3)\,.$
Then 
\bea
\tty_2(x) =  x + \mc O(x^2)\,  \quad \on{as}\quad x\to 0\,,
\eea
by formula \Ref{5.8.1}, and
\bea
&&
\Wr(y_2, \tty_2) = \Wr(\Wr(b_1, b_2), \Wr(b_1, b_3)) =
 \Wr(b_1)\Wr(b_1, b_2, b_3) = y_1 y_3\,,
\eea
by the W5 Identity.
Hence $\tty_2$ is the solution of the normalized Wronskian equations \Ref{3.1.1a}, \Ref{3.1.1c},
and
\bea
 \nu_2(c)\by = (y_1, y_2 + c\tty_2,y_3) \,=\, W(e_2(c) \by)\,.
\eea

\smallskip
\noindent
{\it The $i=3$ case.}  We have
$$
e_3(c)\bb = (b_1,  b_2, b_3 + cb_4, b_4)
$$ 
Denote
$
\tty_3 := \Wr(b_1, b_2, b_4)\,.
$
Then 
\bea
\tty_3(x) =  x + \mc O(x^2)\,  \quad \on{as}\quad x\to 0\,,
\eea
by formula \Ref{5.8.1}, and
\bea
\Wr(y_3, \tty_3) = \Wr(\Wr(b_1, b_2, b_3), \Wr(b_1, b_2, b_4)) =
 \Wr(b_1, b_2)\Wr(b_1, b_2, b_3, b_4) = y_2\,,
\eea
by the W5 Identity and formula \Ref{4.4.2}.
Hence $\tty_3$ is the solution of the normalized Wronskian equations \Ref{3.1.1a}, \Ref{3.1.1c},
and
\bea
 \nu_3(c)\by = (y_1, y_2, y_3 + c\tty_3) \,=\, W(e_3(c) \by)\,.
\eea
The case of arbitrary $r$ is similar.  
\end{proof}

\section{Triangular coordinates on the Bethe cell}
\label{5}

\subsection{Example of evolutions, group $\on{SL}_3$\,} 
\label{5.1}

\subsubsection{Wronskian evolution}

We start with $\by^{(0)} = (1, 1)$ and  a reduced decomposition of the longest element in $S_3$,
$$
\bh \,= \,(121)\ :\ w_0 \,=\, s_1s_2s_1\,.
$$
The pair $\by^{(0)}$ evolves by means of the normalized  mutations: 
\bean
\label{5.1.1}
(1, 1)
& \overset{\nu_1(b_1)}
\longrightarrow
&
 (1 + b_1x, 1) \overset{\nu_2(b_2)}\longrightarrow (1 + b_1x, 1 + 
b_2(x +b_1x^2/2))
\\
\notag
&
\overset{\nu_1(b_3)}\longrightarrow 
&
(1 + b_1x + b_3(x + b_2x^2/2), 1 + 
b_2(x +b_1x^2/2))\,. 
\eean
The last pair is\ \
$ \nu_1(b_3)\nu_2(b_2)\nu_1(b_1)\,\by^{(0)}$\,.
\vsk.2>

The second reduced word, $\bh' = (212)$, gives rise to another evolution:
\bean
\label{5.1.1n2}
(1, 1) 
&\overset{\nu_2(c_1)}\longrightarrow& (1, 1 + c_1x) \overset{\nu_1(c_2)}\longrightarrow 
(1 + c_2(x +c_1x^2/2), 1 + c_1x)  
\\
&\overset{\nu_2(c_3)}\longrightarrow&
 (1 + c_2(x +c_1x^2/2), 1 + c_1x + c_3(x + c_2x^2/2)) \,. 
\notag
\eean
 
The change of coordinates on $\CY_3^{Bethe}$ from  $(b_1, b_2, b_3)$ to  $(c_1, c_2, c_3)$ is 
$$
c_2 = b_1 + b_3,\quad  c_1c_2 = b_2b_3, \qquad c_1 + c_3 = b_2, \quad c_2c_3 = b_1b_2\,,
$$
whence
\bean
c_1 = \frac{b_2b_3}{b_1 + b_3}\,,\qquad  c_2 = b_1 + b_3,\qquad c_3 = \frac{b_1b_2}{b_1 + b_3}\,,
\label{5.1.3}
\eean
cf.  \cite[Proposition 2.5]{L}.

\subsubsection{Bethe cell and positive population}

Analyzing formulas \Ref{5.1.1} and \Ref{5.1.1n2}
we observe that the Bethe cell $\mc Y^{Bethe}_3$ consists of the polynomials
$(1+e_1x+e_2/x^2/2,\,1+f_1x+f_2x^2/2)$ such that
\bean
\label{B3c}
e_2+f_2=e_1f_2\,,
\eean
cf. \Ref{eqn}, and the positive population $\mc Y^{Bethe}_{3,\,>0}\subset \mc Y^{Bethe}_3$
is cut from $\mc Y^{Bethe}_3$ by the  inequalities 
\bean
\label{posB3}
e_1,\,e_2,\,f_1,\,f_2\,>\,0\,.
\eean

\subsubsection{Whitney-Lusztig  evolution}

We start with 
\bea
\bb^{(0)}=(b_1,b_2,b_3) =(1, x, x^2/2)
\eea
and act on it  by the
 elementary unipotent matrices 
$$
e_i(t) = 1+ te_{i,i+1}, \qquad i = 1, 2\,,
$$
in the order dictated by the reduced word $\bh = (121)$: 
\bea
(1, x, x^2/2)
&\overset{e_1(t_1)}\longrightarrow&
 (1 + t_1x, x, x^2/2) 
\overset{e_2(t_2)}\longrightarrow (1 + t_1x, x + t_2x^2/2, x^2/2)
\\
&\overset{e_1(t_3)}\longrightarrow &
(1 + t_1x + t_3(x + t_2x^2/2), x + t_2x^2/2, x^2/2)\,.
\eea
The resulting triple is
\bean
\label{5.1.4}
\bb_\bh(\bt) 
&=&
(b_{\bh,1}(\bt;x), b_{\bh,2}(\bt;x), b_{\bh,3}(\bt;x))
\\
\notag
&:=& (1 + t_1x + t_3(x + t_2x^2/2), x + t_2x^2/2, x^2/2)\,,
\eean
cf. Section \ref{4.2}.
One easily checks that applying the Wronski
map $\fW$ to the evolution \Ref{5.1.4} we obtain the evolution \Ref{5.1.1}.

\subsubsection{Remark}

Note a useful  formula 
\bean
\Wr(1 + t_1x, x + t_2x^2/2) = 1 + t_2(x + t_1x^2/2)\,,
\label{5.1.5}
\eean
which could be written symbolically as 
$$
\Wr(e_1(t_1), e_2(t_2)) = e_1(t_2)e_2(t_1)\,.
$$
 In general let 
$$
\bb = (1 + ax + bx^2/2, x + cx^2/2, x^2/2)\,.
$$
Using the formulas 
$
(\Wr(1, x) , \Wr(1, x^2/2), \Wr(x, x^2/2))
=
( 1,  x,  x^2/2)\,,
$
we get
$$
\by:=\fW(\bb) = (1 + ax + bx^2/2, \ 1 + cx + (ac - b)x^2/2)\,.
$$
Thus, the  $2\times 2$-matrix of nontrivial coefficients of $\by$ has the form
$$
\left(\begin{matrix} a & b\\
c & ac - b
\end{matrix}\right).
$$
The variety $\CY^{Bethe}_3$ may be identified with
$$
\{ (a_{ij})\in \fgl_2(\BC)\ |\ a_{11}a_{21} = a_{12} + a_{22}\} \subset \fgl_2(\BC),
$$
an exchange relation familiar in the theory of cluster algebras.
Cf. Example \ref{7.4} below.

\subsection{Example of evolutions, group $\on{SL}_4$} 
\label{5.3}

\subsubsection{Wronskian evolution}
\label{5.3.2}

There are 16 distinct reduced decompositions of the longest element $w_0\in S_4$.
We choose one of them:
$$
\bh = (121321)\,.
$$	 
We start with \ $\by^{(0)} = (1, 1, 1)$
and perform 
the sequence of normalized mutations corresponding to the reduced decomposition $\bh$\,:
\bea
&&(1, 1, 1)
\overset{\nu_1(a_1)}\longrightarrow
 (1 + a_1x, 1, 1)\overset{\nu_2(a_2)}\longrightarrow 
(1 + a_1x, 1 + a_2(x + a_1x^2/2), 1)
\\
&&
\phantom{aaaa}
\overset{\nu_3(a_3)}\longrightarrow 
(1 + a_1x, 1 + a_2(x + a_1x^2/2), 
1 + a_3(x + a_2(x^2/2 + a_1x^3/6)))
\\
&&
\phantom{aa}
\overset{\nu_1(a_4)}\longrightarrow
 (1 + a_1x  + a_4(x + a_2x^2/2), 1 + a_2(x + a_1x^2/2), 
1 + a_3(x + a_2(x^2/2 + a_1x^3/6)))\,.
\eea

To find the next normalized  mutation, $\nu_2(a_5)$, we have to solve an equation
\bean
\label{b_4}
&&
\Wr(1 + a_2(x + a_1x^2/2, x + b_2x^2/2 + b_3x^3/6 + b_4x^4/24) = 
\\
\notag
&&
\phantom{aaa}
\big(1 + a_1x  + a_4(x + a_2x^2/2)\big)\cdot     
\big(1 + a_3(x + a_2(x^2 + a_1x^3/6))\big)\,
\eean
with respect to $b_2,b_3,b_4$.
We calculate inductively the coefficients of $x, x^2, x^3$ in \Ref{b_4} and obtain:
$$
b_2 = a_1 + a_3 + a_4,\quad
b_3 = 2(a_1 + a_4)a_3,\quad 
b_4 = 2a_2a_3a_4\,.
$$
Note that  the  coefficients of $x^4$ and $x^5$ in the left-hand and right-hand sides of \Ref{b_4} should be equal as well,  
but  the corresponding additional equations on $b_2, b_3, b_4$
will be satisfied identically -- these are  incarnations of  the \lq\lq{}Bethe equations\rq\rq{}.

Thus, 
\bea
&&
\nu_2(a_5)\nu_1(a_4)\nu_3(a_3)\nu_2(a_2)\nu_1(a_1)\by^{(0)}
=
(1 + a_1x  + a_4(x + a_2x^2/2), 
\\
&&
\phantom{aa}
1 + a_2(x + a_1x^2/2) + a_5(x + (a_1 + a_3 + a_4)x^2/2 + 2(a_1 + a_4)a_3x^3/6 + 
 2a_2a_3a_4 x^4/24), 
\\
&&
\phantom{aaaa}
1 + a_3(x + a_2(x^2/2 + a_1x^3/6)))\,.
\eea
Finally, to apply $\nu_1(a_6)$ to this 3-tuple,  we have to find $c_2, c_3$ from the equation
\bea
&&
\Wr(1 + a_1x  + a_4(x + a_2x^2/2), x + c_2x^2/2  + c_3x^6/6) 
\\
&&
\phantom{a}
=
1 + a_2(x + a_1x^2/2) + a_5(x + (a_1 + a_3 + a_4)x^2/2 + 2(a_1 + a_4)a_3x^3/6 + 
 2a_2a_3a_4 x^4/24\,.
\eea
We find $c_2, c_3$ by equating the coefficients of $x, x^2$:
$$
c_2 = a_2 + a_5,\qquad c_3 = a_3a_5\,.
$$
Then the  coefficients of $x^3, x^4$ will be equal as well by the \lq\lq{}Bethe equations\rq\rq{}.
Thus, 
\bea
&&
\by_\bh(\ba) = (y_{\bh,1}(\ba;x), y_{\bh,2}(\ba;x), y_{\bh,3}(\ba;x))
=\nu_1(a_6)\nu_2(a_5)\nu_1(a_4)\nu_3(a_3)\nu_2(a_2)\nu_1(a_1)\by^{(0)} 
\\
&&
\notag
= (1 + a_1x  + a_4(x + a_2x^2/2) + a_6(x + (a_2 + a_5)x^2/2 + 
a_3a_5 x^3/6), 
\\
&&
\notag
1 + a_2(x + a_1x^2/2) + a_5(x + (a_1 + a_3 + a_4)x^2/2 + 2(a_1 + a_4)a_3x^3/6 + 
 2a_2a_3a_4 x^4/24), 
\\
\notag
&&
1 + a_3(x + a_2(x^2/2 + a_1x^3/6)))\,,
\eea
cf. formula \Ref{3.2.2}.

\subsubsection{Lusztig evolution}
\label{5.3.3}
Consider the Lusztig evolution
corresponding to the same word $\bh = (121321)$: 
\bea
&&
\bb^{(0)}= (1, x, x^2/2, x^3/6) \overset{e_1(a_1)}\longrightarrow 
(1 + a_1x, x, x^2/2, x^3/6) 
\\
&&
\overset{e_2(a_2)}\longrightarrow 
(1 + a_1x,  x +  a_2x^2/2, x^2/2 , x^3/6) \overset{e_3(a_3)}\longrightarrow 
(1 + a_1x, x +  a_2x^2/2, x^2/2 + a_3x^3/6, x^3/6)  
\\
&&
\overset{e_1(a_4)}\longrightarrow
(1 + a_1x + a_4(x +  a_2x^2/2), x^2/2 + a_3x^3/6, x^3/6)
\\
&&
\overset{e_2(a_5)}\longrightarrow
(1 + a_1x + a_4(1 +  a_2x^2/2), x +  a_2x^2/2 + a_5(x^2/2 + a_3x^3/6), x^2/2 + a_3x^3/6, 
x^3/6)
\\
&&
\overset{e_1(a_6)}\longrightarrow
(1 + a_1x + a_4(1 +  a_2x^2/2) + a_6(x +  a_2x^2/2 + a_5(x^2/2 + a_3x^3/6)), 
\\
&&
\phantom{aaaaaaa}
x +  a_2x^2/2 + a_5(x^2/2 + a_3x^3/6), x^2/2 + a_3x^3/6, 
x^3/6)\,
\\
&&
=
\phantom{aa}
\bb_\bh(\ba) = (b_{\bh,1}(\ba;x), b_{\bh,2}(\ba;x), b_{\bh,3}(\ba;x), b_{\bh,4}(\ba;x))\,,
\eea
cf. formula \Ref{bhc}.

\subsubsection{Comparison}
\label{5.3.4}

We have
\bea
\by_\bh(\ba)=W(\bb_\bh(\ba))\,.
\eea
Namely,
\bea
y_{\bh,1}(\ba;x) &=& b_{\bh,1}(\ba;x)\,,
\\
y_{\bh,2}(\ba;x) &=& \Wr(b_{\bh,1}(\ba;x), b_{\bh,2}(\ba;x))\,,
\\
y_{\bh,3}(\ba;x) &=& \Wr(b_{\bh,1}(\ba;x), b_{\bh,2}(\ba;x), b_{\bh,3}(\ba;x))\,.
\eea

\subsubsection{Wronskian map and the minors of $g$}
\label{5.44}
 Let 
\bean
\label{bb54}
\bb = (1 + a_1x + a_2x^2/2 + a_3x^3/6, x + b_2x^2/2 + b_3x^3/6, 
x^2/2 + c_3x^3/6) \,\in \mc N
\eean
and
\bean
g = \left(\begin{matrix} 1 & a_1 & a_2 & a_3\\
0 & 1 & b_2 & b_3\\
0 & 0 & 1 & c_3\\
0 & 0 & 0 & 1
\end{matrix}\right) \in N\,.
\label{5.4.9}
\eean
Then  $\bb\, =\, g \bb^{(0)}$.

 Let us compute $\by =(y_1,y_2,y_3) = \fW(\bb)$ .
Clearly, 
\bean
\label{yi3}
y_1 &=& 1 + a_1x + a_2x^2/2 + a_3x^3/6\,,
\\
\notag
y_2 &=& 1 + \left|\begin{matrix} 1 & a_2\\ 0 & b_2 \end{matrix}\right| x +  
\biggl(\left|\begin{matrix} 1 & a_3\\ 0 & b_3 \end{matrix}\right| + 
\left|\begin{matrix} a_1 & a_2\\ 1 & b_2 \end{matrix}\right|\biggr) x^2/2 
\\
\notag
&&
\phantom{a}
+ \left|\begin{matrix} a_1 & a_3\\ 1 & b_3 \end{matrix}\right| x^3/3 + 
\left|\begin{matrix} a_2 & a_3\\ b_2 & b_3 \end{matrix}\right| x^4/12\,,
\\
\notag
y_3 &=& 1 + \left|\begin{matrix} 1 & a_1 & a_3\\ 0 & 1 & b_3\\ 0 & 0 & c_3 \end{matrix}\right| x + 
\left|\begin{matrix} 1 & a_2 & a_3\\ 0 & b_2 & b_3\\ 0 & 1 & c_3 \end{matrix}\right| x^2/2 + 
\left|\begin{matrix} a_1 & a_2 & a_3\\ 1 & b_2 & b_3\\ 0 & 1 & c_3 \end{matrix}\right| x^3/6\,.
\eean
Notice that all these determinants are minors of the matrix $g$. In particular, if the matrix $g$ 
is totally positive, then all these determinants are positive.

\subsubsection{Wronskian map in matrix form}
By formula \Ref{yi3}\,:
\bea
y_1 &=& 1 + a_1x + a_2x^2/2 + a_3x^3/6\,,
\\
y_2 &=& 1 + b_2 x + (b_3 + a_1b_2 - a_2)x^2/2 + (a_1b_3 - a_3)x^3/3 + 
(a_2b_3 - a_3b_2)x^4/12\,,
\\
y_3 &=& 1 + c_3x + (b_2c_3 - b_3) x^2/2 + (a_1(b_2c_3 - b_3) - (a_2c_3 - a_3)) x^3/6\,.
\eea

These formulas show that the inverse map
$$
\fW^{-1}:\ \CY^{Bethe} \iso \CN
$$
assigns to a triple $\by=(y_1,y_2,y_3)$ with
\bea
y_1
&=&
1 + \alpha_1 x + \alpha_2 x^2/2 + \alpha_3 x^3/6\,, 
\\
y_2
&=&
1 + \beta_2 x + \beta_3 x^2/2 + \beta_4 x^3/6 + \beta_5 x^4/24\,,  
\\
y_3
&=&
1 + \gamma_3 x + \gamma_4  x^2/2 + \gamma_5 x^3/6\,,
\eea
the triple $\bb = \fW^{-1}(\by)$, as in \Ref{bb54},  with
\bean
\label{5.4.6}
&&
a_1 = \alpha_1, \qquad a_2 = \alpha_2, \qquad a_3 = \alpha_3, 
\\
\notag
&&
\phantom{aaaaaaaaaaa}
b_2 = \beta_2, \qquad  b_3 = \beta_3 - (\alpha_1\beta_2 -  \alpha_2), 
\\
\notag
&&
\phantom{aaaaaaaaaaaaaaaaaaaaaa}
c_3 = \gamma_3\,.
\eean
In a matrix form, the map $\fW$ is given by the formula
\bean
\label{5.4.7}
&&
\left(\begin{matrix} 1 & \alpha_1 & \alpha_2 & \alpha_3 & 0 & 0 \\
0 & 1 &  \beta_2 &  \beta_3 & \beta_4 & \beta_5\\
0 & 0 & 1 & \gamma_3 & \gamma_4 & \gamma_5 
\end{matrix}\right) 
\\
\notag
&&
=
\left(\begin{matrix} 1 & a_1 & a_2 & a_3 & 0 & 0 \\
0 & 1 &  b_2 &  b_3 + (a_1b_2 - a_2) & 2(a_1b_3 - a_3) & 2(a_2b_3 - a_3b_2)\\
0 & 0 & 1 & c_3 & b_2c_3 - b_3 & a_1(b_2c_3 -  b_3) - (a_2c_3 - a_3)
\end{matrix}\right),
\eean
and the inverse map $\fW^{-1}$ is given by the formula
\bean
\label{5.4.8}
\left(\begin{matrix} 1 & a_1 & a_2 & a_3\\
0 & 1 & b_2 & b_3\\
0 & 0 & 1 & c_3\\
\end{matrix}\right) = \left(\begin{matrix} 1 & \alpha_1 & \alpha_2 & \alpha_3 \\
0 & 1 &  \beta_2 &  \beta_3 - (\alpha_1\beta_2 -  \alpha_2) \\
0 & 0 & 1 & \gamma_3
\end{matrix}\right)\,.
\eean

\begin{thm}
\label{thm 3 3}
The coefficients $\al_1,\al_2,\al_3,\beta_2,\beta_3,\ga_3$ of the polynomials $y_1,y_2,y_3$ serve as global coordinates
on the Bethe cell $\mc Y^{Bethe}_4$, the other coefficients $\beta_4,\beta_5,\ga_4$ are 
polynomial functions of the global coordinates due to  formulas
\Ref{5.4.7} and \Ref{5.4.8}.
\qed
\end{thm}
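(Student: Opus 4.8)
The plan is to read everything off the explicit description of $\fW$ for $\on{SL}_4$ already obtained in \Ref{yi3}, \Ref{5.4.7} and \Ref{5.4.8}, and to combine it with the isomorphism of Theorem \ref{4.7}.

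First I would invoke Theorem \ref{4.7}: the Wronski map gives an isomorphism $\fW\colon \CN \iso \mc Y^{Bethe}_4$. Since $\CN = N\bb^{(0)}$ is parametrized by the entries $a_1,a_2,a_3,b_2,b_3,c_3$ of the matrix $g\in N$ in \Ref{5.4.9}, the cell $\CN$ is isomorphic to $\C^6$. Hence establishing that six coefficients of $\by=\fW(\bb)$ are global coordinates on $\mc Y^{Bethe}_4$ amounts to checking that their $\fW$-pullbacks form an invertible polynomial change of the coordinates $a_1,a_2,a_3,b_2,b_3,c_3$ on $\C^6$.

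Next I would read off from \Ref{5.4.7} the $\fW$-pullbacks of the six candidate coefficients:
$$
\al_1=a_1,\quad \al_2=a_2,\quad \al_3=a_3,\quad \beta_2=b_2,\quad \beta_3=b_3+(a_1b_2-a_2),\quad \ga_3=c_3\,.
$$
This is a \emph{triangular} change of variables: five of the new coordinates coincide with old ones, while the sixth, $\beta_3$, differs from $b_3$ only by the monomial $a_1b_2-a_2$ built from already-isolated coordinates. It is therefore a polynomial automorphism of $\C^6$, whose inverse is recorded in \Ref{5.4.8}, namely $b_3=\beta_3-(\al_1\beta_2-\al_2)$ with the other five coordinates unchanged. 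Consequently $\al_1,\al_2,\al_3,\beta_2,\beta_3,\ga_3$ form a global coordinate system on $\mc Y^{Bethe}_4$.

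Finally, for the dependent coefficients I would substitute the inverse \Ref{5.4.8} into the last three columns of \Ref{5.4.7}, which present $\beta_4,\beta_5,\ga_4,\ga_5$ as $\beta_4=2(a_1b_3-a_3)$, $\beta_5=2(a_2b_3-a_3b_2)$, $\ga_4=b_2c_3-b_3$ and $\ga_5=a_1(b_2c_3-b_3)-(a_2c_3-a_3)$. After the substitution each becomes an explicit polynomial in $\al_1,\al_2,\al_3,\beta_2,\beta_3,\ga_3$; for instance $\ga_4=\beta_2\ga_3-\beta_3+(\al_1\beta_2-\al_2)$. I do not expect any real obstacle here: granting the Wronskian/minor computation recorded in \Ref{yi3}, the theorem is a bookkeeping consequence of the triangularity of \Ref{5.4.7}, the only point worth verifying being that the transition is polynomially invertible, which is immediate from its unipotent-triangular shape.
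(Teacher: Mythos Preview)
Your proof is correct and follows the paper's approach: the theorem is stated with a bare \qed, meaning the paper regards it as an immediate consequence of formulas \Ref{5.4.7} and \Ref{5.4.8}, exactly as you argue. One small remark: your invocation of Theorem \ref{4.7} is harmless but slightly redundant here, since the explicit triangular formulas \Ref{5.4.7} and \Ref{5.4.8} already establish, for this particular $r=3$, that $\fW$ is a polynomial isomorphism onto its image; you do not need the general isomorphism statement (whose proof in the paper is deferred to the Triangular Theorem \ref{5.6}, of which the present theorem is the $r=3$ prototype).
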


\begin{thm}
\label{5.4.1}

The  positive population $\CY^{Bethe}_{4,\,> 0} \subset \mc Y^{Bethe}_4$ is cut from $\mc Y^{Bethe}_4$
by the inequalities
\bean
\on{all}\quad \alpha_i, \beta_i, \gamma_i > 0
\qquad\on{and}\qquad
\beta_3 >  \alpha_1\beta_2 - \alpha_2 > 0\,.
\label{5.4.18}
\eean
\end{thm}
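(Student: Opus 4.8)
The plan is to transport the statement to the group $N=N_4$ through the isomorphism $\fW$ and then reduce it to a finite, explicit positivity check among minors. By Theorem \ref{4.7} the Wronski map restricts to an isomorphism $\CN_{>0}\iso\CY^{Bethe}_{4,>0}$, and since $\bb\mapsto g$ (with $\bb=g\,\bb^{(0)}$) identifies $\CN$ with $N$, a triple $\by$ lies in $\CY^{Bethe}_{4,>0}$ if and only if the matrix $g=\fW^{-1}(\by)$ of \Ref{5.4.9} is totally positive. By the definition recalled in Section \ref{1.2}, $g\in N_{>0}$ means precisely that every minor of $g$ that is not identically zero on $N$ is strictly positive; so everything comes down to expressing these minors in the coordinates $\alpha_i,\beta_i,\gamma_i$ and matching them against the inequalities \Ref{5.4.18}.

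First I would enumerate the minors $\Delta^I_J$ of $g$ that are not identically zero, i.e.\ those with $i_l\le j_l$ for all $l$; there are finitely many, and modulo the trivial ones (equal to $1$) they take the values
$$
a_1,\,a_2,\,a_3,\,b_2,\,b_3,\,c_3,\ a_1b_2-a_2,\ a_1b_3-a_3,\ a_2b_3-a_3b_2,\ a_2c_3-a_3,\ b_2c_3-b_3,\ a_1c_3,\ \gamma_5,
$$
where $\gamma_5=a_1(b_2c_3-b_3)-a_2c_3+a_3$. The content of \Ref{yi3} and \Ref{5.4.6}--\Ref{5.4.8} is exactly that the coefficients $\alpha_i,\beta_i,\gamma_i$ are, up to harmless factors, such minors: one reads off $\alpha_i=a_i$, $\beta_2=b_2=\Delta^{12}_{13}$, $\beta_4=2(a_1b_3-a_3)$, $\beta_5=2(a_2b_3-a_3b_2)$, $\gamma_3=c_3$, $\gamma_4=b_2c_3-b_3$, $\gamma_5=\Delta^{123}_{234}$, while $\beta_3=(a_1b_2-a_2)+b_3$ is a sum of two minors. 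This is what makes the refined inequality in \Ref{5.4.18} the natural one: $\alpha_1\beta_2-\alpha_2=a_1b_2-a_2=\Delta^{12}_{23}$ and $\beta_3-(\alpha_1\beta_2-\alpha_2)=b_3=\Delta^{12}_{14}$, so demanding $\beta_3>\alpha_1\beta_2-\alpha_2>0$ is precisely demanding that both constituent minors be positive, which positivity of the single coefficient $\beta_3$ would not guarantee.

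With this dictionary the two inclusions are short. If $g\in N_{>0}$ then all displayed minors are positive, hence every inequality of \Ref{5.4.18} holds; this is the trivial direction. For the converse I would assume \Ref{5.4.18} and verify that every relevant minor is positive. All of them occur verbatim among the hypotheses except two: $a_1c_3=\Delta^{13}_{24}$, positive simply because $a_1,c_3>0$, and $a_2c_3-a_3=\Delta^{13}_{34}$. The latter is the one genuine step: from $\beta_5>0$ one gets $a_2b_3>a_3b_2$, hence $a_3/a_2<b_3/b_2$, while $\gamma_4>0$ gives $c_3>b_3/b_2$; chaining these yields $c_3>b_3/b_2>a_3/a_2$ and therefore $a_2c_3-a_3>0$. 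Once this is established every non-identically-zero minor of $g$ is positive, so $g\in N_{>0}$ and $\by\in\CY^{Bethe}_{4,>0}$, which finishes the argument.

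I expect the only real obstacle to be bookkeeping: one must be certain that the enumeration of non-identically-zero minors is complete (so that positivity of all of them genuinely amounts to total positivity) and that each is correctly evaluated via \Ref{yi3} and \Ref{5.4.7}. The single substantive inequality---deducing $a_2c_3-a_3>0$ from the two cluster-type relations $a_2b_3-a_3b_2>0$ and $b_2c_3-b_3>0$---is the \emph{crux}, and it is exactly the three-term positivity propagation one expects in total-positivity arguments.
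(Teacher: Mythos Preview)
Your argument is correct and follows essentially the same route as the paper, which simply cites formulas \Ref{yi3}, \Ref{5.4.7}, \Ref{5.4.8}; you have merely spelled out in full the minor enumeration and the positivity chain $a_2c_3-a_3>0$ that those formulas encode. One small remark: rather than invoking Theorem~\ref{4.7} (whose proof is deferred to the Triangular Theorem), you may appeal directly to the explicit inverse \Ref{5.4.8}, which already shows in this $\on{SL}_4$ case that $\fW$ is a bijection and hence that $\by\in\CY^{Bethe}_{4,>0}$ iff the matrix $g$ of \Ref{5.4.9} lies in $N_{>0}$.
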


\begin{proof}
The proof follows from \Ref{yi3}, \Ref{5.4.7}, \Ref{5.4.8}.
\end{proof}

\subsection{Triangular coordinates}
\label{5.5}

For an arbitrary $r$, consider an element 
$\bb = (b_1, \ldots, b_{r+1})\in \CN$\,,
where
\bea
b_i \,=\,  x^{i-1}/(i - 1)! \,+ \, \sum_{j=i}^r b_{ij}x^j,\qquad i=1,\dots,r+1\,.
\eea
Denote by 
$$
M(\bb) =(b_{ij})_{1\leq i\leq j\leq r}
$$
the triangular array of nontrivial coefficients.

Let $\by$ be the corresponding Bethe tuple,
$$
\fW(\bb) = \by = (y_1, \ldots, y_r)\in \CY^{Bethe} ,
$$
with 
$$
y_i(x) = 1 + \sum_{j\geq 1} a_{ij} x^j/j!\,.
$$
Define {\it the triangular part $\by^{\triangle}$ of $\by$},
$$
\by^{\triangle} = (y_1^{\leq r}, y_2^{\leq r-1},\ldots, y_r^{\leq 1})\,.
$$
Here for a polynomial $f(x) = \sum_{i\geq 0} a_ix^i/i! \in \BC[x]$ we use the notation 
$$
f^{\leq n}(x) =  \sum_{i = 0}^n a_ix^i/i!\,.
$$
Denote
$$
A(\by^\triangle) = (a_{ij})_{1\leq i\leq  r; 1\leq j \leq r + 1 - i}\,,
$$
the triangular array of the nontrivial
coefficients of $\by^{\triangle}$; thus we take all $r$ nontrivial coefficients of $y_1(x)$; the first $r - 1$ nontrivial coefficients of $y_2(x)$, etc.

The following statement describes the relationship between
the two arrays  $M(\bb)$ and $A(\by)$.

\begin{thm}[Triangular Theorem]
\label{5.6}

${}$

\begin{enumerate}
\item[(i)]  

For all $1 \leq i\leq j \leq r$, we have
\bean
\label{ab}
a_{i,j-i+1} = b_{ij} + \phi_{ij}((b_{kl})_{k < i})\,,
\eean
where $\phi_{ij}$ is  a polynomial with all monomials of degree at least 2.

\item[(ii)] 

 Conversely, for all $1 \leq i\leq j \leq r$, we have
\bean
\label{ba}
b_{ij} = a_{i,j-i+1} + \psi_{ij}((a_{kl})_{k < i})\,,
\eean
where  $\psi_{ij}$ is  a polynomial with all monomials of degree at least 2.

\item[(iii)]
  The map
$$
\CY^{Bethe} \lra \BC^q,\quad \by \mapsto  A(\by^\triangle)\,,
$$
is an isomorphism.

\item[(iv)] The Wronski map 
$$
\fW: \CN \lra \CY^{Bethe}
$$
is an isomorphism.
\end{enumerate}

\end{thm}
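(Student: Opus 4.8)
The plan is to isolate part~(i) as the only substantive step and to deduce (ii)--(iv) from it formally. Set $q=r(r+1)/2$ and introduce three maps: the coordinate map $M\colon\CN\to\BC^q$, $\bb\mapsto(b_{ij})_{1\le i\le j\le r}$, which is an isomorphism because \Ref{4.4.1} exhibits the $b_{ij}$ as free coordinates on $\CN$; the Wronski map $\fW\colon\CN\to\CY^{Bethe}$ of \Ref{WR}, which is surjective by the definition $\CY^{Bethe}=\fW(\CN)$; and the truncation map $A^\triangle\colon\CY^{Bethe}\to\BC^q$, $\by\mapsto A(\by^\triangle)$, which is a morphism since it merely reads off coefficients. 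The content of \Ref{ab} is exactly that $A^\triangle\circ\fW=T\circ M$, where $T\colon\BC^q\to\BC^q$ sends $(b_{ij})$ to $(b_{ij}+\phi_{ij})$ and is triangular with unit diagonal.

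To prove (i) I would expand $y_i=\Wr(b_1,\dots,b_i)$ using the two properties of the Wronskian from Section~\ref{2.3}: it is polylinear and skew-symmetric, and on monomials it is evaluated by \Ref{5.8.1}. Working in the divided-power normalization of $\bb^{(0)}$, write $b_k=x^{k-1}/(k-1)!+\sum_{l\ge k}b_{kl}\,x^l/l!$; then polylinearity turns $y_i$ into a sum of monomials $\prod_{k\in S}b_{k,l_k}$ indexed by subsets $S\subseteq\{1,\dots,i\}$ of \emph{distinct} rows, each weighted by a monomial Wronskian. By \Ref{5.8.1} such a Wronskian is nonzero only when all exponents are distinct, and then it is a single power of $x$ whose exponent equals $\sum_{k\in S}(l_k-k+1)$. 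Assigning to $b_{kl}$ the weight $l-k+1\ge1$, this says the coefficient of $x^m$ in $y_i$ is a sum of monomials of total weight exactly $m$. Since $b_{ij}$ with $j=i+m-1$ already has weight $m$, the only monomial containing it is $b_{ij}$ itself ($S=\{i\}$, $l_i=j$), and the direct evaluation $\Wr(1,x,\dots,x^{i-2}/(i-2)!,\,x^{j}/j!)=x^{m}/m!$ shows it enters $a_{i,m}$ with coefficient exactly $1$; this is the diagonal term of \Ref{ab}. Every other weight-$m$ monomial either omits row~$i$ or carries a lower exponent $l_i<j$ there, so all its factors precede $(i,j)$ lexicographically; hence $\phi_{ij}$ involves only strictly earlier coefficients and $T$ is unitriangular. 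An alternative, more structural route runs by induction on $i$, peeling off the last argument and invoking the W5 Identity, Proposition~\ref{2.4.1}, exactly as in the proof of Theorem~\ref{4.8}.

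Granting (i), a triangular polynomial map with unit diagonal is a polynomial automorphism, and its inverse is found by solving \Ref{ab} for the $b_{ij}$ in increasing order of weight: at weight~$1$ one reads off $b_{i,i}=a_{i,1}$, and at each higher weight the correction involves only already-determined coefficients, giving \Ref{ba} and proving (ii). Parts (iii) and (iv) are then formal. From $A^\triangle\circ\fW=T\circ M$ the left side is an isomorphism $\CN\to\BC^q$, being a composite of the isomorphisms $M$ and $T$. As $\fW$ is surjective and $A^\triangle\circ\fW$ is injective, $\fW$ is a bijection whose inverse $M^{-1}\circ T^{-1}\circ A^\triangle$ is a morphism, which is (iv). Finally $A^\triangle=T\circ M\circ\fW^{-1}$ is a composite of isomorphisms, hence an isomorphism $\CY^{Bethe}\to\BC^q$, which is (iii).

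The one place demanding genuine work is the triangularity in (i): everything rests on the weight count, i.e.\ on the fact from \Ref{5.8.1} that a monomial Wronskian vanishes unless its exponents are distinct and otherwise occupies a single prescribed power of $x$. This is what pins each $b_{kl}$ to a definite weight and forbids any correction of weight $\ge m$ from reaching $a_{i,m}$, while the divided-power normalization of $\bb^{(0)}$ is what forces the diagonal coefficient to be $1$ rather than a factorial factor. Once this bookkeeping is secured, the unipotent-triangular structure---and with it all four assertions---follows with no further analysis.
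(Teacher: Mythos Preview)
Your proposal is correct and matches the paper's approach: the paper's own proof merely states that (ii)--(iv) follow from (i) and leaves (i) to the reader with a pointer to the explicit formulas \Ref{5.4.7}--\Ref{5.4.8}, so your computation via polylinearity and the monomial Wronskian formula \Ref{5.8.1} is exactly the expected filling-in, and your formal deduction of (ii)--(iv) from the unitriangularity is the intended one. One small point worth recording: the triangularity you actually establish is with respect to the lexicographic order on pairs $(i,j)$ rather than the stronger ``rows $k<i$ only'' stated in (i), and $\phi_{ij}$ can contain linear terms---already in the $\on{SL}_4$ example one has $\phi_{2,3}=b_{11}b_{22}-b_{12}$, which involves $b_{22}$ from row $i=2$ and a degree-one term---but this lex-triangularity is what is genuinely true and is all that is needed for the polynomial inversion giving (ii)--(iv).
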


\begin{proof}
All statements are corollaries of statement (i). 
We leave a proof of (i) to the reader, cf. formulas \Ref{5.4.7} and \Ref{5.4.8}.
\end{proof}

\begin{cor}
\label{cor tr coor}
The coefficients $(a_{ij})_{1\leq i\leq  r; 1\leq j \leq r + 1 - i}$
 of the polynomials $\by=(y_1,\dots,y_r)$ serve as global coordinates
on the Bethe cell $\mc Y^{Bethe}_r$, the other coefficients of $\by$
 are 
polynomial functions of the global coordinates due to  formulas
\Ref{ab} and \Ref{ba}.

\end{cor}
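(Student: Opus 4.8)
The plan is to deduce the corollary directly from the Triangular Theorem~\ref{5.6}, whose four parts already contain all the information needed; what remains is to assemble them while keeping careful track of which coefficients are involved. I would split the statement into two assertions: (A) the triangular coefficients $(a_{ij})_{1\le i\le r,\ 1\le j\le r+1-i}$ are global coordinates on $\CY^{Bethe}$, and (B) every other nontrivial coefficient of $\by$ is a polynomial in these.

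For (A), I would first observe that the matrix entries $b_{ij}$ are manifestly global coordinates on $\CN=N\bb^{(0)}\cong\BC^q$. By Theorem~\ref{4.7} (equivalently Theorem~\ref{5.6}(iv)) the Wronski map $\fW\colon\CN\iso\CY^{Bethe}$ is an isomorphism, so the functions $b_{ij}\circ\fW^{-1}$ are global coordinates on $\CY^{Bethe}$. Formulas \Ref{ab} and \Ref{ba} then exhibit the passage between the array $(b_{ij})_{1\le i\le j\le r}$ and the triangular array $(a_{i,\,j-i+1})_{1\le i\le j\le r}$ as a polynomial change of variables that is triangular and unipotent in the row index $i$: each new variable equals the old one plus a polynomial in variables of strictly smaller row index. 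Such a change of variables is a polynomial automorphism of $\BC^q$, so the triangular $a$'s are global coordinates as well, which is precisely Theorem~\ref{5.6}(iii).

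For (B), I would use that $y_i=\Wr(b_1,\dots,b_i)$ is the determinant of the matrix of derivatives of $b_1,\dots,b_i$, so every coefficient in $x$ of $y_i$ --- in particular every nontriangular coefficient $a_{ij}$ with $j>r+1-i$ --- is a polynomial in the coefficients $b_{kl}$ of the $b_k$. Substituting formula \Ref{ba}, which rewrites each $b_{kl}$ as a polynomial in the triangular coordinates, then expresses each such $a_{ij}$ as a polynomial in the global coordinates of (A). The step I would treat as the main obstacle is the verification that this substitution stays inside the triangular coordinates: one must check that the arguments of the polynomials $\psi_{kl}$ in \Ref{ba} are triangular $a$'s (those of row index $<k$), so that no non-global coefficient survives. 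This closure is exactly the triangular, unipotent structure asserted in Theorem~\ref{5.6}(i)--(ii); granting that theorem the remaining verification is pure bookkeeping, and the only genuinely substantive input --- the triangularity of \Ref{ab} --- is the one part of Theorem~\ref{5.6} whose own proof was deferred.
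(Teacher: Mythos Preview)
Your proposal is correct and follows the same route as the paper: the corollary is stated immediately after Theorem~\ref{5.6} without a separate proof, the statement itself pointing to formulas \Ref{ab} and \Ref{ba}, and you have simply spelled out in detail how parts (i)--(iv) of that theorem combine to give both assertions. If anything you have done more work than required, since part (iii) of Theorem~\ref{5.6} already \emph{is} the assertion that the triangular $a_{ij}$'s are global coordinates, and the polynomial dependence of the remaining coefficients on them then follows at once from (iv) and \Ref{ba} by the Wronskian-expansion argument you give.
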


\subsection{Polynomials $\phi_{ij}$}
\label{5.7}

 The next statement gives  information on polynomials $\phi_{ij}(\bb)$.
Let $\bb^{(0)}$ be given by \Ref{4.2.1}. 
An arbitrary $\bb\in \CB$ has the form $\bb=g\bb^{(0)}$ for some unique $g\in N$.

\begin{thm}
\label{5.8}

The  polynomials $\phi_{ij}(\bb)$ are linear combinations, 
with strictly positive coefficients, of some minors of the matrix $g$.
Consequently, if $g$ is totally positive, then all the polynomials 
$y_i$ of the tuple $\fW(\bb)=(y_1,\dots,y_r)$ have 
strictly positive coefficients. 
\end{thm}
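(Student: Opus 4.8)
The plan is to expand each component $y_i = \Wr(b_1, \ldots, b_i)$ of $\fW(\bb)$ directly in terms of the matrix $g$, where $\bb = g\,\bb^{(0)}$, by means of the Cauchy--Binet formula, and then to read off positivity from the classical Wronskian-of-monomials formula \Ref{5.8.1}. Write $e_k(x) = x^{k-1}/(k-1)!$, so that $\bb^{(0)} = (e_1, \ldots, e_{r+1})$ and, since $g \in N$ acts on column vectors, $b_a = \sum_{k=1}^{r+1} g_{ak}\, e_k$ for $a = 1, \ldots, i$. The Wronskian matrix $(b_a^{(c-1)})_{a,c=1}^{\,i}$ then factors as the product of the $i \times (r+1)$ block formed by the first $i$ rows of $g$ with the $(r+1)\times i$ matrix $(e_k^{(c-1)})$. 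First I would apply Cauchy--Binet to this product, obtaining
\[
y_i \,=\, \sum_{1 \le s_1 < \cdots < s_i \le r+1} \Delta_{[i],S}(g)\,\Wr(e_{s_1}, \ldots, e_{s_i}), \qquad S = \{s_1, \ldots, s_i\},
\]
where $\Delta_{[i],S}(g)$ is the minor of $g$ on rows $\{1, \ldots, i\}$ and columns $S$.

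The next step is to invoke \Ref{5.8.1}: because $e_{s_c} = x^{s_c-1}/(s_c-1)!$, it gives $\Wr(e_{s_1}, \ldots, e_{s_i}) = C_S\, x^{m_S}$, where the constant $C_S = \prod_{a<b}(s_b-s_a)\big/\prod_a (s_a-1)!$ is \emph{strictly positive} (as $s_1 < \cdots < s_i$) and $m_S \ge 0$ is a nonnegative integer. Hence the coefficient of each power $x^m$ in $y_i$ equals the positive linear combination $\sum_{S:\, m_S = m} C_S\,\Delta_{[i],S}(g)$ of minors of $g$. Since $\Delta_{[i],[i]}(g) = 1$ and $\Wr(e_1, \ldots, e_i) = 1$ by Example \ref{2.3.1}, the constant term of $y_i$ is $1$; the single minor $\Delta_{[i],\{1,\ldots,i-1,j+1\}}(g)$ collapses, by upper-unitriangularity of $g$, to the entry $g_{i,j+1}$, which is the term $b_{ij}$ in the decomposition $a_{i,j-i+1} = b_{ij} + \phi_{ij}$ of Theorem \ref{5.6}, and the remaining minors assemble into $\phi_{ij}$. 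This proves the first assertion.

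For the second assertion I would use two elementary facts. First, with the row set fixed to $[i]$, no minor $\Delta_{[i],S}(g)$ vanishes identically on $N$: a minor of an upper-unitriangular matrix with increasing rows $\{1,\ldots,i\}$ and increasing columns $\{s_1 < \cdots < s_i\}$ is identically zero only if $s_a < a$ for some $a$, which never occurs. Thus, for totally positive $g$, every minor appearing is strictly positive, directly by the definition of total positivity. Second, as $S$ runs over the $i$-subsets of $\{1, \ldots, r+1\}$, the exponents $m_S$ fill the whole range $\{0, 1, \ldots, i(r+1-i)\}$ without gaps; consequently every power $x^m$ up to $\deg y_i = i(r+1-i)$ does occur, with coefficient a \emph{nonempty} sum of strictly positive terms. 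Therefore all coefficients of every $y_i$ are strictly positive, which is the claim.

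The Cauchy--Binet bookkeeping is routine; the steps that require care are the strict positivity of the constants $C_S$ (immediate from \Ref{5.8.1}) and the two ingredients used for \emph{strict} positivity of each coefficient, namely that no minor on the rows $[i]$ is identically zero on $N$ and that the exponents $m_S$ leave no gaps, so that no coefficient of $y_i$ is secretly an empty sum. Matching the combination precisely against the polynomials $\phi_{ij}$ of Theorem \ref{5.6} --- that is, isolating the one minor that degenerates to $b_{ij}$ --- is purely a matter of bookkeeping.
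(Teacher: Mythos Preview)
Your proof is correct and is precisely the approach the paper intends: its one-line proof ``follows from formula \Ref{5.8.1}, cf.\ formula \Ref{yi3}'' is exactly your Cauchy--Binet expansion of $\Wr(b_1,\ldots,b_i)$ over the basis $e_k=x^{k-1}/(k-1)!$ followed by the Wronskian-of-monomials identity \Ref{5.8.1}. You have also made explicit the two points the paper leaves tacit (that no minor on rows $[i]$ vanishes identically on $N$, and that the exponents $m_S$ leave no gaps), which are needed to conclude that every coefficient of each $y_i$ is strictly positive rather than merely nonnegative.
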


\begin{proof}
The proof follows from formula \Ref{5.8.1}, cf. formula \Ref{yi3}.
\end{proof}

\section{Bethe cells from subspaces of $\BC[x]$ and from critical points}
\label{6}

In this section we describe a generalization of the previous correspondence.

\subsection{From  a vector space of polynomials to a population $\mc Z_V$, see \cite{MV}}
\label{sec poavs}

Let $V\subset \C[x]$ be an $r+1$-dimensional vector space of polynomials in $x$.
We assume that $V$ has not {\it base points}, that is for any $z\in \C$ there is 
$f(x) \in V$ such that $f(z)\ne 0$.

\vsk.2> 
For any $z\in \C$ there exists a unique $r+1$-tuple of integers
$\bla=(\la_0=0<\la_1<\dots <\la_r)$
such that
for any $i=0,\dots, r$, there exists $f(x)\in V$ with the property:
\bea
\frac{d^{\la_i}f}{dx^{\la_i}}(z)\ne 0,  \qquad
\frac{d^{j}f}{dx^{j}}(z)(z) = 0,\qquad j<\la_i\,.
\eea
The tuple $\bla$ is called the {\it tuple of exponents} of $V$ at $z$.

\vsk.2>
Having $\bla$ introduce an $r$-tuple of nonnegative integers 
$\bs \mu =(\mu_1,\dots,\mu_r)$ by the formula
\bea
\mu_1 +\dots+\mu_i + i = \la_i,   \qquad i=1, \dots,r\,.
\eea
The tuple $\mu$ is called the $\frak{sl}_{r+1}$ {\it weight} of $V$ at $z$.

\vsk.2>
A point $z$ is  {\it regular} for $V$ if 
$\bla=(0,1,\dots,r)$ and hence, $\mu=(0,\dots,0)$,
otherwise the point $z$ is {\it singular}.
Denote by $\Si_V$ the set of singular points. The set $\Si_V$ is finite.
Denote
\bea
\Si_V=\{z_1,\dots,z_n\} \subset \C\,.
\eea
Denote $\bs \mu^{(a)} = ( \mu^{(a)}_1,\dots, \mu^{(a)}_r)$ the weight vector at a
 singular point $z_a$.

\vsk.2>

Introduce an $r$-tuple of polynomials $\bs T = (T_1,\dots,T_r)$,
\bean
\label{Ti}
T_i(x) = \prod_{a=1}^n (x-z_a)^{\mu^{(a)}_i}\,.
\eean
\vsk.2>

Let $\bb=(b_1,\dots,b_{r+1})$ be a basis of $V$, then
the Wronskian $\Wr(b_1,\dots,b_{r+1})$ does not depend on the choice of the basis up to multiplication
by a nonzero constant. Moreover,
\bea
\Wr(b_1,\dots,b_{r+1})\,=\, \on{const}\, T_1^rT_2^{r-1}\dots T_{r-1}^2T_r\,.
\eea

\vsk.2>
This formula shows that the set  $\Si_V$ of singular points of $V$ is the set of zeros of the Wronskian of a basis of $V$.

\vsk.2>
For any $i=2,\dots,r$ and $b_1,\dots,b_i\in V$ introduce the {\it reduced Wronskian}
\bea
\Wd(b_1,\dots,b_i) = \Wr(b_1,\dots,b_i)\,T_1^{1-i}T_2^{2-i}\dots T^{-1}_{i-1}\,.
\eea
For any $i=1,\dots,r$ and $b_1,\dots,b_i\in V$, the reduced Wronskian is a polynomial.

\vsk.2>
Introduce the {\it  reduced Wronski map}  $W^\dagger_V$, which maps
 the variety of bases of $V$ to the space of $r$-tuples of polynomials.
If $\bb=(b_1,\dots,b_{r+1})$ is a basis of $V$, then 
\bean
\label{Wd}
W^\dagger_V : \bb \mapsto  \by = (y_1,\dots,y_r):= (b_1,\Wd(b_1,b_2), \dots, \Wd(b_1,\dots,b_r))\,,
\eean
cf. \Ref{WR}.  We set $y_0=y_{r+1}=1$.

\vsk.2>

Denote
\bean
\label{tild yi}
\tty_i =\Wd(b_1,b_2,\dots, b_{i-1}b_{i+1})\,,\qquad i=1,\dots, r\,.
\eean
Then
\bean
\label{gen pr}
\Wr(y_i, \tty_i)\,=\, \on{const}\, T_i\,y_{i-1}\,y_{i+1}\,,\qquad
 i=1,\dots,r\,.
\eean

\vsk.2>
The generalized Wronski map  $W^\dagger_V$ induces a map of  the variety
 of bases of $V$ to the direct product of projective spaces
$\Bbb P(\C[x])^r$. The bases defining the same complete flag of $V$ are mapped to the same point
of $\Bbb P(\C[x])^r$. Hence the generalized Wronski map induces a map,
\bea
W^\dagger_V : X_V\to \Bbb P(\C[x])^r\,,
\eea
 of 
the variety $X_V$ of complete flags of $V$ to $\Bbb P(\C[x])^r$.

\begin{thm}[\cite{MV}]
\label{thm 6.1}

This map is an embedding.
The image 
\bean
\label{Z_V}
\mc Z_V\subset \Bbb P(\C[x])^r
\eean
 of this map is isomorphic to the variety of complete flags of $V$.
\end{thm}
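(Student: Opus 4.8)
The plan is to exhibit $W^\dagger_V$ as a morphism that is bijective onto its image and whose inverse is given by regular (polynomial) formulas, so that it is an isomorphism of the complete variety $X_V$ onto a closed subvariety $\mc Z_V$.

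First I would verify that the map descends to flags and has finite-dimensional image. For an $i$-dimensional subspace $U\subset V$ with basis $b_1,\dots,b_i$, the Wronskian $\Wr(b_1,\dots,b_i)$ is alternating and multilinear, so a change of basis multiplies it by the nonzero determinant of the transition matrix; the same holds after dividing by the fixed polynomial $T_1^{\,i-1}\cdots T_{i-1}$, i.e. for $\Wd(b_1,\dots,b_i)$. Hence $y_i:=\Wd(b_1,\dots,b_i)$ depends, up to a nonzero scalar, only on $U$, and the no-base-point hypothesis guarantees, through the definition \Ref{Ti} of $\bs T$, that $y_i$ is a genuine polynomial of bounded degree. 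Thus $W^\dagger_V$ is a well defined morphism from $X_V$ into a finite product $\prod_{i=1}^r\BP(\C[x]_{\le D_i})$, a separated complete target.

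The core of the argument is to reconstruct the flag from its image, which simultaneously proves injectivity and produces an inverse. I would argue by induction on $i$. From $y_1=b_1$ (up to scale) we read off the line $U_1=\langle y_1\rangle$. Assume the subspace $U_{i-1}=\langle b_1,\dots,b_{i-1}\rangle$ has been recovered, and let $L$ be the monic linear differential operator of order $i-1$ whose solution space is $U_{i-1}$; the classical reduction-of-order identity gives
\[
\Wr(b_1,\dots,b_{i-1},g)=\Wr(b_1,\dots,b_{i-1})\,L[g]
\]
for every function $g$. Substituting $g=b_i$ into the defining relation $y_i=\on{const}\cdot\Wd(b_1,\dots,b_{i-1},b_i)$ turns it into an inhomogeneous linear equation $L[b_i]=h$, with $h$ an explicit polynomial assembled from $y_i$ and the factor $\bs T$. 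By the solvability and univaluedness results of Section \ref{2.2}, together with the existence of the genuine solution $b_i\in V$, the polynomial solutions of this equation form exactly the coset $b_i+U_{i-1}\subset V$. Hence $U_i\subset V$ is uniquely determined, the whole flag is recovered, and $W^\dagger_V$ is injective.

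Finally I would note that each reconstruction step solves an inhomogeneous linear equation $\Wr(b_1,\dots,b_{i-1},b_i)=(\text{known polynomial})$ whose coefficients are polynomials, which by the method of undetermined coefficients reduces to linear algebra on coefficient vectors and is therefore regular in the entries of $(y_1,\dots,y_r)$; so the set-theoretic inverse of $W^\dagger_V$ is a morphism on $\mc Z_V$. Consequently $W^\dagger_V\colon X_V\to\mc Z_V$ is a bijective morphism with regular inverse, hence an isomorphism, and since $X_V$ is complete and the target separated, $\mc Z_V$ is closed. This is precisely the asserted embedding onto a subvariety isomorphic to the flag variety of $V$. The step I expect to be the main obstacle is the reconstruction: one must check carefully that the inhomogeneous Wronskian equation has polynomial solutions forming a single coset over $U_{i-1}$ — this is where univaluedness and the existence of the true $b_i$ are essential — and that the resulting inverse is regular rather than merely set-theoretic. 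It is worth stressing that the naive alternative of projectivizing the linear Wronskian map $\wedge^iV\to\C[x]$ on each Grassmannian factor fails, since that linear map need not be injective (for example $\Wr(1,x^3)$ and $\Wr(x,x^2)$ are proportional); it is the nested structure of the flag, used through $y_1,\dots,y_{i-1}$ in the recovery of $U_i$, that makes the inverse well defined.
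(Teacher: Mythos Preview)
The paper does not give its own proof of this theorem: it is stated with the attribution \cite{MV} and no argument is supplied here. So there is nothing in the present paper to compare your proposal against directly. That said, your outline is essentially the strategy used in \cite{MV}: show the map is well defined on flags (Wronskians are alternating multilinear), then recover the flag $U_1\subset U_2\subset\cdots$ inductively from $(y_1,\dots,y_r)$ by solving, at each step, an inhomogeneous linear ODE of the form $L[g]=h$ whose homogeneous solution space is $U_{i-1}$; the polynomial solutions form the coset $b_i+U_{i-1}$, which pins down $U_i$.

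One point deserves more care than you give it. Your identity ``$\Wr(b_1,\dots,b_{i-1},g)=\Wr(b_1,\dots,b_{i-1})\,L[g]$'' is correct (this is the classical formula $L[g]=\Wr(b_1,\dots,b_{i-1},g)/\Wr(b_1,\dots,b_{i-1})$), and it gives injectivity cleanly. But for the \emph{regularity} of the inverse you need the operator $L$, hence a basis of $U_{i-1}$, to vary regularly with the data $(y_1,\dots,y_{i-1})$. Your inductive reconstruction of $U_{i-1}$ is set-theoretic; to promote it to a regular inverse you should either (a) exhibit, locally on $\mc Z_V$, a regularly varying basis of each $U_{i-1}$ --- this is what the explicit integral/undetermined-coefficients formulas in \cite{MV} provide --- or (b) bypass the issue entirely by observing that $X_V$ is smooth projective, the target is projective, the map is injective, and then check that the differential of $W^\dagger_V$ is everywhere injective (this follows from the same Wronskian identity applied to first-order deformations), so that $W^\dagger_V$ is a closed immersion. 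Either route closes the gap you yourself flagged.
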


The variety $\mc Z_V$
 is called the {\it population} associated with $V$, see \cite{MV}.

\vsk.2>
See in Sections \ref{1.4}-\ref{1.5.2} the example of this construction corresponding  to the case,
 where
$V=\C[x]_{\leq r}$ is the space of all the polynomials of degree $\leq r$. 
In that case the set of singular points of $V$ is empty and $T_1=\dots=T_r=1$.

\vsk.2>
A tuple $\by=(y_1,\dots,y_r)\in \C[x]^r$ is called {\it fertile}  with respect to 
$T_1,\dots,T_r$, if for any $i=1,\dots,r$ the equation
\bean
\label{gWe}
\Wr(y_i,\hat y_i)=\,T_i\, y_{i-1}\,y_{i+1}\,\,
\eean
with respect to $\hat y_i(x)$ admits a polynomial solution.
For example, all tuples $(y_1:\dots:y_r)\in\mc Z_V$ are fertile due to
\Ref{gen pr}.

\vsk.2>

A tuple $\by=(y_1,\dots,y_r)\in \C[x]^r$ is called {\it generic} with respect to 
$T_1,\dots,T_r$,  if for any $i$ the polynomial $y_i(x)$ has  no multiple roots 
and the polynomials $y_i(x)$ and $\,T_i(x) y_{i-1}(x)\,y_{i+1}(x)\,$ have no common roots.

\begin{lem}[\cite{MV}]
\label{lem gen}
All $\by \in \mc Z_V$ are fertile. Also there 
exists a Zariski open subset $U\subset \mc Z_V$ such that any
$\by\in U$ is generic.  
\end{lem}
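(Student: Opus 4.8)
The plan is to dispose of fertility immediately and then treat genericity as an open, dense condition on the irreducible variety $\mc Z_V$. For fertility I would note that every $\by=(y_1,\dots,y_r)\in\mc Z_V$ is of the form $\Wd(\bb)$ for a basis $\bb=(b_1,\dots,b_{r+1})$ of $V$, in the notation of \Ref{Wd}. Taking $\tty_i=\Wd(b_1,\dots,b_{i-1},b_{i+1})$ as in \Ref{tild yi}, formula \Ref{gen pr} already gives $\Wr(y_i,\tty_i)=\on{const}\cdot T_i\,y_{i-1}\,y_{i+1}$, with a nonzero constant that does not depend on the chosen basis; dividing $\tty_i$ by this constant yields a polynomial solution of \Ref{gWe}. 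Thus fertility holds at every point of $\mc Z_V$, with no genericity hypothesis needed.

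For the second assertion my plan is to realize the generic locus as a Zariski open subset and then prove it nonempty. First I would observe that, for a projective tuple $\by\in\mc Z_V\subset\PP(\C[x])^r$, the two conditions defining genericity — that each $y_i$ have no multiple root, and that $y_i$ and $T_i\,y_{i-1}\,y_{i+1}$ share no root — are exactly the nonvanishing of $\on{disc}(y_i)$ and of $\Res(y_i,\,T_i\,y_{i-1}\,y_{i+1})$. These are homogeneous in the coefficients of the $y_j$, so their zero loci are well-defined closed subvarieties of $\mc Z_V$; their union over $i=1,\dots,r$ is closed, and I would let $U$ be the complement, a Zariski open subset. Since by Theorem \ref{thm 6.1} the variety $\mc Z_V\cong X_V$ is isomorphic to the flag variety of $V$ and hence irreducible, $U$ is automatically dense once it is shown nonempty, i.e. once none of these discriminants and resultants vanishes identically on $\mc Z_V$.

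The main obstacle is therefore to exhibit a single generic tuple. I would work at a generic complete flag of $V$, where each reduced Wronskian $y_i=\Wd(b_1,\dots,b_i)$ attains its expected degree; as the flag varies, the roots of $y_i$ move with enough freedom that for a suitable choice they are pairwise distinct and avoid both the singular set $\Si_V$ (the zeros of the $T_j$) and the roots of $y_{i\pm1}$. The technical heart is precisely the verification that the relevant discriminants and resultants are not identically zero; I expect to carry this out either by a direct computation in low rank, in the spirit of the $\on{SL}_3$ and $\on{SL}_4$ evolutions of Section \ref{5} (where all $T_i=1$), or by a degeneration argument sending the roots of each $y_i$ to prescribed distinct points of $\C\setminus\Si_V$. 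Producing one such generic $\by$ gives $U\ne\varnothing$, and irreducibility then upgrades this to density, completing the argument.
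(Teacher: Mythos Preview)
The paper does not supply its own proof of this lemma; it is quoted from \cite{MV} without argument, so there is nothing to compare against directly.

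Your approach is the natural one and is essentially correct. Fertility follows immediately from \Ref{gen pr}; indeed, unwinding the definition of $\Wd$ and applying the W5 identity \Ref{2.4.1} shows the constant there is actually $1$, so no rescaling is needed. For genericity, framing the non-generic locus as a closed subvariety and invoking irreducibility of $\mc Z_V\cong X_V$ is the right strategy. One caution on phrasing: the discriminant and resultant are only well-defined as polynomial functions once the degrees of the $y_i$ are fixed, and these jump across Bruhat strata of $\mc Z_V$; the cleaner formulation is that ``$y_i$ has a repeated root'' and ``$y_i$ and $T_i\,y_{i-1}\,y_{i+1}$ share a root'' are closed conditions in any fixed finite-dimensional projective space of polynomials containing the relevant $y_j$, which is all you need. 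The genuine remaining work, as you correctly flag, is exhibiting a single generic tuple for an arbitrary $V$ with no base points; your sketch (``the roots move with enough freedom'') is plausible but is not yet a proof, and the explicit examples in Section~\ref{5} concern only $V=\C[x]_{\le r}$, so they do not by themselves settle the general case.
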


Let $\by=(y_1,\dots, y_r)$ be a tuple of polynomials. Denote
$\bk=(k_1,\dots,k_r):=
(\deg y_1,\dots,y_r)$. Let
\bea
y_i(x)=\prod_{j=1}^{k_i} (x-t^{(i)}_j)\,, \qquad i=1,\dots,r\,,
\eea
where $t^{(i)}_j$ are the roots of $y_i$.
Denote
$\bt=(t^{(1)}_{1},\dots,t^{(1)}_{k_1};\dots;t^{(r)}_1,\dots, t^{(r)}_{k_r})$,
the tuple of roots of $\by$ ordered in any way.

\begin{thm}[\cite{MV}]
\label{thm mv gf}
A tuple $\by$ is generic and fertile with respect to $T_1,\dots,T_r$ if and only if the tuple of roots
$\bt$ is a critical points of the master function
\bean
\label{nnPhi}
\Phi_\bk(\bu,\bz,\bs\mu) 
&=& \prod_{i=1}^r\prod_{j=1}^{k_i}\prod_{a=1}^n (u^{(i)}_{j}-z_a)^{-\mu_i^{(a)}}
\\
\notag
&& 
\times 
\prod_{i = 1}^r \prod_{1\leq l < m \leq k_i} 
(u_l^{(i)} - u_m^{(i)})^{2}\cdot 
\prod_{i=1}^{r-1}\prod_{l=1}^{k_i}
\prod_{m=1}^{k_{i+1}}  (u_l^{(i)} - u_m^{(i+1)})^{-1} \,.
\eean
\end{thm}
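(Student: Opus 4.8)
The plan is to treat each index $i$ separately and to reduce the fertility of $\by$ at level $i$ to a residue condition, which I will then match term by term with the Bethe Ansatz equation. This is the exact analogue of Lemma \ref{lem gfer}, the only new feature being the weight factors $T_i$, whose logarithmic derivative supplies precisely the terms involving the singular points $z_a$. First I would set $P_i := T_i\,y_{i-1}\,y_{i+1}$; by \Ref{2.1.2} the fertility equation \Ref{gWe}, $\Wr(y_i,\hat y_i)=P_i$, admits a polynomial solution $\hat y_i$ if and only if the indefinite integral $\int P_i\,y_i^{-2}\,dx$ carries no logarithmic part, and by condition (U) of Section \ref{2.2} this happens if and only if $P_i\,y_i^{-2}$ has vanishing residues at all of its poles. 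Here genericity is used to locate these poles: since $y_i$ has simple roots and no root in common with $P_i$, the poles of $P_i\,y_i^{-2}$ are exactly the roots $t^{(i)}_1,\dots,t^{(i)}_{k_i}$ of $y_i$, each of order two.

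The crux is the residue at such a double pole. Expanding near $t=t^{(i)}_m$ I would obtain
$$\Res_{x=t}\frac{P_i(x)}{y_i(x)^2}=\frac1{y_i'(t)^2}\Big(P_i'(t)-\frac{y_i''(t)}{y_i'(t)}\,P_i(t)\Big),$$
so that, as $P_i(t)\neq0$, the residue vanishes if and only if $P_i'(t)/P_i(t)=y_i''(t)/y_i'(t)$. Writing out $P_i'/P_i=T_i'/T_i+y_{i-1}'/y_{i-1}+y_{i+1}'/y_{i+1}$ and using $T_i'/T_i=\sum_a\mu_i^{(a)}/(x-z_a)$ together with the identity $y_i''(t)/y_i'(t)=2\sum_{l\neq m}1/(t-t^{(i)}_l)$ valid at a simple root $t=t^{(i)}_m$, this condition becomes
$$-\sum_a\frac{\mu_i^{(a)}}{t^{(i)}_m-z_a}+2\sum_{l\neq m}\frac{1}{t^{(i)}_m-t^{(i)}_l}-\sum_l\frac{1}{t^{(i)}_m-t^{(i-1)}_l}-\sum_l\frac{1}{t^{(i)}_m-t^{(i+1)}_l}=0,$$
which is exactly the equation $\der\log\Phi_\bk/\der u^{(i)}_m=0$ for the master function \Ref{nnPhi}.

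Finally I would assemble both implications. If $\by$ is generic and fertile, then for each $i$ the solvability of \Ref{gWe} forces all the residues above to vanish at every root of $y_i$, i.e.\ every Bethe equation holds and $\bt$ is a critical point. Conversely, a critical point of $\log\Phi_\bk$ lies by definition in the locus where $\Phi_\bk$ is finite and nonzero; this locus is precisely the genericity locus --- distinct roots within each level, and no coincidence among the $t^{(i)}_m$, the $t^{(i\pm1)}_l$, and those $z_a$ with $\mu_i^{(a)}>0$ --- and there the vanishing of all residues returns the polynomial solvability of \Ref{gWe} for each $i$, i.e.\ fertility. I expect the only genuine work to be the bookkeeping in the residue formula and the logarithmic-derivative identity at a simple root; neither is a real obstacle, and the whole argument is formally identical to the base-point-free case $T_i\equiv1$ of Lemma \ref{lem gfer}, so I would present it as the natural generalization of that lemma.
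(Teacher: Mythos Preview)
The paper does not supply its own proof of this theorem; it is quoted from \cite{MV} without argument. Your proposal is correct and is essentially the standard proof from \cite{MV}: reduce fertility at level $i$ to the vanishing of all residues of $T_i y_{i-1}y_{i+1}/y_i^2$, compute the residue at a simple root $t^{(i)}_m$ of $y_i$ via the logarithmic-derivative identity, and recognise the resulting equation as $\partial\log\Phi_\bk/\partial u^{(i)}_m=0$.

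One point you might make explicit: the equivalence ``no logarithmic part $\Leftrightarrow$ polynomial solution'' needs a word beyond condition (U). Zero residues give that $\int P_i\,y_i^{-2}\,dx$ is \emph{rational}, with at most simple poles at the roots of $y_i$ (the double poles of $P_i/y_i^2$ integrate to simple poles once their residues vanish); then multiplication by $y_i$, which has simple zeros exactly there, clears those poles and yields a genuine polynomial $\hat y_i$. With that sentence added, the argument is complete.
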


These master functions appear in the
integral representations of the KZ equations associated with
$\frak{gl}_{r+1}$\,, see \cite{SV}.

\vsk.2>
The following statement is converse to the statement of Theorem \ref{thm mv gf}.
\vsk.2>

Given a finite set $\bz=(z_1,\dots,z_n)$, a collection of  $\bs \mu^{(a)}=( \mu^{(a)}_1,\dots, \mu^{(a)}_r)
\in \Z_{\geq 0}^r$, $a=1,\dots,n$, a vector $\bk=(k_1,\dots,k_r)\in \Z^r_{\geq 0}$,
 we define the { master function}
$\Phi_\bk(\bu,\bz,\bs\mu)$ by formula \Ref{nnPhi}.

\begin{thm}
[\cite{MV}]
\label{thm crV}
If $\bt$ is a critical point of a master function $\Phi_\bk(\bu,\bz,\bs\mu)$  with respect to the $\bu$-variables,
 then there exists a unique
$r+1$-dimensional vector space $V\subset \C[x]$,  a basis $\bb$ of $V$,
 such that the roots of the $r$-tuple of polynomials $W^\dagger_V(\bb)$ give $\bt$.

\end{thm}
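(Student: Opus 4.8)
The plan is to recover $V$ as the kernel of a differential operator built directly from $\bt$, and to use fertility to force that kernel to consist of polynomials. First I would pass from the critical point to polynomials: set $y_i(x)=\prod_{j=1}^{k_i}(x-t^{(i)}_j)$, with $y_0=y_{r+1}=1$, and form the weight polynomials $T_i(x)=\prod_{a=1}^n(x-z_a)^{\mu^{(a)}_i}$ as in \Ref{Ti}. By Theorem \ref{thm mv gf} the tuple $\by=(y_1,\dots,y_r)$ is generic and fertile with respect to $T_1,\dots,T_r$; this is the only property of $\bt$ that I will use.

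Next I would write down the complete Wronskians that any basis realizing $\by$ is forced to have. If $\bb=(b_1,\dots,b_{r+1})$ is a basis of some $V$ with $W^\dagger_V(\bb)=\by$, then by the definition of the reduced Wronskian one must have $\Wr(b_1,\dots,b_i)=W_i:=y_i\prod_{j=1}^{i-1}T_j^{\,i-j}$ for $i\le r$, while $\Wr(b_1,\dots,b_{r+1})$ is forced to equal $\on{const}\cdot\prod_{i=1}^r T_i^{\,r+1-i}=:W_{r+1}$, the Wronskian of a basis of $V$. Setting $W_0=1$, I then define the monic operator of order $r+1$,
$$
D=A_{r+1}\cdots A_1,\qquad A_m=\partial-\tfrac{d}{dx}\log(W_m/W_{m-1})\,,
$$
which is manifestly determined by $\by$ together with the data $\bz,\bs\mu$ of the master function.

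The heart of the argument, and the step I expect to be the main obstacle, is to prove that $V:=\Ker D$ is an $(r+1)$-dimensional space of \emph{polynomials}. I would build a basis by successive integration, starting from $b_1=y_1$ and using the classical factorization identity $\Wr(b_1,\dots,b_m,f)=W_m\,A_m\cdots A_1(f)$, so that producing $b_{m+1}$ amounts to solving $A_m\cdots A_1(b_{m+1})=W_{m+1}/W_m$ by one quadrature at a time. Each quadrature integrates a rational function, and the no-logarithm condition (U) of Section \ref{2.2} is exactly what guarantees a rational antiderivative; this vanishing of residues is the analytic content of fertility. One then upgrades \emph{rational} to \emph{polynomial} by a local exponent analysis: genericity controls the exponents at the Bethe roots $t^{(i)}_j$, the prescribed vanishing of $T_i$ supplies the correct exponents $\bs\mu^{(a)}$ at each $z_a$ (via \Ref{5.8.1}), and a degree count at $\infty$ rules out poles there. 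The W5 Identity (Example \ref{2.4.2} and Proposition \ref{2.4.1}) is the algebraic engine that, at each stage, reduces the higher Wronskian equation to the basic fertility equation for $\by$ and pins down $\Wr(b_1,\dots,b_m)=W_m$ exactly.

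Finally I would verify the claim and deduce uniqueness. By construction $\Wr(b_1,\dots,b_i)=W_i$, so $\Wd(b_1,\dots,b_i)=W_i\prod_{j<i}T_j^{\,j-i}=y_i$ and hence $W^\dagger_V(\bb)=\by$; the roots of $W^\dagger_V(\bb)$ are precisely the $t^{(i)}_j$, i.e. they give $\bt$, while $\Si_V=\{z_1,\dots,z_n\}$ with weight $\bs\mu^{(a)}$ at each $z_a$, matching the master function. For uniqueness, any pair $(V',\bb')$ realizing $\bt$ produces the same tuple $\by$, hence the same complete Wronskians $\Wr(b'_1,\dots,b'_i)=W_i$, hence the same fundamental operator $D$; therefore $V'=\Ker D=V$. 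In this way the entire statement reduces to the existence half, namely the polynomiality of $\Ker D$, which is exactly where fertility and genericity do the essential work.
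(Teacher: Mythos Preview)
The paper does not actually prove this theorem: it is quoted from \cite{MV} and immediately followed only by ``Cf.\ Theorem~\ref{1.4.1}.'' The nearest thing to an argument appears in Section~6.2, which sketches the \cite{MV} route: from the critical point form $\by$ via \Ref{yit}, run the reproduction procedure \Ref{GEN} in all directions to generate the population $\mc Z$, and then invoke Theorem~\ref{thm crZ} to identify $\mc Z$ with the flag variety of a uniquely determined space $V$. So there is no detailed proof in the paper to compare against.

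Your outline is a genuinely different route. Rather than building the whole population and reading $V$ off its flag-variety structure, you construct $V$ in one shot as $\Ker D$ for the factored operator $D=A_{r+1}\cdots A_1$ determined by the $W_i$'s. This is the ``fundamental differential operator'' viewpoint in \cite{MV}, and it is more direct for the existence statement; the population approach, by contrast, packages existence and uniqueness together and yields the full Bruhat picture for free. Your uniqueness argument is clean and correct: the logarithmic derivatives $(\log(W_m/W_{m-1}))'$ depend only on the roots of the $y_i$ and the data $(\bz,\bs\mu)$, so any realizing $V'$ has the same $D$ and hence $V'=\Ker D$.

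The one place your sketch is thin is exactly where you flag it: showing that each successive quadrature produces a \emph{polynomial} and not merely a rational function. Fertility, as stated, controls only the first-order equations $\Wr(y_i,\hat y_i)=T_iy_{i-1}y_{i+1}$; to propagate this to the higher $b_{m+1}$ you must show that the integrand $W_{m+1}/(W_m\cdot(\text{partial antiderivative}))$ at each stage again has zero residues, and this is where the W5 identity has to be invoked precisely rather than alluded to. In \cite{MV} this step is organized as an induction showing that each intermediate tuple remains fertile (the ``reproduction preserves fertility'' statement), which is essentially the content of Theorem~\ref{3.3} here. If you make that inductive use of W5 explicit, your argument goes through.
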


Cf. Theorem \ref{1.4.1}.

\subsection{From a critical point to a population $\mc Z_V$, \cite{MV}}

Given
$\bz=(z_1,\dots,z_n)$, a collection of  $\bs \mu^{(a)}=( \mu^{(a)}_1,\dots, \mu^{(a)}_r)
\in \Z_{\geq 0}^r$, $a=1,\dots,n$, a vector $\bk=(k_1,\dots,k_r)\in \Z^r_{\geq 0}$,
let 
$\bt=(t^{(1)}_{1},\dots,t^{(1)}_{k_1};\dots;t^{(r)}_1,\dots, t^{(r)}_{k_r})$,
be  a critical point of the master function $\Phi_\bk(\bu,\bz,\bs\mu) $.
Define
\bean
\label{TTi}
T_i(x) &=&\prod_{a=1}^n(x-z_a)^{\mu^{(a)}_i}\,,\qquad i=1,\dots,r\,,
\\
\label{yit}
y_i(x)&=&\prod_{j=1}^{k_i}(x-t^{(i)}_j)\,,\qquad i=1,\dots,r\,,
\eean
and the tuple 
\bea
\by=(y_1:\dots:y_r)\ \in \Bbb P(\C[x])^r\,.
\eea
\vsk.2>

The tuple $\by$ is generic and fertile with respect to $T_1,\dots,T_r$.
Hence for every $i$, there exists a polynomial $\tty_i$ satisfying the equation
\bea
\Wr(y_i,\hat y_i)=\,\on{const}\,T_i\, y_{i-1}\,y_{i+1}\,.
\eea
Choose one solution $\tty_i$ of this equation, denote
\bea
\tilde y_i(c;x) = y_i(x)+c \hat y_i(x)\,,\qquad c\in \C\,,
\eea
and define a curve in $\Bbb P(\C[x])^r$ by the formula
\bean
\label{GEN}
\by^{(i)}(c;x) = (y_1(x):\dots : \tilde y_i(c;x) :\dots : y_r(x)) \,\in\,\Bbb P(\C[x])^r\,.
\eean
This curve is called the {\it generation} from $\by$ in the $i$-th direction.
\vsk.2>

Thus starting from the point $\by$ in $\Bbb P(\C[x])^r$ we have constructed $r$ curves in $\Bbb P(\C[x])^r$.
Now starting from any point of the
constructed  curves we may repeat this procedure and generate new $r$ curves 
in $\Bbb P(\C[x])^r$ in any of the $r$ directions. Repeating this procedure in all possible direction in any number of steps
we obtain a subset $\mc Z\subset \Bbb P(\C[x])^r$ of all points appearing in this way.
The subset $\mc Z$ is called the {\it population} generated from the critical point $\bt$

\begin{thm}
[\cite{MV}]
\label{thm crZ} 

The set $\mc Z$ is an algebraic variety isomorphic to the variety $X$ of 
complete flags in an $r+1$-dimensional vector space. 
Moreover, starting with given $\bt$ one can also determine uniquely 
an $r+1$-dimensional vector space
$V$ and a basis $\bb$ of $V$,
 such that $\by=W^\dagger_V(\bb)$ and $\mc Z=\mc Z_V$. 

\end{thm}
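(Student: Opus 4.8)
The plan is to reduce the statement to the already-established embedding theorem for a fixed space of polynomials, Theorem \ref{thm 6.1}, by identifying the abstractly generated population $\mc Z$ with the concrete population $\mc Z_V$. First I would invoke Theorem \ref{thm crV}: applied to the critical point $\bt$, it furnishes the unique $(r+1)$-dimensional space $V\subset\C[x]$ together with a basis $\bb=(b_1,\ldots,b_{r+1})$ whose reduced Wronski image $W^\dagger_V(\bb)$ has root tuple $\bt$. Since the $y_i$ of \Ref{yit} are the monic polynomials with exactly these roots, up to the independent projective scalings built into $\BP(\C[x])^r$ one has $\by = W^\dagger_V(\bb)$, so the starting point $\by$ already lies on $\mc Z_V$. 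This simultaneously records the uniqueness asserted in the ``moreover'' clause.

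The core of the argument is then to prove $\mc Z = \mc Z_V$ by a two-sided inclusion. For $\mc Z\subseteq\mc Z_V$ I would show that each elementary generation \Ref{GEN} is merely an elementary modification of the complete flag $F=(V_1\subset\cdots\subset V_r)$ of $V$ represented by $\by$: one keeps $V_{i-1}$ and $V_{i+1}$ fixed and lets $V_i$ vary inside $\BP(V_{i+1}/V_{i-1})\cong\BP^1$. Concretely, choosing a basis adapted to $F$ and taking $\tty_i=\Wd(b_1,\ldots,b_{i-1},b_{i+1})$ as in \Ref{tild yi}, the prefactor in the reduced Wronskian depends only on the number of arguments, so multilinearity of $\Wr$ gives $\Wd(b_1,\ldots,b_{i-1},b_i+c\,b_{i+1}) = y_i + c\,\tty_i = \tilde y_i(c)$. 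Thus the generation curve is exactly the $W^\dagger_V$-image of this pencil of flags, so every generation stays inside $\mc Z_V$; as $\mc Z_V$ is closed by Theorem \ref{thm 6.1}, iterating keeps us within $\mc Z_V$.

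For the reverse inclusion $\mc Z_V\subseteq\mc Z$ I would argue that these elementary modifications sweep out the entire flag variety of $V$. This is the standard fact that $X_V$ is connected and is covered by the minimal-parabolic $\BP^1$-fibrations: following the reduced words of the elements $w\in S_{r+1}$, exactly as in the proof of Theorem \ref{1.5.1} for the constant case $T_1=\cdots=T_r=1$ (the factors $T_i$ only rescale the Wronskians and do not affect the flag-modification interpretation), each Bruhat cell of $X_V$ is swept out by a chain of generations, and their union is all of $X_V$. Combining the two inclusions gives $\mc Z=\mc Z_V$, and Theorem \ref{thm 6.1} identifies this with $X_V$, the variety of complete flags of the $(r+1)$-dimensional space $V$; this proves the first assertion, while the first step already supplied the uniqueness of $(V,\bb)$.

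The hard part will be the reverse inclusion. A single generation curve \Ref{GEN} only covers the affine chart $c\in\C$ of the relevant $\BP^1$ of flags, omitting the point at $c=\infty$, namely the flag with $V_i$ replaced by $\langle b_1,\ldots,b_{i-1},b_{i+1}\rangle$. One must check that these omitted boundary points, and inductively all lower-dimensional Bruhat strata, are recovered by generations started from other base points and run in other directions. The clean way to organize this is to match the generation in direction $i$ with the simple reflection $s_i$ and to invoke the combinatorics of reduced words (as encoded in Theorem \ref{1.5.1}, and in the $\on{SL}_4$ case the higher Bruhat orders of Section \ref{8}), so that the cells $\mc Z_w$ assemble into the closed variety $\mc Z_V$; genericity of the sweep on a Zariski-open locus is guaranteed by Lemma \ref{lem gen}.
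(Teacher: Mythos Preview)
The paper does not prove Theorem \ref{thm crZ}; it is quoted from \cite{MV} without proof, as are the companion results Theorem \ref{thm 6.1} and Theorem \ref{thm crV} on which your argument rests. So there is no proof in the present paper to compare your proposal against.

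That said, your sketch is a faithful outline of how the argument in \cite{MV} actually goes: identify the starting tuple $\by$ with a complete flag in $V$ via the reduced Wronski map, recognize each generation in direction $i$ as the elementary modification of the $i$-th step of that flag (a $\BP^1$-pencil, of which the curve \Ref{GEN} is the affine chart), and then use the Bruhat decomposition indexed by reduced words to see that iterated generations sweep out all of $X_V$. The difficulty you single out---that $c\in\C$ misses the flag at $c=\infty$---is real and is resolved in \cite{MV} exactly by reaching the lower strata through other sequences of generations, as you suggest. One organizational remark: to iterate the generation at each step you need fertility of the intermediate tuple, and this is supplied by Lemma \ref{lem gen} once you know the tuple lies in $\mc Z_V$; so the inclusion $\mc Z\subseteq\mc Z_V$ is logically prior and should be established first, after which the reverse inclusion becomes a pure flag-variety statement.
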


\subsection{Bethe cells associated with $(V;z)$} 
\label{sec bcaw}

Let $V$ be an $r+1$-dimensional vector space as in Section \ref{sec poavs}.
Let $\Si_V=\{z_1,\dots,z_n\}\subset \C$ be  the set of singular points of $V$.  Fix a  complex number
 $z\notin \Si_V$,
a regular point of $V$.

\vsk.2>
We say that a basis $\bb=(b_1,\dots,b_{r+1})$ of $V$  is a {\it unipotent basis} of $V$ with respect to $z$, if for any
$i=1,\dots,r+1$, we have
\bean
b_i \,=\,  \frac{(x-z)^{i-1}}{(i - 1)!} \,+ \, \mc O((x-z)^i)\qquad
\on{as}\qquad x\to z\,.
\label{4.4.1z}
\eean
Denote 
by $\mc N(V;z)$ the set of all unipotent bases of $V$ at $z$.

\vsk.2>
If we consider each basis $\bb$ of $V$ as an ${r+1}$-column vector, 
then the group $N$ freely acts on $\mc N(V;z)$ from the left with one orbit.
 We call $\mc N(V;z)$ {\it the cell of bases of $V$ unipotent at $z$}. 
\vsk.2>

\vsk.2>
For any $i=2,\dots,r$ and $b_1,\dots,b_i\in V$ introduce the {\it reduced Wronskian normalized at $z$}
by the formula
\bea
\Wdz(b_1,\dots,b_i) &:=& \Wd(b_1,\dots,b_i)\,T_1^{i-1}(z)T_2^{i-2}(z)\dots T_{i-1}(z)
\\
&=& \Wr(b_1,\dots,b_i)\,\frac{T_1^{i-1}(z)T_2^{i-2}(z)
\dots T_{i-1}(z)}{T_1^{i-1}(x)T_2^{i-2}(x)\dots T_{i-1}(x)}\,.
\eea
For any $i=1,\dots,r$ and $b_1,\dots,b_i\in V$, this is a polynomial.

\vsk.2>
Introduce the {\it  reduced Wronski map}  $W^\dagger_{V,z}$, which maps
 the variety of bases of $V$ to the space of $r$-tuples of polynomials.
If $\bb=(b_1,\dots,b_{r+1})$ is a basis of $V$, then 
\bean
\label{Wdz}
W^\dagger_V : \bb \mapsto  \by = (y_1,\dots,y_r):= (b_1,\Wdz(b_1,b_2), \dots, \Wdz(b_1,\dots,b_r))\,,
\eean
cf. \Ref{WR}.  We set $y_0=y_{r+1}=1$.

\vsk.2>
If $\bb =(b_1, \dots,b_{r+1})\in \mc N(V;z)$, then
\bean
\label{yi(z)0=1 }
y_i(z)=1\,,\qquad i=1,\dots, r\,.
\eean

\vsk.2>
Introduce the {\it Bethe cell} $\mc Y^{Bethe}(V;z)$ as the image of the cell $\mc N(V;z)$ 
under the reduced Wronski map $W^\dagger_{V,z}$\,,
\bean
\label{BcVz} 
\mc Y^{Bethe}(V;z)\, :=\, W^\dagger_{V,z}(\mc N(V;z))\, \subset \,\C[x]^r\,.
\eean

\begin{thm}
\label{thm NYB}
The reduced Wronski map $W^\dagger_{V,z}$ induces an isomorphism
\bean
\label{rW iso}
W^\dagger_{V,z}\,:\,\mc N(V;z)\, \to\, \mc Y^{Bethe}(V;z)\,.
\eean

\end{thm}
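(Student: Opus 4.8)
The plan is to reduce Theorem \ref{thm NYB} to a triangular normal form exactly as in the Triangular Theorem \ref{5.6}, the present statement being the $(V;z)$-analogue of Theorem \ref{4.7}. Surjectivity of $W^\dagger_{V,z}$ is automatic, since $\mc Y^{Bethe}(V;z)$ is by definition its image, so the content is injectivity together with regularity of the inverse. First I would translate the expansion point to the origin by the substitution $w = x - z$; since $z$ is a regular point of $V$ we have $T_i(z)\ne 0$ for all $i$, so each normalization factor
$$
u_i(x) \,=\, \frac{T_1^{i-1}(z)\cdots T_{i-1}(z)}{T_1^{i-1}(x)\cdots T_{i-1}(x)}
$$
is a unit in the local ring at $z$ with $u_i(z)=1$, i.e.\ $u_i = 1 + \mc O(w)$, and these factors are fixed, independent of the chosen basis $\bb$. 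Because $N$ acts freely with one orbit on $\mc N(V;z)$ (Section \ref{sec bcaw}), fixing one unipotent basis $\bb^{\circ}$ identifies $\mc N(V;z)\cong N\cong \BC^q$ via $\bb = g\,\bb^{\circ}$, $g\in N$; I denote the strictly upper-triangular entries of $g$ by $(b_{ij})_{1\le i\le j\le r}$ as in Section \ref{5.5}.

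Next I would set up triangular coordinates on the target. Writing $y_i = u_i\cdot\Wr(b_1,\dots,b_i)$ and using the unipotent normalization \Ref{4.4.1z} (which forces $y_i(z)=1$), I expand $y_i = 1 + \sum_{j\ge 1} a_{ij} w^j/j!$ and pass to the triangular part $\by^{\triangle}$ and its coefficient array $A(\by^\triangle)=(a_{ij})$ as in Section \ref{5.5}. The heart of the proof is the analogue of Triangular Theorem \ref{5.6}(i): for all $1\le i\le j\le r$,
$$
a_{i,\,j-i+1} \,=\, b_{ij} + \phi_{ij}\big((b_{kl})_{k<i}\big),
$$
where $\phi_{ij}$ is a polynomial in the coefficients of the earlier rows with all monomials of degree $\ge 2$. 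The leading ($g$-linear) term is extracted from the monomial Wronskian formula \Ref{5.8.1} exactly as in the computations of Section \ref{5.44}: the coefficient of $w^{j-i+1}$ in $\Wr(b_1,\dots,b_i)$ receives its unique contribution that is linear in a row-$i$ entry precisely from $b_{ij}$, all remaining contributions being quadratic or higher and involving only the rows $k<i$.

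The main obstacle is to verify that the singular data does not disturb this triangular structure, which is where the observation of the first paragraph is used: since $u_i = 1 + \mc O(w)$ is a fixed unit not depending on $g$, multiplication by $u_i$ alters $a_{i,j-i+1}$ only by adding fixed constants times lower coefficients of $\Wr(b_1,\dots,b_i)$, so the $g$-linear part of $a_{i,j-i+1}$ is still exactly $b_{ij}$ and the correction is absorbed into $\phi_{ij}$ without introducing any new linear dependence on row $i$. Granting the displayed relation, the map $(b_{ij})\mapsto (a_{i,j-i+1})$ is unipotent-triangular for the ordering of rows, hence invertible with polynomial inverse obtained by solving recursively on $i$ (the analogue of \Ref{ba}); this yields an isomorphism $\mc N(V;z)\iso\BC^q$, $\bb\mapsto A(\by^\triangle)$. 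Finally, the fertility relations \Ref{gen pr} express every remaining coefficient of $\by$ as a polynomial in the triangular ones (cf.\ Corollary \ref{cor tr coor}), so $\mc Y^{Bethe}(V;z)$ is the graph of a morphism over $\BC^q$ and $W^\dagger_{V,z}$ is a biregular isomorphism onto it, as claimed.
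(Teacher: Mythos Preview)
Your proposal is correct and follows precisely the second of the two routes the paper indicates: adapting Triangular Theorem \ref{5.6} to the general pair $(V;z)$ by working in the local variable $w=x-z$ and absorbing the fixed unit factors $u_i=1+\mc O(w)$ into the correction polynomials $\phi_{ij}$. The paper's own proof is just a pointer to this argument (and alternatively to Theorem \ref{thm 6.1}, from which injectivity follows once one observes that the unipotent normalization \Ref{4.4.1z} forces any lower-unipotent change of basis between two elements of $\mc N(V;z)$ with the same flag to be the identity).
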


\begin{proof}
The theorem is deduced from Theorem \ref{thm 6.1}, or it can be proved along the lines of the proof 
of Triangular Theorem \ref{5.6}, which 
corresponds to the subspace
$V = \BC[x]_{\leq r}$ of polynomials of degree $\leq r$ and $z = 0$.
\end{proof}

\begin{cor}
\label{cor actN}
The action of $N$ on $\mc N(V;z)$ and the Wronski map 
$W^\dagger_{V,z}$ induce an action of $N$ on the Bethe cell
$\mc Y^{Bethe}(V;z)$.

\end{cor}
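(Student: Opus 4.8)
The plan is to produce the $N$-action on $\mc Y^{Bethe}(V;z)$ by transport of structure along the isomorphism supplied by Theorem \ref{thm NYB}. Recall from the paragraph preceding the corollary that $N$ acts on $\mc N(V;z)$ on the left, freely and with a single orbit, through the linear action of $N \subset GL_{r+1}(\C)$ on bases regarded as $(r+1)$-column vectors. By Theorem \ref{thm NYB} the reduced Wronski map $\Wdz \colon \mc N(V;z) \to \mc Y^{Bethe}(V;z)$ is an isomorphism, hence a bijection admitting a two-sided inverse $\Wdz^{-1}$, and one simply pushes the action across it.

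Concretely, I would set, for $g \in N$ and $\by \in \mc Y^{Bethe}(V;z)$,
\be
g \cdot \by := \Wdz\bigl(g \cdot \Wdz^{-1}(\by)\bigr),
\ee
where $g \cdot (-)$ on the right denotes the $N$-action on $\mc N(V;z)$. The group-action axioms are then immediate: $1 \cdot \by = \Wdz(\Wdz^{-1}(\by)) = \by$, while $g \cdot (g' \cdot \by) = (gg') \cdot \by$ follows by cancelling $\Wdz^{-1}\Wdz = \id$ and using associativity of the action downstairs. By construction $\Wdz$ is then $N$-equivariant, so the transported action is again free and transitive; moreover it is algebraic, since the action on $\mc N(V;z)$ is a morphism $N \times \mc N(V;z) \to \mc N(V;z)$ and $\Wdz$ is an isomorphism of varieties.

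There is really no obstacle here: all the content has already been absorbed into Theorem \ref{thm NYB}, and the corollary is a purely formal transport of structure, the one essential input being that $\Wdz$ is a bijection --- indeed an isomorphism of varieties, which is what makes the induced action algebraic rather than merely set-theoretic. If one wanted a more explicit description, the natural refinement, exactly parallel to Theorem \ref{4.8} in the case $V = \C[x]_{\leq r}$ and $z = 0$, would be to identify the action of the generators $e_i(c) \in N$ with the generation maps \Ref{GEN} on the Bethe cell, by means of the W5 Identity together with the relation \Ref{gen pr}; but such a formula is not needed for the bare existence statement of the corollary.
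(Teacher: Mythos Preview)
Your argument is correct and is exactly the (implicit) reasoning the paper intends: the corollary is stated without proof immediately after Theorem \ref{thm NYB}, and the induced action is simply the transport of the $N$-action along the isomorphism $\Wdz$. Your closing remark anticipating the explicit description via $e_i(c)\leftrightarrow\nu_i(c)$ is also on target; this is precisely what the paper establishes next in Theorem \ref{thm compTg}.
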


\begin{lem}
\label{lem genB}
There exists a Zariski open subset $U\subset \mc Y^{Bethe}(V;z)$, such that any 
$\by\in U$ is generic and fertile.
\end{lem}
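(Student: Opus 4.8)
The plan is to reduce the statement to Lemma \ref{lem gen} by identifying $\mc Y^{Bethe}(V;z)$, after projectivization of its coordinates, with an open cell of the population $\mc Z_V$. Along the way, fertility will come essentially for free, and only genericity will require the open-set argument. First I would observe that fertility holds on \emph{all} of $\mc Y^{Bethe}(V;z)$, not merely on an open subset. Indeed, for a unipotent basis $\bb=(b_1,\dots,b_{r+1})\in\mc N(V;z)$ with $\by=W^\dagger_{V,z}(\bb)=(y_1,\dots,y_r)$, the coordinates $y_i$ differ from the unnormalized reduced Wronskians $\Wd(b_1,\dots,b_i)$ only by the scalars $T_1^{i-1}(z)\cdots T_{i-1}(z)$, which are nonzero because $z\notin\Si_V$. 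Hence formula \Ref{gen pr} furnishes a polynomial $\tty_i$ with $\Wr(y_i,\tty_i)=\on{const}\cdot T_i\,y_{i-1}\,y_{i+1}$, so every $\by\in\mc Y^{Bethe}(V;z)$ satisfies \Ref{gWe} and is fertile with respect to $T_1,\dots,T_r$.

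Next I would set up the comparison with $\mc Z_V$. Projectivizing coordinate-wise gives a map $\pi:\mc Y^{Bethe}(V;z)\to\PP(\C[x])^r$, $\by\mapsto(y_1:\dots:y_r)$; since $[\by]=[W^\dagger_V(\bb)]$ is, by Theorem \ref{thm 6.1}, the image of the complete flag $\on{span}(b_1)\subset\dots\subset\on{span}(b_1,\dots,b_{r+1})$ of $\bb$, the map $\pi$ lands in $\mc Z_V$. The normalization $y_i(z)=1$ makes $\pi$ injective: if $\by,\by'\in\mc Y^{Bethe}(V;z)$ have $y_i'=\lambda_i y_i$, evaluating at the regular point $z$ forces $\lambda_i=1$. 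Combined with Theorem \ref{thm NYB}, the composite $\mc N(V;z)\xrightarrow{W^\dagger_{V,z}}\mc Y^{Bethe}(V;z)\xrightarrow{\pi}\mc Z_V$ sends a unipotent basis to the point of $\mc Z_V\cong X_V$ given by its flag; as unipotent bases are precisely the bases adapted to flags transverse to the flag of $V$ determined by order of vanishing at $z$, this identifies the image $U_0:=\pi(\mc Y^{Bethe}(V;z))$ with the big (open) Bruhat cell of $\mc Z_V$. Thus $\pi:\mc Y^{Bethe}(V;z)\iso U_0$ is an isomorphism onto a Zariski-open subset of $\mc Z_V$.

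Finally I would invoke Lemma \ref{lem gen}, which provides a Zariski-open $U'\subset\mc Z_V$ consisting of generic tuples. Since the presence of multiple roots of $y_i$ and of common roots of $y_i$ and $T_i\,y_{i-1}\,y_{i+1}$ is unchanged under rescaling each $y_i$, genericity is a projective property, and I may therefore set $U:=\pi^{-1}(U'\cap U_0)$, a Zariski-open subset of $\mc Y^{Bethe}(V;z)$ on which every $\by$ is generic and (by the first step) fertile. To see $U$ is nonempty, note that $\mc Z_V\cong X_V$ is irreducible, and both $U'$ and $U_0$ are nonempty open, hence dense, so $U'\cap U_0\neq\varnothing$. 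The main obstacle is exactly the identification of the image of $\pi$ with the open cell of $\mc Z_V$: one must confirm that the unipotent-basis condition corresponds precisely to transversality with the flag of $V$ at $z$, so that $U_0$ really is open and dense and therefore meets the generic locus $U'$; everything else is formal, resting on the density of open subsets in the irreducible flag variety $X_V$.
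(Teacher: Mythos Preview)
Your proposal is correct and follows the same route as the paper, which simply says the lemma follows from Lemma \ref{lem gen}; you have spelled out the details of that reduction (fertility via \Ref{gen pr}, the projectivization $\pi:\mc Y^{Bethe}(V;z)\to\mc Z_V$ identifying the Bethe cell with the big Bruhat cell of $\mc Z_V\cong X_V$, and the irreducibility argument ensuring the generic locus meets this cell). The one point you flag as the main obstacle---that unipotent bases at $z$ correspond exactly to flags in the open Bruhat cell relative to the order-of-vanishing flag at $z$---is indeed the only nontrivial step, and your description of it is accurate.
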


\begin{proof}
The theorem follows from Lemma \ref{lem gen}.
\end{proof}

Recall that by Theorem \ref{thm mv gf}, if $\by=(y_1,\dots,y_r)$ is generic and fertile, then roots of 
these polynomials determine a critical point of a suitable master function.

\subsection{Normalized mutations and $\mc N$-$\mc Y$ correspondence}
\label{sec Nmut}

Let $\by=(y_1,\dots,y_r) \in \mc Y^{Bethe}(V;z)$ and $i=1,\dots,r$. Define the {\it normalized mutation } of $\by$ in the
$i$-th direction. 
Consider the differential equation
\bean
\label{nm i}
\Wr(y_i(x),\hat y_i(x)) \,=\,\frac{T_i(x)}{T_i(z)}\,y_{i-1}(x)\,y_{i+1}(x), 
\eean
with respect to $\hat y_i$ with initial condition
\bean
\label{ini con}
\hat y_i(z) = 0,\qquad \frac{d\hat y_i}{d x}(z)\, = \,1 .
\eean
It has the unique solution
\bean
\label{uni sol}
\hat y_i (x) \,=\, y_i(x)\int_{z}^x\frac {T_i(u) y_{i-1}(u) y_{i+1}(u)}{T_i(z) y_i(u)^2}\,du\,.
\eean
This solution is a polynomial, cf. equation \Ref{gen pr}.
For $c\in\C$, denote
\bean
\ty_i(c;x) = y_i(x) + c\tty_i(x)\,.
\label{3.1.1bn}
\eean
Notice that  $\ty_i(c;z)=1$.

Define a new $r$-tuple of polynomials
\bean
\label{niyn}
\nu_i(c)\by := (y_1(x), \ldots, y_{i-1}(x),\, \ty_i(c;x),\, y_{i+1}(x), \ldots,  y_r(x))\,.
\eean
We call it {\it the $i$-th normalized mutation} of the  tuple $\by\in \mc N(V;z)$. 
\vsk.2>

\begin{lem}
For any $i$, $c$ the tuple $\nu_i(c)\by$ lies in $\mc Y^{Bethe}(V;z)$.
\end{lem}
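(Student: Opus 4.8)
The plan is to realize each normalized mutation $\nu_i(c)$ as the image under the reduced Wronski map $W^\dagger_{V,z}$ of left multiplication by the elementary unipotent matrix $e_i(c) = 1 + c\,e_{i,i+1}$ of \Ref{4.1.0}; this is the $(V;z)$-version of Theorem \ref{4.8}. Since $\by \in \mc Y^{Bethe}(V;z)$, Theorem \ref{thm NYB} furnishes a unique unipotent basis $\bb = (b_1, \ldots, b_{r+1}) \in \mc N(V;z)$ with $\by = W^\dagger_{V,z}(\bb)$, and I would prove the intertwining relation
\bean
\label{intertw}
\nu_i(c)\,W^\dagger_{V,z}(\bb)\,=\,W^\dagger_{V,z}\bigl(e_i(c)\,\bb\bigr)\,.
\eean
Granting \Ref{intertw}, the Lemma is immediate: by Section \ref{sec bcaw} the group $N$ acts on $\mc N(V;z)$ (with a single free orbit), so $e_i(c)\,\bb \in \mc N(V;z)$, whence its image lies in $\mc Y^{Bethe}(V;z)$ by the defining equation \Ref{BcVz}. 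Equivalently, \Ref{intertw} identifies $\nu_i(c)$ with the induced $N$-action of Corollary \ref{cor actN}, which preserves the Bethe cell.

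To establish \Ref{intertw} I would compute the effect of $e_i(c)$ on $W^\dagger_{V,z}$ coordinatewise. Writing $\by = (y_1, \ldots, y_r)$ with $y_j = \Wdz(b_1, \ldots, b_j)$, the basis $e_i(c)\,\bb$ differs from $\bb$ only by replacing $b_i$ with $b_i + c\,b_{i+1}$. By multilinearity and skew-symmetry of the Wronskian, $y_j$ is unchanged for every $j \ne i$: for $j < i$ the entries $b_1, \ldots, b_j$ are untouched, and for $j > i$ the new contribution is a Wronskian in which $b_{i+1}$ occurs twice and so vanishes. Only the $i$-th coordinate changes, becoming
\bean
\label{iexp}
\Wdz(b_1, \ldots, b_{i-1}, b_i + c\,b_{i+1})\,=\,y_i + c\,\tty_i\,,\qquad \tty_i := \Wdz(b_1, \ldots, b_{i-1}, b_{i+1})\,,
\eean
cf. \Ref{tild yi}. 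Thus \Ref{intertw} reduces to checking that this $\tty_i$ satisfies the normalized mutation equation \Ref{nm i} with the initial conditions \Ref{ini con}.

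This verification is the crux, and the step I expect to demand the most care. Applying the W5 Identity (Proposition \ref{2.4.1}) to $A = \{1, \ldots, i\}$ and $B = \{1, \ldots, i-1, i+1\}$, so that $A \cap B = \{1, \ldots, i-1\}$ and $A \cup B = \{1, \ldots, i+1\}$, gives
\bea
\Wr\bigl(\Wr(b_1, \ldots, b_i),\, \Wr(b_1, \ldots, b_{i-1}, b_{i+1})\bigr)\,=\,\Wr(b_1, \ldots, b_{i-1})\cdot\Wr(b_1, \ldots, b_{i+1})\,.
\eea
Both $y_i$ and $\tty_i$ are $i$-fold reduced Wronskians, hence carry the common prefactor $P_i(z)/P_i(x)$ where $P_k(x) := \prod_{j=1}^{k-1} T_j^{k-j}(x)$; since $\Wr(f\rho, g\rho) = \Wr(f, g)\,\rho^2$ for any common factor $\rho$, restoring all prefactors and using the elementary identity $P_{i-1}(x)\,P_{i+1}(x) = P_i(x)^2\,T_i(x)$ makes them telescope to yield exactly $\Wr(y_i, \tty_i) = (T_i(x)/T_i(z))\,y_{i-1}\,y_{i+1}$, as in \Ref{nm i} (cf. \Ref{gen pr}). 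The conditions $\tty_i(z) = 0$, $\tty_i'(z) = 1$ of \Ref{ini con} follow from the unipotent normalization \Ref{4.4.1z} of $\bb$ together with the leading behavior $\Wr(b_1, \ldots, b_{i-1}, b_{i+1}) = (x - z) + \mc O((x-z)^2)$ near $z$, the $(V;z)$-analogue of Lemma \ref{4.6} read off from Example \ref{2.3.1}. By uniqueness of the polynomial solution \Ref{uni sol}, $\tty_i$ is that solution, establishing \Ref{intertw}; the argument is uniform in $i$ and $r$, as in Theorem \ref{4.8}.
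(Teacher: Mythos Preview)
Your proof is correct and follows the same route as the paper's, just unpacked in full detail. The paper's proof is the single line ``The lemma follows from formula \Ref{gen pr}'', and what you have written is precisely the content of that formula with the $z$-normalizations tracked: identifying $\tty_i$ with $\Wdz(b_1,\ldots,b_{i-1},b_{i+1})$ via the W5 Identity and checking the initial conditions \Ref{ini con} from the unipotent normalization \Ref{4.4.1z}. Your intertwining relation \Ref{intertw} is exactly the Comparison Theorem \ref{thm compTg}, which the paper states immediately after this lemma with the same one-line justification; so you have effectively combined the two proofs into one, which is entirely reasonable since they share the same core computation.
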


\begin{proof} 
The lemma follows from formula \Ref{gen pr}.
\end{proof}

By this lemma we have a map
\bea
\nu_i(c) : \mc Y^{Bethe}(V;z)\to \mc Y^{Bethe}(V;z)\,.
\eea

Recall the unipotent matrices $e_i(c)$, $i=1,\dots,r$, $c\in\C$, introduced in \Ref{4.1.0}.

\begin{thm}[Comparison Theorem]
\label{thm compTg}
Let $\bb \in \mc N(V;z)$, $i=1,\dots,r$, $c\in \C$. Then
\bean
\label{comp f}
W^\dagger_{V,z}(e_i(c)\bb) \,=\, 
\mu_i(c)W^\dagger_{V,z}(\bb) \,.
\eean

\end{thm}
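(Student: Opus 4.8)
The plan is to follow the proof of the Comparison Theorem \ref{4.8}, replacing the ordinary Wronskians by the reduced Wronskians $\Wdz$ and keeping careful track of the normalizing factors $T_j(z)/T_j(x)$. Write $W := W^\dagger_{V,z}$ and $\by = W(\bb) = (y_1,\dots,y_r)$, so that $y_k = \Wdz(b_1,\dots,b_k)$. First I would isolate which coordinates of $W(\bb)$ are moved by left multiplication by $e_i(c)$. Since $e_i(c)\bb$ replaces $b_i$ by $b_i + cb_{i+1}$ and fixes every other $b_j$ (see \Ref{4.1.0}), multilinearity and skew-symmetry of the reduced Wronskian give the following: $y_k$ is unchanged for $k<i$, since it involves neither $b_i$ nor $b_{i+1}$; for $k>i$ the new term $c\,\Wdz(\dots,b_{i+1},b_{i+1},\dots)$ has a repeated entry and vanishes, so $y_k$ is again unchanged; and for $k=i$ one gets $y_i\mapsto y_i + c\,\tty_i$, where $\tty_i := \Wdz(b_1,\dots,b_{i-1},b_{i+1})$. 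Thus $e_i(c)$ affects only the $i$-th coordinate, exactly as the normalized mutation $\nu_i(c)$ does in \Ref{niyn}.

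It then suffices to verify that this $\tty_i$ satisfies the mutation equation \Ref{nm i} together with the initial conditions \Ref{ini con}; by uniqueness it will then coincide with \Ref{uni sol}, and $y_i+c\tty_i=\ty_i(c;x)$ as in \Ref{3.1.1bn}. For the differential equation I would write $\Wdz(b_1,\dots,b_k) = \Wr(b_1,\dots,b_k)\,P_k$, where $P_k(x) = \prod_{j=1}^{k-1}\bigl(T_j(z)/T_j(x)\bigr)^{k-j}$ depends only on $k$. Using the elementary identity $\Wr(fP,gP) = P^2\,\Wr(f,g)$ together with the W5 Identity of Proposition \ref{2.4.1}, applied to $A=\{1,\dots,i\}$ and $B=\{1,\dots,i-1,i+1\}$ (so that $A\cap B=\{1,\dots,i-1\}$ and $A\cup B=\{1,\dots,i+1\}$), I obtain
$$
\Wr(y_i,\tty_i) \,=\, P_i^2\,\Wr(b_1,\dots,b_{i-1})\,\Wr(b_1,\dots,b_{i+1}) \,=\, \frac{P_i^2}{P_{i-1}P_{i+1}}\,y_{i-1}\,y_{i+1}\,.
$$
A short telescoping computation of the exponents of the $T_j$ shows that $P_i^2/(P_{i-1}P_{i+1}) = T_i(x)/T_i(z)$, which is precisely \Ref{nm i} with no surviving constant.

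For the initial conditions I would invoke that $\bb$ is unipotent at $z$, i.e. $b_j = (x-z)^{j-1}/(j-1)! + \mc O((x-z)^j)$ by \Ref{4.4.1z}. Since $P_i(z)=1$, the leading behaviour of $\tty_i$ near $z$ coincides with that of the Wronskian of the leading monomials $(x-z)^0/0!,\dots,(x-z)^{i-2}/(i-2)!,(x-z)^i/i!$. By the monomial Wronskian formula \Ref{5.8.1} (cf. Example \ref{2.3.1}) this equals $(x-z)+\mc O((x-z)^2)$, the leading coefficient reducing to $1$ once the Vandermonde and factorial factors cancel. Hence $\tty_i(z)=0$ and $\tfrac{d\tty_i}{dx}(z)=1$, matching \Ref{ini con}; uniqueness of \Ref{uni sol} then forces $\tty_i=\hat y_i$. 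Combined with the first paragraph this gives $W^\dagger_{V,z}(e_i(c)\bb) = \nu_i(c)\,W^\dagger_{V,z}(\bb)$, which is \Ref{comp f}.

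The main obstacle I anticipate is the bookkeeping of the normalizing factors: confirming that $P_i^2/(P_{i-1}P_{i+1})$ equals $T_i(x)/T_i(z)$ exactly, so that \Ref{nm i} carries constant $1$, and that the leading coefficient of $\tty_i$ at $z$ is exactly $1$, so that \Ref{ini con} holds on the nose. Both are finite explicit computations rather than conceptual difficulties, but they are precisely where an index shift or a misplaced power of $T_j$ would spoil the normalization, so I would carry them out with care; the case $r=3$ with $T_i\equiv 1$ and $z=0$ already worked out in Theorem \ref{4.8} serves as a sanity check.
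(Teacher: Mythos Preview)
Your proposal is correct and follows essentially the same route as the paper: the paper's one-line proof says ``follows from formula \Ref{gen pr}, cf.\ the proof of Theorem~\ref{4.8}'', and you have simply unpacked both ingredients, carrying out the W5 computation with the normalizing factors $P_k$ and checking the initial conditions at $z$ explicitly. The only minor difference is that you rederive the key identity $\Wr(y_i,\tty_i)=\tfrac{T_i(x)}{T_i(z)}y_{i-1}y_{i+1}$ from scratch via the telescoping of $P_i^2/(P_{i-1}P_{i+1})$, whereas the paper invokes the already-stated relation \Ref{gen pr} (up to a constant) and leaves the normalization implicit; your version is more self-contained and pins down the constant exactly, which is what is needed for \Ref{nm i}.
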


\begin{proof}
The proof follows from formula \Ref{gen pr}. Cf. the proof of Theorem \ref{4.8}.
\end{proof}

\begin{cor}
By Corollary \ref{cor actN} the group $N$ acts on the Bethe cell $\mc Y^{Bethe}(V;z)$.
By Theorem \ref{thm compTg} a unipotent matrix $e_i(c)$ acts 
on the Bethe cell $\mc Y^{Bethe}(V;z)$
as the normalized mutation
 $\mu_i(c)$.
\end{cor}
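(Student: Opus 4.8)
The plan is to read the two quoted results together; no new computation is needed, and the entire content is an unwinding of the definition of the transported action.

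First I would recall that, by Theorem~\ref{thm NYB}, the reduced Wronski map restricts to an isomorphism $W^\dagger_{V,z}\colon \mc N(V;z) \iso \mc Y^{Bethe}(V;z)$, and hence is invertible on the Bethe cell. Corollary~\ref{cor actN} says exactly that transporting the free $N$-action on $\mc N(V;z)$ along this isomorphism produces an $N$-action on $\mc Y^{Bethe}(V;z)$; by the definition of transport of structure this action is
\be
g\cdot \by \,=\, W^\dagger_{V,z}\bigl(g\,(W^\dagger_{V,z})^{-1}(\by)\bigr),\qquad g\in N,\ \by\in \mc Y^{Bethe}(V;z).
\ee
This is the first assertion, and it also fixes the precise meaning of ``the action of $e_i(c)$ on the Bethe cell''.

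Second I would specialize this formula to $g=e_i(c)$. Fix $i$ and $c$, take any $\by\in\mc Y^{Bethe}(V;z)$, and put $\bb:=(W^\dagger_{V,z})^{-1}(\by)\in\mc N(V;z)$, so that $\by=W^\dagger_{V,z}(\bb)$. The action formula then reads $e_i(c)\cdot\by = W^\dagger_{V,z}(e_i(c)\bb)$, and the Comparison Theorem~\ref{thm compTg} rewrites the right-hand side as $\mu_i(c)\,W^\dagger_{V,z}(\bb)=\mu_i(c)\by$. Thus $e_i(c)\cdot\by=\mu_i(c)\by$ for every $\by$; that is, $e_i(c)$ acts on $\mc Y^{Bethe}(V;z)$ precisely as the normalized mutation $\mu_i(c)$, which is the second assertion.

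There is no genuine obstacle here: both inputs, the existence of the transported $N$-action and the intertwining identity $W^\dagger_{V,z}(e_i(c)\bb)=\mu_i(c)W^\dagger_{V,z}(\bb)$, are already in hand, so the corollary is simply the two of them read together. The one formal point worth noting is that the claimed statement is an equality of self-maps of $\mc Y^{Bethe}(V;z)$, which is well posed because the Lemma immediately preceding this Corollary guarantees that $\mu_i(c)$ carries the Bethe cell into itself; the verification above is pointwise, which suffices since $W^\dagger_{V,z}$ is a bijection.
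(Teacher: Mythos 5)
Your proposal is correct and follows exactly the route the paper intends: the corollary has no independent content beyond reading Corollary~\ref{cor actN} (the transported $N$-action via the isomorphism of Theorem~\ref{thm NYB}) together with the intertwining identity of Comparison Theorem~\ref{thm compTg}, which is precisely your pointwise unwinding $e_i(c)\cdot\by = W^\dagger_{V,z}(e_i(c)\bb) = \mu_i(c)\by$. Your closing remark that the preceding Lemma guarantees $\mu_i(c)$ (written $\nu_i(c)$ in \Ref{niyn}; the paper's $\mu$/$\nu$ is a notational slip) maps the Bethe cell to itself is a welcome extra precision the paper leaves implicit.
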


\subsection{Positive populations}
Fix
any point $\bb^{(0)}\in \mc N(V;z)$, that is any basis of $V$ unipotent at $z$.
Then 
\bea
N \bb^{(0)} = \mc N(V,z)\,.
\eea
Define the {\it totally positive part} of $\mc N(V,z)$ as
\bea
\mc N_{>0}(V;z;\bb^{(0)})\,:=\, N_{>0} \bb^{(0)}\,.
\eea
The totally positive part depends on the choice of $\bb^{(0)}$.

\vsk.2>
Denote 
\bea
\by^{(0)} \,:=\, W^\dagger_{V,z} (\bb^{(0)})\,\in\, \CY^{Bethe}(V;z)\,.
\eea
Since $\bb^{(0)}$ is any point of $\mc N(V;z)$, the point 
$\by^{(0)}$ could be an any point of the Bethe cell
$\CY^{Bethe}(V;z)$.

\vsk.2>
Define the {\it totally positive Bethe subvariety} or {\it positive population}
as 
\bean
\label{tpBbb}
\CY^{Bethe}_{> 0}(V;z;\bb^{(0)}) := N_{>0} (\by^{(0)}))
\subset \mc Y^{Bethe}(V;z)\,.
\eean
We also have
\bean
\label{NB}
\CY^{Bethe}_{> 0}(V;z;\bb^{(0)}) = W^\dagger_{V,z}(\mc N_{>0}(V;z;\bb^{(0)})\,.
\eean

\subsection{Coordinates on the Bethe cell}

Let $\bb^{(0)}$ be a point of the Bethe cell  $\mc Y^{Bethe}(V;z)$.
Let $\bh = s_{i_q}\dots s_{i_1} \in \Red(w_0)$ be a reduced decomposition of the longest element $w_0\in S_{r+1}$\,.
\vsk.2>

We call the map 
\bea
\nu_\bh\,
:\ \BC^q \lra  \CY(V; z)^{Bethe},
\qquad  (c_q,\dots,c_1)\mapsto \nu_{i_q}(c_q)\dots\nu_{i_1}(c_1)\by^{(0)}\,,
\eea
the {\it Wronskian chart} corresponding to $\bh$. Its image 
is  Zariski open. The map $\nu_\bh$ is a birational isomorphism.

\vsk.2>
For any two reduced  words $\bh, \bh\rq{}\in\Red(w_0)$ we have
the transition function $\nu_{\bh\rq{}}^{-1}\circ \nu_\bh$, which defines an automorphism
\bean
\label{1,1,2n}
\mc R_{\bh, \bh'}:\ \F \iso  \F\,
\eean
of the field
$\F :=  \BC(c_1, \ldots, c_q)$\,.

\vsk.2>
Recall the Whitney-Lusztig charts on $N$, see \Ref{4.1.3},
\bea
\CL_\bh:\ \BC^q \lra N\,
\eea
and transition function automorphisms
\bea
R_{\bh, \bh'}:\ \F \iso  \F\,,
\eea
defined for any two words $\bh,\bh\rq{}\in\Red(w_0)$, see Section \ref{1.2}.

\begin{thm}
For any $\bh,\bh\rq{}\in\Red(w_0)$ we have
\bean
\mc R_{\bh, \bh'}\,=\, R_{\bh, \bh'}\,.
\eean
\end{thm}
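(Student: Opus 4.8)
The plan is to reduce the statement to the single observation that both transition maps coincide with the Lusztig birational self-map $\CL_{\bh'}^{-1}\circ\CL_\bh$ of $\BC^q$; once this is established the equality $\mc R_{\bh,\bh'}=R_{\bh,\bh'}$ is immediate. The engine of the argument is the Comparison Theorem \ref{thm compTg}, which says that $W^\dagger_{V,z}$ intertwines left multiplication by $e_i(c)$ on $\mc N(V;z)$ with the normalized mutation $\nu_i(c)$ on $\CY^{Bethe}(V;z)$.

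First I would iterate \Ref{comp f}. Fix the basepoint $\bb^{(0)}\in\mc N(V;z)$ and set $\by^{(0)}=W^\dagger_{V,z}(\bb^{(0)})$. For $\bh=s_{i_q}\dots s_{i_1}$ and $\bc=(c_q,\dots,c_1)$, applying \Ref{comp f} once for each factor moves every unipotent $e_{i_j}(c_j)$ across the Wronski map and converts it into the mutation $\nu_{i_j}(c_j)$:
\bea
\nu_\bh(\bc)
&=& \nu_{i_q}(c_q)\dots\nu_{i_1}(c_1)\,W^\dagger_{V,z}(\bb^{(0)})
\\
&=& W^\dagger_{V,z}\big(e_{i_q}(c_q)\dots e_{i_1}(c_1)\,\bb^{(0)}\big)
\\
&=& W^\dagger_{V,z}\big(\CL_\bh(\bc)\,\bb^{(0)}\big)\,.
\eea
Introducing the map $\Psi\colon N\to\CY^{Bethe}(V;z)$, $g\mapsto W^\dagger_{V,z}(g\,\bb^{(0)})$, this reads $\nu_\bh=\Psi\circ\CL_\bh$. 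The decisive point is that $\Psi$ is a fixed isomorphism that does \emph{not} depend on the word $\bh$: it is the composition of the orbit isomorphism $N\iso\mc N(V;z)$, $g\mapsto g\bb^{(0)}$ (the action of $N$ on $\mc N(V;z)$ is free with a single orbit), with the isomorphism $W^\dagger_{V,z}\colon\mc N(V;z)\iso\CY^{Bethe}(V;z)$ of Theorem \ref{thm NYB}.

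Second, I would compute the Bethe transition map straight from this factorization. Since $\Psi$ is word-independent it cancels, and, viewing the charts as birational isomorphisms onto the cell, we obtain
\bea
\mc R_{\bh,\bh'}
&=& \nu_{\bh'}^{-1}\circ\nu_\bh
\,=\, (\Psi\circ\CL_{\bh'})^{-1}\circ(\Psi\circ\CL_\bh)
\\
&=& \CL_{\bh'}^{-1}\circ\Psi^{-1}\circ\Psi\circ\CL_\bh
\,=\, \CL_{\bh'}^{-1}\circ\CL_\bh\,,
\eea
as birational self-maps of $\BC^q$, i.e. as automorphisms of $\F=\BC(c_1,\dots,c_q)$. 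On the other hand, by its very definition the Lusztig transition map satisfies $\ba'=R_{\bh,\bh'}(\ba)$ exactly when $\CL_\bh(\ba)=\CL_{\bh'}(\ba')$, which says $R_{\bh,\bh'}=\CL_{\bh'}^{-1}\circ\CL_\bh$. Comparing the two displays gives $\mc R_{\bh,\bh'}=R_{\bh,\bh'}$.

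The derivation is short, so the only genuine care goes into conventions rather than into any real obstacle. I would check that the direction of the induced field automorphism matches on both sides; this is automatic, since both $\mc R_{\bh,\bh'}$ and $R_{\bh,\bh'}$ are formed in the same way as $(\text{second chart})^{-1}\circ(\text{first chart})$, so the pullback/pushforward conventions are identical. I would also invoke that $\nu_\bh$ is birational (as already asserted for the Wronskian charts) so that $\nu_{\bh'}^{-1}\circ\nu_\bh$ is a well-defined element of $\Aut(\F)$. A useful sanity check to record is that the basepoint dependence disappears: although each chart $\nu_\bh$ depends on the choice of $\bb^{(0)}$ through $\by^{(0)}$, the common factor $\Psi$ carrying that dependence cancels, consistently with $R_{\bh,\bh'}$ being intrinsic to $N$. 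For $r=2$ this recovers the involutive change of coordinates \Ref{5.1.3}, which matches \Ref{1.1.2a}, giving an explicit confirmation.
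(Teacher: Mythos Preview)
Your argument is correct and is precisely the natural elaboration of the paper's one-line proof, which reads in full: ``The theorem follows from Comparison Theorem \ref{thm compTg}.'' You have simply unpacked this sentence by iterating \Ref{comp f} to obtain $\nu_\bh=\Psi\circ\CL_\bh$ with $\Psi$ the word-independent isomorphism $g\mapsto W^\dagger_{V,z}(g\,\bb^{(0)})$, and then cancelling $\Psi$.
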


\begin{proof}
The theorem follows from Comparison Theorem \ref{thm compTg}.
\end{proof}

\begin{cor}
\label{cor posp}
For any $\bh,\bh\rq{}\in\Red(w_0)$, the map 
 $\nu_{\bh\rq{}}\circ \nu_\bh^{-1}$ is well defined on
the positive population $\CY^{Bethe}_{> 0}(V;z;\bb^{(0)})$
and defines an isomorphism
\bea
\nu_{\bh\rq{}}\circ \nu_\bh^{-1} \,:\,
\CY^{Bethe}_{> 0}(V;z;\bb^{(0)})\,\to\, \CY^{Bethe}_{> 0}(V;z;\bb^{(0)})\,.
\eea

\end{cor}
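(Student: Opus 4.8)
The plan is to transport the statement to the group $N$ through the isomorphism furnished by the reduced Wronski map and to apply Whitney's Theorem \ref{thm 1.1} there. First I would set $\Psi := W^\dagger_{V,z}\colon \mc N(V;z)\iso \CY^{Bethe}(V;z)$, the isomorphism of Theorem \ref{thm NYB}. Since $N$ acts freely on $\mc N(V;z)$ with a single orbit, the orbit map $g\mapsto g\,\bb^{(0)}$ is an isomorphism $N\iso \mc N(V;z)$, and composing with $\Psi$ gives an isomorphism
$$
N\ \iso\ \CY^{Bethe}(V;z),\qquad g\ \longmapsto\ g\cdot\by^{(0)} := \Psi(g\,\bb^{(0)}),
$$
which is the action of Corollary \ref{cor actN}. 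Under it $e_i(c)$ acts as the normalized mutation $\nu_i(c)$ by the Comparison Theorem \ref{thm compTg}. As $\CL_\bh(\bc) = e_{i_q}(c_q)\cdots e_{i_1}(c_1)$, iterating the Comparison Theorem one mutation at a time yields, for every $\bc\in\BC^q$,
$$
\nu_\bh(\bc) = \nu_{i_q}(c_q)\cdots\nu_{i_1}(c_1)\by^{(0)} = \CL_\bh(\bc)\cdot\by^{(0)} .
$$
Thus the Wronskian chart $\nu_\bh$ is the composite of the Whitney-Lusztig chart $\CL_\bh\colon\BC^q\to N$ with the orbit isomorphism $g\mapsto g\cdot\by^{(0)}$, and likewise for $\bh'$; this is what faithfully transports positivity between the two sides.

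Next I would check that $\nu_\bh^{-1}$ is everywhere defined on the positive population. By definition \Ref{tpBbb} we have $\CY^{Bethe}_{>0}(V;z;\bb^{(0)}) = N_{>0}\cdot\by^{(0)}$, so any $\by$ in it equals $g\cdot\by^{(0)}$ for a unique $g\in N_{>0}$, uniqueness holding because the action is free. By Whitney's Theorem \ref{thm 1.1}, together with the fact that $\CL_\bh$ restricts to an isomorphism $\R_{>0}^q\iso N_{>0}$, there is a unique $\ba\in\R_{>0}^q$ with $g = \CL_\bh(\ba)$; hence $\by = \CL_\bh(\ba)\cdot\by^{(0)} = \nu_\bh(\ba)$, so $\nu_\bh^{-1}(\by) = \ba\in\R_{>0}^q$ is well defined. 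Equivalently, $\nu_\bh$ restricts to an isomorphism $\R_{>0}^q\iso\CY^{Bethe}_{>0}(V;z;\bb^{(0)})$.

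It then remains to run the same computation for $\bh'$. For the point $\ba\in\R_{>0}^q$ produced above, positivity of $\CL_{\bh'}(\ba)$ follows once more from Theorem \ref{thm 1.1}, since $\CL_{\bh'}$ carries $\R_{>0}^q$ into $N_{>0}$; therefore
$$
\nu_{\bh'}\big(\nu_\bh^{-1}(\by)\big) = \nu_{\bh'}(\ba) = \CL_{\bh'}(\ba)\cdot\by^{(0)}\ \in\ N_{>0}\cdot\by^{(0)} = \CY^{Bethe}_{>0}(V;z;\bb^{(0)}).
$$
Hence $\nu_{\bh'}\circ\nu_\bh^{-1}$ is everywhere defined on the positive population and sends it to itself. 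Since both $\nu_\bh$ and $\nu_{\bh'}$ restrict to isomorphisms $\R_{>0}^q\iso\CY^{Bethe}_{>0}(V;z;\bb^{(0)})$, the composite factors as
$$
\CY^{Bethe}_{>0}(V;z;\bb^{(0)})\ \overset{\nu_\bh^{-1}}{\iso}\ \R_{>0}^q\ \overset{\nu_{\bh'}}{\iso}\ \CY^{Bethe}_{>0}(V;z;\bb^{(0)}),
$$
a composition of isomorphisms, hence is itself an isomorphism, its inverse being $\nu_\bh\circ\nu_{\bh'}^{-1}$ by symmetry.

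The one delicate point, which I expect to be the main obstacle, is the gap between ``birational'' and ``everywhere defined'': a priori $\nu_{\bh'}\circ\nu_\bh^{-1}$ is only a birational self-map of the Bethe cell, carrying an indeterminacy locus, and the entire content is that this locus is avoided over the positive part. The force of Whitney's Theorem \ref{thm 1.1} is used precisely here, guaranteeing that $\CL_\bh$ and $\CL_{\bh'}$ are honest, not merely birational, isomorphisms between $\R_{>0}^q$ and $N_{>0}$, so that the transition can be carried out on genuine points of the positive population without ever meeting the inaccessible locus. I would take care to note that all the identifications above are identifications of isomorphisms, so the conclusion is an isomorphism and not merely a bijection.
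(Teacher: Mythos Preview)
Your proposal is correct and follows essentially the same approach as the paper, whose proof simply reads ``The corollary follows from Theorem \ref{thm 1.1} and Comparison Theorem \ref{thm compTg}.'' You have faithfully unpacked what that sentence means: transport the Wronskian charts to Whitney--Lusztig charts via the Comparison Theorem, then invoke Whitney's theorem to see that each $\CL_\bh$ restricts to a genuine isomorphism $\R_{>0}^q\iso N_{>0}$, so the birational transition becomes everywhere defined on the positive part.
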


\begin{proof}
The corollary follows from Theorem \ref{thm 1.1}
and Comparison Theorem \ref{thm compTg}.
\end{proof}

\section{Base affine space and  fat populations}
\label{7}

\subsection{Base affine space} 

Let $N, N_-\subset G := \on{SL}_{r+1}(\BC)$ be the subgroups of the upper and lower 
triangular matrices with $1$'s on the diagonal. 
Let $T\subset G$ be the subgroup of diagonal matrices.
Let $B_- = N_-T$ be the subgroup of lower triangular matrices and $B = NT $ the subgroup of upper triangular matrices

\vsk.2>
The quotient $G/N_-$   is called the {\it base affine space} of $G$. It is fibered over the flag space 
$G/B_-$ with fiber $T$. 

\vsk.2>
The image $\CB$ of $B$ in $G/N_-$ is called the {\it big cell} of the base affine space $G/N_-$\,.

\subsection{Fat population}
Let $V$ be an $r+1$-dimensional vector space as in Section \ref{sec poavs}.
\vsk.2>

The vector space $V$ has a {\it  volume form}. The {\it volume} of a basis $\bb=(b_1,\dots,b_{r+1})$ of $V$ is
defined to be the number
\bean
\label{volume}
\Wd (b_1,\dots,b_{r+1})\,.
\eean
Denote by  $B_V$ the set of all bases of $V$ of volume $1$. We consider every basis as an $r+1$-column vector.
Then the group $\on{SL}_{r+1}$ acts on $ B_V$ on the left freely with one orbit. The quotient
$B_V/N_{-}$ is isomorphic to the base affine space of $\on{SL}_{r+1}$.

\vsk.2>
The reduced Wronski map $W_V^\dagger$ defined in \Ref{Wd} induces a map
\bean
\label{wfat}
W_V^\dagger\,:\, B_V/N_-\,\to\, \C[x]^r\,.
\eean

\begin{thm}
\label{thm Wfat}
The reduced Wronski map $W_V^\dagger\,:\, B_V/N_-\,\to\, \C[x]^r\,$ is an embedding. The image of the map, denoted by 
$\on{Fat}(\mc Z_V)$, is isomorphic to the affine base space $G/N_-$  of the group $\on{SL}_{r+1}$.

\end{thm}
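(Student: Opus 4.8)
The plan is to reduce everything to the flag-variety embedding of Theorem~\ref{thm 6.1} together with the torus-bundle structure of the base affine space. Write $G=\on{SL}_{r+1}(\C)$ and let $T\subset G$ be the diagonal torus, so that $B_V\cong G$ as a left $G$-torsor, $B_V/N_-\cong G/N_-$, and the projection $\pi\colon B_V/N_-\to X_V=G/B_-$ is a $T$-bundle. The components of the map are $y_i=\Wd(b_1,\dots,b_i)$ (with $y_1=b_1$). First I would check that these descend to $B_V/N_-$: for $n\in N_-$ one has $(n\bb)_i=b_i+\sum_{j<i}n_{ij}b_j$, so the change of basis from $(b_1,\dots,b_i)$ to $((n\bb)_1,\dots,(n\bb)_i)$ is lower unitriangular, and since $\Wd$ is $\Wr$ times a fixed function of $x$ only, multilinearity and antisymmetry of the Wronskian give $y_i(n\bb)=y_i(\bb)$. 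Hence $W^\dagger_V$ is well defined on $B_V/N_-$, and projectivizing yields a commutative square $\PP\circ W^\dagger_V=\psi\circ\pi$, where $\psi\colon X_V\hookrightarrow\PP(\C[x])^r$ is the embedding of Theorem~\ref{thm 6.1} with image $\mc Z_V$.

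Next I would prove injectivity. If $W^\dagger_V(\bb)=W^\dagger_V(\bb')$, then projectivizing and using that $\psi$ is injective gives $\pi(\bb)=\pi(\bb')$; thus $\bb,\bb'$ span the same flag and $\bb'=g\bb$ for some $g$ in the lower Borel $B_-=N_-T$. Writing $g=nt$ with $n\in N_-$ and $t=\diag(t_1,\dots,t_{r+1})\in T$, the $N_-$-invariance already established reduces to the case $\bb'=t\bb$, and since $(t\bb)_i=t_ib_i$ multilinearity gives $y_i(t\bb)=(t_1\cdots t_i)\,y_i(\bb)$. As each $y_i$ is a nonzero polynomial, equality of all components forces $t_1\cdots t_i=1$ for $1\le i\le r$; together with $\det t=1$ this gives $t=e$, so $\bb=\bb'$ in $B_V/N_-$.

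I would then identify the image as the full preimage $p^{-1}(\mc Z_V)$, where $p\colon(\C[x]\setminus\{0\})^r\to\PP(\C[x])^r$ is the coordinatewise projectivization. The inclusion $W^\dagger_V(B_V/N_-)\subseteq p^{-1}(\mc Z_V)$ is immediate from the square. For the reverse inclusion, given $(y_1,\dots,y_r)$ with $([y_1],\dots,[y_r])\in\mc Z_V$, let $F=\psi^{-1}([y_\bullet])$ and choose any volume-$1$ basis $\bb_0$ adapted to $F$; then $W^\dagger_V(\bb_0)=(\kappa_1y_1,\dots,\kappa_ry_r)$ with $\kappa_i\in\C^*$. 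The map $T\to(\C^*)^r$, $t\mapsto(t_1,t_1t_2,\dots,t_1\cdots t_r)$, is an isomorphism (its inverse recovers the $t_i$ as successive ratios and sets $t_{r+1}=(t_1\cdots t_r)^{-1}$), so there is a unique $t\in T$ with $t_1\cdots t_i=\kappa_i^{-1}$; as $\det t=1$ the basis $t\bb_0$ still has volume $1$, and $W^\dagger_V(t\bb_0)=(y_1,\dots,y_r)$. Thus $W^\dagger_V$ is a bijective morphism onto $p^{-1}(\mc Z_V)$, which is a $(\C^*)^r$-bundle over the smooth variety $\mc Z_V\cong G/B_-$, hence smooth and in particular normal.

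Finally, to upgrade this bijection to an embedding I would invoke Zariski's Main Theorem: a bijective morphism of complex varieties is birational onto its image, and a birational bijective morphism onto a normal target is an isomorphism; hence $W^\dagger_V$ is an isomorphism onto $\on{Fat}(\mc Z_V):=p^{-1}(\mc Z_V)$. Since the source is $B_V/N_-\cong G/N_-$, the image is isomorphic to the base affine space, which is the assertion; intrinsically $\on{Fat}(\mc Z_V)$ is the complement of the zero sections in the sum of the line bundles pulled back from the $r$ factors $\PP(\C[x])$ under $\psi$, and these are the fundamental-weight bundles on $G/B_-$ whose associated $(\C^*)^r$-bundle is exactly $G/N_-$. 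I expect the main obstacle to be the image computation of the third paragraph, specifically verifying that the residual torus realizes \emph{all} independent rescalings of the $r$ nonconstant components while preserving the volume-$1$ normalization (this is where $T\subset\on{SL}_{r+1}$, i.e.\ $\det t=1$, is essential); the remaining upgrade to an isomorphism is then formal via Zariski's Main Theorem.
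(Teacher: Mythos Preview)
Your argument is correct. The paper's own proof is a single sentence pointing to \cite{MV}: ``The proof is parallel to the proof of Theorem~\ref{thm 6.1} in \cite{MV}.'' So the intended route is to rerun the Mukhin--Varchenko argument for the flag variety in the fat setting. You take a genuinely different route: you accept Theorem~\ref{thm 6.1} as a black box and deduce the fat statement by exhibiting $W^\dagger_V$ as a $T$-equivariant lift of the embedding $\psi$ along the torus bundle $B_V/N_-\to X_V$. The trade-off is clear: the paper's approach would give an independent direct proof (at the cost of reproducing \cite{MV}), while yours is shorter and more structural, isolating exactly the new content --- namely that the character map $T\to(\C^\times)^r$, $t\mapsto(t_1,\,t_1t_2,\dots,\,t_1\cdots t_r)$, is an isomorphism, which simultaneously yields injectivity on fibers and surjectivity onto $p^{-1}(\mc Z_V)$. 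Your identification of the image as the $(\C^\times)^r$-bundle over $\mc Z_V$ is exactly what the paper records immediately after the theorem, and the final appeal to Zariski's Main Theorem (bijective morphism onto a normal target in characteristic~$0$) is the clean way to promote the bijection to an isomorphism of varieties. One small remark: when you speak of $p^{-1}(\mc Z_V)\subset(\C[x]\setminus\{0\})^r$, it is worth noting that the degrees of the $y_i$ are bounded (since $V$ is finite-dimensional), so everything takes place in a finite-dimensional ambient space and the algebro-geometric statements make sense.
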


The image $\on{Fat}(\mc Z_V)$ will be called the {\it fat population} associated with $V$.

\begin{proof}
The proof is  parallel to the proof of Theorem \ref{thm 6.1} in \cite{MV}.
\end{proof}

We have another description of the fat population as a bundle 
\bean
\label{bundl}
\on{Fat}(\mc Z_V)\,\to\, \mc Z_V
\eean
over the population $\mc Z_V$, defined in Theorem
\ref{thm 6.1}, with fiber isomorphic to $(\C^\times)^r$. Namely, if
$\by= (y_1 :\dots : y_r) \in \mc Z_V \subset \Bbb P(\C[x])^r$ and
$(y_1 ,\dots , y_r) \in \C[x]^r$ is a representative of $\by$, then the fiber over $\by$
consist of the following $r$-tuples of polynomials
\bea
\{ (d_1y_1 ,\dots , d_ry_r) \in \C[x]^r\ | \ (d_1,\dots,d_r)\in (\C^\times)^r\}\,.
\eea

\subsection{Fat Bethe cell}

Let $\Si_V=\{z_1,\dots,z_n\}\subset \C$ be  the set of singular points of $V$.  Fix a  complex number
 $z\notin \Si_V$, a regular point of $V$. Let $\mc Y^{Bethe}(V;z) \subset \C[x]^r$ be the Bethe cell defined in 
\Ref{BcVz}. Define the {\it fat Bethe cell} $\on{Fat}(\mc Y^{Bethe}(V;z))$ by the formula
\bean
\label{FBC}
&&
\on{Fat}(\mc Y^{Bethe}(V;z)) =
\\
\notag
&&
\phantom{aaa}
=\{(d_1y_1 ,\dots , d_ry_r) \in \C[x]^r\ | \ 
(y_1 ,\dots , y_r)\in \mc Y^{Bethe}(V;z)\,,\ (d_1,\dots,d_r)\in (\C^\times)^r\}\,.
\eean

\vsk.2>
Recall  $\mc N(V;z)$,  the cell of bases of $V$ unipotent at $z$. Define the fat cell 
$\on{Fat}(\mc N(V;z))$ by the formula
\bean
\label{FUC}
&&
\on{Fat}(\mc N(V;z)) =\{(b_1d_1 , b_2d_2/d_1,\dots , b_rd_r/d_{r-1}, b_{r+1}/d_r) \in \C[x]^r\ |
\\
\notag
&&
\phantom{aaaaaaaaaaaaaaaaa}
 | \ 
(b_1 ,\dots , b_{r+1})\in \mc N(V;z)\,,\ (d_1,\dots,d_r)\in (\C^\times)^r\}\,.
\eean

\vsk.2>
Clearly the fat cell is isomorphic to the big cell $\mc B$ of the base affine space $G/N_-$.

\begin{thm}
The reduced Wronski map induces an
isomorphism
\bean
\label{FF}
W_V^\dagger\,:\, \on{Fat}(\mc N(V;z)) \,\to\,\on{Fat}(\mc Y^{Bethe}(V;z))\,.
\eean
Hence the fat Bathe cell $\on{Fat}(\mc Y^{Bethe}(V;z))$ is isomorphic to 
 the big cell $\mc B$ of the base affine space $G/N_-$.
\end{thm}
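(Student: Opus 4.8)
The plan is to reduce the assertion to the isomorphism $W^\dagger_{V,z}\colon \mc N(V;z)\iso\mc Y^{Bethe}(V;z)$ already established in Theorem \ref{thm NYB}, by checking that the reduced Wronski map carries the $(\C^\times)^r$-fattening of the source, defined in \Ref{FUC}, onto the $(\C^\times)^r$-fattening of the target, defined in \Ref{FBC}, compatibly with that base isomorphism. Concretely, I would present both fat cells as trivial $(\C^\times)^r$-bundles, over $\mc N(V;z)$ and over $\mc Y^{Bethe}(V;z)$ respectively, and then show that $W_V^\dagger$ is a morphism of bundles that induces $W^\dagger_{V,z}$ (up to a fixed automorphism of $(\C^\times)^r$) on the base.

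The key step is to record how $W^\dagger_V$ behaves under the diagonal rescaling in \Ref{FUC}. A point of $\on{Fat}(\mc N(V;z))$ is the basis $\bb'=(b_1d_1,\,b_2d_2/d_1,\dots,b_rd_r/d_{r-1},\,b_{r+1}/d_r)$ obtained from $\bb=(b_1,\dots,b_{r+1})\in\mc N(V;z)$ by the torus element $\diag(d_1,d_2/d_1,\dots,1/d_r)\in\on{SL}_{r+1}$. Since the reduced Wronskian is multilinear (Section \ref{2.3}), the telescoping of the scaling factors gives, for $1\le i\le r$ and with the convention $d_0=1$,
\[
\Wd(b_1d_1,\dots,b_id_i/d_{i-1})=\Big(\prod_{j=1}^i d_j/d_{j-1}\Big)\,\Wd(b_1,\dots,b_i)=d_i\,\Wd(b_1,\dots,b_i),
\]
so that $W_V^\dagger(\bb')=(d_1y_1,\dots,d_ry_r)$ with $(y_1,\dots,y_r)=W_V^\dagger(\bb)$. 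Using the relation $\Wdz(b_1,\dots,b_i)=\kappa_i\,\Wd(b_1,\dots,b_i)$ with the nonzero constants $\kappa_i:=T_1^{i-1}(z)T_2^{i-2}(z)\cdots T_{i-1}(z)$ (nonzero because $z\notin\Si_V$), this equals $(e_1\tilde y_1,\dots,e_r\tilde y_r)$ where $e_i=d_i/\kappa_i\in\C^\times$ and $(\tilde y_1,\dots,\tilde y_r)=W^\dagger_{V,z}(\bb)\in\mc Y^{Bethe}(V;z)$. By \Ref{FBC} this lies in $\on{Fat}(\mc Y^{Bethe}(V;z))$, and every element of the target arises this way as $\bb$ and the $d_i$ vary, by Theorem \ref{thm NYB}.

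It then remains to invert the map, and here the normalisation $y_i(z)=1$ valid on $\mc Y^{Bethe}(V;z)$ (Section \ref{sec bcaw}) does the work: given $(g_1,\dots,g_r)\in\on{Fat}(\mc Y^{Bethe}(V;z))$, evaluation at $z$ recovers the torus factor $e_i=g_i(z)$, hence $\tilde y_i=g_i/e_i$, whereupon $\bb=(W^\dagger_{V,z})^{-1}(\tilde y_1,\dots,\tilde y_r)$ and $d_i=\kappa_i e_i$ reconstruct the fat basis $\bb'$. All of these operations are morphisms, the nontrivial one being the inverse of $W^\dagger_{V,z}$, a morphism by Theorem \ref{thm NYB}; this shows that $W^\dagger_V\colon\on{Fat}(\mc N(V;z))\to\on{Fat}(\mc Y^{Bethe}(V;z))$ is an isomorphism of varieties. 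The final assertion follows at once, since $\on{Fat}(\mc N(V;z))$ is identified with the big cell $\mc B\subset G/N_-$ as noted just before the theorem (it is the $B$-orbit of a fixed unipotent basis, and $B$ acts freely with $B\cap N_-=\{e\}$). The one point requiring care is matching the precise diagonal weights $d_i/d_{i-1}$ of the fattening \Ref{FUC} against the telescoping of the reduced Wronskians, which is exactly what makes the $i$-th coordinate scale by the single factor $d_i$; everything else is formal bookkeeping layered on top of Theorems \ref{thm NYB} and \ref{thm Wfat}.
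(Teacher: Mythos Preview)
Your proposal is correct and follows the same approach as the paper, which simply states that the theorem is a corollary of Theorem \ref{thm NYB}. You have accurately unpacked the content of that corollary: the telescoping of the diagonal weights in \Ref{FUC} makes $W_V^\dagger$ a morphism of trivial $(\C^\times)^r$-bundles over the base isomorphism $W^\dagger_{V,z}$, and evaluation at $z$ inverts the torus factor.
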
 

\begin{proof}
This theorem is a corollary of Theorem \ref{thm NYB}.
\end{proof}

\subsection{Example of a cluster structure on a fat Bethe cell}
\label{7.4}  

Consider the example of the 3-dimensional vector space $V = \BC[x]_{\leq 2}$
of quadratic polynomials. In this case the set $\Si_V$ of singular points
of $V$ is empty.
We choose $z=0$, a regular point for $V$, and consider the corresponding
fat Bethe cell $\on{Fat}(\mc Y^{Bethe}(V;z))$. It consist of pairs of polynomials
\bean
\label{alb}
(\alpha_0 + \alpha_1x + \alpha_2x^2/2,  \beta_0 + \beta_1x + \beta_2x^2/2)
\eean
such that
\bea
\alpha_0 \ne 0,\qquad \beta_0\ne 0
\eea
and such that the Pl\"ucker equation holds,
$$
\alpha_1\beta_1 = \alpha_0\beta_2 + \alpha_2\beta_0\,.
$$
This is a familiar relation in the cluster algebra structure of type $A_1$ on the ring 
$\BC[\on{SL}_3/N_-]$, where the cluster 
variables are $\alpha_1, \beta_1$, cf. \cite[Section 3.1]{Z} and \cite{FZ}.

\vsk.2>

In fact, in this case the coefficients of the polynomials in
\Ref{alb} are nothing else 
but the Pl\"ucker coordinates on $\BC[\on{SL}_3/N_-]$.

\section{Appendix: Fourteen and Eightfold Ways}
\label{8}
\subsection{Group $\on{SL}_3$ and a 2-category} 

One can reformulate the Whitney-Lusztig data  from Section \ref{1.2} in the language of \cite{MS}. 

Namely, consider a {\it $2$-category}\footnote{for a definition of (globular) $n$-categories  see \cite{St}} 
$\CS_3$ whose objects are in bijection with $S_3$; the $1$-arrows correspond 
to the {\it weak Bruhat order} on this group: there are $6$ elementary arrows:
\bean
(123) \overset{\tau_{12}}\lra (213)  \overset{\tau_{23}}\lra (231)  
\overset{\tau'_{12}}\lra (321)
\label{8.1.1a}
\eean
and 
\bean
(123) \overset{\tau'_{23}}\lra (132)  \overset{\tau''_{12}}\lra (312)  
\overset{\tau''_{23}}\lra (321).
\label{8.1.1b}
\eean
Finally, there is one nontrivial $2$-arrow between two compositions
\bean
h: \tau'_{12}\tau_{23}\tau_{12} \lra \tau''_{23}\tau''_{12}\tau'_{23}\,.
\label{8.1.2}
\eean

This structure is conveniently visualized in a hexagon, to be denoted $\CP_3$: its $6$ vertices correspond to objects 
of $\CS_3$, $6$ oriented edges - to elementary $1$-morphisms $\tau$, 
and the unique $2$ cell - to the $2$-morphism $h$.

\vsk.2>

The Whitney-Lusztig  data described above may be called a {\it $2$-representation} $\rho$ of $\CS_3$: 
to each object we assign the same vector space $V_{(ijk)} = \Bbb F^3$ with a fixed standard base. To the elementary
arrows $\tau^{(\prime, \prime \prime)}_{ij}$ we assign  elementary matrices:
\bean
\rho(\tau_{12}) = e_1(a_1),\qquad \rho(\tau_{23}) = e_2(a_2),
\label{8.1.3}
\eean
etc.  To products of elementary arrows we assign the products of the corresponding matrices. 

Finally, $\rho(h)$ will be an automorphism of $\Bbb F$ given by 
\bean
\rho(h) = R_{121; 212} =: R\,,
\label{8.1.4}
\eean
see formulas \Ref{1.1.2a} and \Ref{1.1.2b}.
Note that 
\bean
R^2 = \Id_{\Bbb F},
\label{8.1.5}
\eean
see Section \ref{1.2}.

Thus, the matrix $\rho(\tau''_{23}\tau''_{12}\tau'_{23})$ is obtained from the matrix 
$\rho(\tau'_{12}\tau_{23}\tau_{12})$ by applying the field automorphism $\rho(h)$:
\bean
\rho(\tau''_{23}\tau''_{12}\tau'_{23}) = \rho(h)\bigl\{\rho(\tau'_{12}\tau_{23}\tau_{12})
\bigr\}.
\label{8.1.6}
\eean

Similarly, for any $r$ one defines in \cite{MS} an {\it $r$-category} $\CS_r$,
 whose $1$-coskeleton is a usual $1$-category corresponding to the symmetric group $S_{r+1}$ 
with the weak Bruhat order.  Informally speaking, $\CS_r$ is an $r$-category structure 
on the $r+1$-th {\it permutohedron} $\CP_{r+1}$, the $r$-dimensional polyhedron 
in $\BR^{r+1}$, the convex hull of a generic $S_{r+1}$-orbit of a point $\bx\in \BR^{r+1}$.  

In particular for each $w\in S_{r+1}$ we have an $r-1$-category $\CH om_{\CS_r}(e, w)$; its $1$-skeleton 
$$
\CH om^{\leq 1}_{\CS_r}(e, w) :=
\on{Sk}_1\CH om_{\CS_r}(e, w)
$$
is a usual ($1$-)category; the set of its objects is by definition the set 
$\overline{\Red(w)}$ which is obtained from $\Red(w)$ by identifying any two words $\bh$ and $\bh'$ if their 
only distinction  is a couple $ij$ in $\bh$ vs $ji$ in $\bh'$ somewhere in the middle, 
with $|i - j| > 1$. 

According to \cite{MS}, $\overline{\Red(w)}$  is equipped with a partial order, {\it the $2$nd Bruhat order} 
$\leq_2$. 
The category $\CH om^{\leq 1}_{\CS_r}(e, w) $ corresponds to this order, i.e. 
two words $\bh, \bh'\in \overline{\Red(w)}$ are connected by a unique arrow 
if and only if $\bh \leq_2 \bh'$.

\subsection{Group $\on{SL}_4$ and a tetrahedron equation}

\subsubsection{The Forteenfold Way}\footnote{Cf. \cite{GN}.}
\label{8.2}
 Consider the case $r = 3$. The $3$-category $\CS_4$ is related to a 
polyhedron $\CP_4$ which looks as follows. We take a regular octahedron and cut from it $6$ little square pyramids at its vertices; we get 
a polyhedron with $6\times 4 = 24$ vertices. It has $8$ hexagons and $6$ squares 
as its $2$-faces. 

The vertices of $\CP_4$ are in bijection with $S_4$. An  oriented edge $x \to y$ connects elements  
$x, y\in S_4$ such that $x\leq y$ and $\ell(y) = \ell(x) + 1$ where $\leq$ is the weak Bruhat order, and $\ell(x)$ is the usual 
length (the number of factors in a reduced decomposition). 

Its eight hexagonal $2$-faces correspond to identities  of the form
\bean
s_is_{i+1}s_i = s_{i+1}s_{i}s_{i+1}\,,
\label{8.2.1a}
\eean
which give rise to eight "elementary"\  $2$-morphisms of type \Ref{1.1.2a}.

Its six square edges correspond to identities of the form
\bean
s_is_j = s_js_i,\qquad |i - j| > 1.
\label{8.2.1b}
\eean
which give rise to the {\it identity} $2$-morphisms in $\mc S_4$. 

\vsk.2>
It is convenient to imagine the vertices $e$ and $w_0$ as the "North"\ and "South"\ poles 
of $\CP_4$. The set $\Red(w_0)$ consists of  $16$ longest paths on $\CP_4$, of length $6$, going downstairs, which connect $e$ and $w_0$. 

For example, the path $\ell_1$ is
$$
(1234) \overset{\tau_1}\lra (2134) \overset{\tau_2}\lra (2314) \overset{\tau'_1}\lra (3214) \overset{\tau_3}\lra (3241) \overset{\tau'_2}\lra (3421) \overset{\tau''_1}\lra (4321),  
$$
whereas the path $\ell_5$ is
$$
(1234) \overset{\tau'_3}\lra (1243) \overset{\tau''_2}\lra (1423) \overset{\tau''_3}\lra (1432) \overset{\tau'''_1}\lra (4132) \overset{\tau'''_2}\lra (4312) \overset{\tau'''_3}\lra (4321).  
$$

Fourteen  paths are depicted  below, it is a {\it $14$-fold Way}: 
\bean
\begin{matrix} 
 212321 & \lra & 213231 = 231231 =  231213 & \lra & 232123 \\ 
\uparrow & & & &  \downarrow\\
121321 & & & & 323123\\
|| & & & &  || \\
123121 & & & & 321323\\
\downarrow & & & & \uparrow \\
123212 & \lra & 132312 = 132132 =  312132 & \lra &  321232   
\end{matrix}\ \ .
\label{8.2.2}
\eean
The elementary paths which are equal as $1$-morphisms are connected by the signs $ = $.
They correspond to $6$ mutations of type \Ref{8.2.1b} and are in bijection with the square 
$2$-faces of $\CP_4$. If we identify the paths related by $=$, we are left with the set 
$$
Hom^{(1)}(e, w_0) = \overline{\Red(w_0)} \,,
$$
which contains $8$ elements. 

\vsk.2>
Let us number the elements of $\overline{\Red(w_0)}$ as $\ell_c,\ c\in \BZ/8\BZ$. 
These elements are connected by $8$ mutations of type 
\Ref{8.2.1a}, which are geometrically given by hexagons:
\bean
\begin{matrix} \ell_0 & \overset{h_1(1)}\lra & \ell_1 & \overset{h_2(3)}\lra & \ell_2 & \overset{h_1(3)}\lra & \ell_3 & \overset{h_2(1)}\lra & \ell_4\\
|| & & & & & & & & || \\
\ell_0 & \overset{h_1(4)}\lra & \ell_{-1} & \overset{h_2(2)}\lra & \ell_{-2} & \overset{h_1(2)}\lra & \ell_{-3} & \overset{h_2(4)}\lra & \ell_{-4}
\end{matrix}\ \  .
\label{8.2.3}
\eean
Thus, the $2$-nd Bruhat order on the set $\overline{\Red(w_0)}$  converts it to an octagon.

\vsk.2>
Finally we have one $3$-morphism (homotopy)
$$
k: \ h_2(1)h_1(3)h_2(3)h_1(1) \lra h_2(4)h_1(2)h_2(2)h_1(4)
$$
which corresponds to the single $3$-cell of $\CP_4$, its body. 

\subsubsection{Representation of the $3$-category}

\label{8.3}

The Whitney - Lusztig data give rise to a {\it $2$-representation} of the $3$-category 
$\CS_4$.  It is visualized on the permutohedron $\CP_4$ as follows. 

\vsk.2>
Our base field will be a purely transcendental extension of $\BC$,  
$$
\Bbb F = \BC(\ba) = \BC(a_1, a_2, a_3, a_4, a_5, a_6) \cong \BC(N_4).
$$
At each vertex we put the based $\Bbb F$-vector space $V =\Bbb F^6$. At the edges we put the elementary matrices 
$e_i(a_j) \in N_3(\Bbb F),\ 1\leq i \leq 3, 1\leq j \leq 6$. For example, at 
the  $3$ edges going down from $e$ we put 
the matrices $e_1(a_1), e_2(a_1), e_3(a_1)$, and to the last three edges coming to 
$w_0$ we put the matrices $e_1(a_6), e_2(a_6), e_3(a_6)$. 
To any path we assign the product of the corresponding matrices. 

For example
$$
\rho(\ell_1) = e_1(a_6)e_2(a_5)e_3(a_4)e_1(a_3)e_2(a_2)e_1(a_1),
$$
whereas
$$ 
\rho(\ell_2) = e_3(a_6)e_2(a_5)e_1(a_4)e_3(a_3)e_2(a_2)e_3(a_1),
$$

On $2$-faces of $\CP_4$ we put certain automorphisms of the base field $\Bbb F$. 

Namely, let us introduce involutive operators 
$L(i): \Bbb F\iso \Bbb F,\ 1\leq i \leq 5$, 
by 
\bean
L(i)(a_i)
& = &
a_{i+1},\qquad L(i)(a_{i+1}) = a_{i}, 
\label{8.3.1}
\\
\notag
L(i)(a_k) 
&=& a_{k}\qquad\ \ \ \text{ if}\ k \neq i, i+1, 
\eean
and $R(j):\ \Bbb F\iso \Bbb F, \  1\leq j\leq 4$, given by
\bean
\label{8.3.2}
R(j)(a_j) 
&=&
 a_{j+1}a_{j+2}/(a_j + a_{j+2}),
\\
\notag  R(j)(a_{j+1}) &=& a_{j} + a_{j+2},\ 
\\
\notag
R(j)(a_{j+2}) 
&=& a_{j}a_{j+1}/(a_j + a_{j+2}),
\\
\notag
R(j)(a_k) &=& a_{k}\quad \text{ if}\ k \neq i, i+1, 
\eean
cf. formula \Ref{1.1.2a}. 

On  eight hexagons (resp. on six squares) we put the involutions 
$R$ (resp. $L$) according to the picture:
\bean
\label{8.3.3}
&&
{}
\\
&&
\begin{matrix} 
 \Bbb F(212321) & \overset{R(3)}\lra & \Bbb F(213231) \overset{L(2)}\lra \Bbb F(231231) 
 \overset{L(5)}\lra  \Bbb F(231213) & 
 \overset{R(3)}\lra & \Bbb F(232123) \\ 
R(1)\uparrow & & & &  \downarrow R(1)\\
\Bbb F(121321) & & & & \Bbb F(323123)\\
L(3)\downarrow & & & & \downarrow L(3) \\
\Bbb F(123121) & & & & \Bbb F(321323)\\
R(4)\downarrow & & & & \uparrow R(4) \\
\Bbb F(123212) & \overset{R(2)}\lra & \Bbb F(132312) \overset{L(4)}\lra\Bbb F(132132) 
\overset{L(1)}\lra  \Bbb F(312132) & 
\overset{R(2)}\lra & \Bbb F(321232)   
\end{matrix} \ \ .
\notag
\eean
Here $\Bbb F(\bh), \bh\in \Red(w_0)$, is a copy of the field $\Bbb F$. 

\vsk.2>
This way we have associated to any path $\ell = \ell(\bh)$,  $\bh\in \Red(w_0)$, 
an upper triangular matrix $\rho(\ell)\in N(\Bbb F)$, and to every homotopy
$\ell\overset{h}\lra \ell'$
an automorphism 
$$
R(h):\ \Bbb F\iso \Bbb F
$$
such that    
\bean
\rho(\ell') = R(h)\{\rho(\ell)\}
\label{8.3.4}
\eean

\smallskip
\begin{thm}
\label{8.4}
The diagram \Ref{8.3.3} is commutative, i.e.,
$$
L(3)R(1)R(3)L(2)L(5)R(3)R(1) = R(4)R(2)L(1)L(4)R(2)R(4)L(3)\,.
$$
\end{thm}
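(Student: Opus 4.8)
The plan is to recognize both sides of the identity as composites of \emph{Lusztig transition maps} between the reduced-word coordinate charts on $N=N_4$, and then to reduce the equality to the fact that such a transition map depends only on its source and target word, not on the chain of elementary moves joining them. Recall from Section~\ref{1.2} that for any two words $\bh,\bh'$ the automorphism $R_{\bh,\bh'}$ of $\F$ is \emph{defined} by the matrix identity $N_\bh(\ba)=N_{\bh'}(R_{\bh,\bh'}(\ba))$. Because this condition refers only to $\bh$ and $\bh'$, the maps satisfy the cocycle relation
\[
R_{\bh,\bh''}=R_{\bh',\bh''}\circ R_{\bh,\bh'},
\]
obtained by composing the defining equalities $N_\bh=N_{\bh'}=N_{\bh''}$ transitively. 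This is exactly the coherence already used implicitly for the hexagon in the $\on{SL}_3$ case, cf. \Ref{8.1.6}.

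First I would check that each elementary operator appearing in \Ref{8.3.3} is precisely one of these transition maps for a single move. A commutation move $s_is_j=s_js_i$ with $|i-j|>1$ merely interchanges the two parameters attached to the swapped letters, which is the transposition $L(i)$ of \Ref{8.3.1}; a braid move $s_is_{i+1}s_i=s_{i+1}s_is_{i+1}$ acts on the three parameters attached to the braided letters by the formula \Ref{1.1.2a}, which is $R(j)$ of \Ref{8.3.2}. Thus every arrow $\Theta$ of the octagon realizes $\rho(\ell')=\Theta\{\rho(\ell)\}$ for adjacent words $\ell,\ell'$, i.e.\ $\Theta=R_{\ell,\ell'}$. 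Next I would verify that the two composites share their endpoints: reading each side right-to-left, and using that $L(2),L(5)$ (resp.\ $L(1),L(4)$) act on disjoint pairs of variables and hence commute, both $L(3)R(1)R(3)L(2)L(5)R(3)R(1)$ and $R(4)R(2)L(1)L(4)R(2)R(4)L(3)$ carry the word $(121321)$ to the word $(321323)$, travelling along the upper and lower halves of \Ref{8.3.3} respectively. By the cocycle relation each composite collapses to the single map $R_{(121321),(321323)}$, so the two sides coincide.

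The one point that genuinely requires care — and the main obstacle — is the bookkeeping in the identification step: one must confirm that the labelling of the six parameters $a_1,\dots,a_6$ along the edges of $\CP_4$ is arranged so that every elementary arrow acts on exactly the parameter block prescribed by \Ref{8.3.1} and \Ref{8.3.2}, with no index shift concealed in the identifications marked $=$ and $||$. Once this is confirmed the result is automatic from the matrix interpretation, and no computation beyond \Ref{1.1.2a} is needed; the involutivity $R^2=L^2=\Id$ then guarantees consistency of the orientations along the two halves. Alternatively one could bypass the conceptual argument and simply evaluate both composites as explicit rational self-maps of $(a_1,\dots,a_6)$ and compare them, a finite if tedious check; but the coherence argument is cleaner and exhibits the identity as the single tetrahedron, i.e.\ the unique $3$-cell coherence of the $2$-representation $\rho$ of $\CS_4$.
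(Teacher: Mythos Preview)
Your proposal is correct and is precisely the argument the paper has in mind: the paper does not spell out a proof of Theorem~\ref{8.4}, but the preceding setup, culminating in \Ref{8.3.4}, makes the cocycle identity $R_{\bh,\bh''}=R_{\bh',\bh''}\circ R_{\bh,\bh'}$ the intended reason, exactly as you describe. Your remark that the $L(2)L(5)$ versus $L(5)L(2)$ and $L(1)L(4)$ versus $L(4)L(1)$ discrepancies between the diagram and the displayed identity are harmless because the two transpositions act on disjoint pairs of variables is the one bookkeeping point worth making explicit, and you have done so.
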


\smallskip

\subsubsection{Eightfold Way}
\label{8.5}

 We can rewrite this assertion as follows.

Define $8$ automorphisms (involutions or compositions of two involutions):
\bea
&&
R_1(1) = R(1),\qquad\ \ \ \  \ \ R_2(1) = L(3)R(1),\ 
\\
&&
R_1(2) = L(4)R(2),\qquad 
R_2(2) = R(2)L(1),
\\
&&
R_1(3) = L(5)R(3),\qquad R_2(3) = R(3)\bL(2),\ 
\\
&&
R_1(4) = R(4)L(3),\qquad R_2(4)= R(4).
\eea
Then we have a {\it tetrahedron equation}
\bean
R_2(1)R_2(3)R_1(3)R_1(1) = R_2(4)R_2(2)R_1(2)R_1(4)\,.
\label{8.5.1}
\eean

\bigskip

\end{document}